\documentclass[final,12pt]{colt2022} 


\title[Nonparametric Estimation of Interacting Particle Systems]{Mean-Field Nonparametric Estimation of Interacting Particle Systems}
\usepackage{times}
\usepackage{comment}
\usepackage{amssymb}
\usepackage{amsmath}
\usepackage{enumitem}



 \coltauthor{\Name{Rentian Yao} \Email{rentian2@illinois.edu}\\
  \Name{Xiaohui Chen} \Email{xhchen@illinois.edu}\\
  \Name{Yun Yang} \Email{yy84@illinois.edu}\\
  \addr University of Illinois at Urbana-Champaign}

\newcommand{\norm}[1]{{\left\vert\kern-0.25ex\left\vert #1 
    \right\vert\kern-0.25ex\right\vert}}
\newcommand{\wht}{\widehat}

\newcommand{\dd}{{\rm d}}

\newcommand{\argmax}{\mathop{\rm argmax~}}
\newcommand{\barP}{\overline{\mb P}^N}
\newcommand{\Lip}{\textrm{Lip}}
\newcommand{\ri}{\textrm{(i)}}
\newcommand{\rii}{\textrm{(ii)}}

\def\mb{\mathbb}
\def\m{\mathcal}

\newtheorem{assumption}{Assumption}

\begin{document}

\maketitle

\begin{abstract}%
  This paper concerns the nonparametric estimation problem of the distribution-state dependent drift vector field in an interacting $N$-particle system. Observing single-trajectory data for each particle, we derive the mean-field rate of convergence for the maximum likelihood estimator (MLE), which depends on both Gaussian complexity and Rademacher complexity of the function class. In particular, when the function class contains $d$-variate $\alpha$-H{\"o}lder smooth  functions, our rate of convergence is minimax optimal on the order of $N^{-\frac{\alpha}{d+2\alpha}}$. Combining with a Fourier analytical deconvolution argument, we derive the consistency of MLE for the external force and interaction kernel in the McKean-Vlasov equation.
\end{abstract}

\begin{keywords}%
  interacting particle system, maximum likelihood estimation, Mckean-Vlasov equation, mean-field regime, learning interaction kernel.%
\end{keywords}

\section{Introduction}
Recent years have seen increasing research interest and progress in learning dynamical pattern of a large interacting particle system (IPS). Motivating applications on modeling collective behaviors come from statistical physics~\citep{PhysRevLett.96.104302}, mathematical biology~\citep{Mogilner:1999aa,Topaz:2006aa}, social science~\citep{MotschTadmor2014}, stochastic control~\citep{BuckdahnLiMa2017}, mean-field games~\citep{CarmonaDelarue2018_meanfieldgamsI}, and more recently computational statistics on high-dimensional sampling~\citep{NIPS2017_17ed8abe,LuLuNolen2019} and machine learning for neural networks~\citep{MeiE7665,MeiMisiakiewiczMontanari2019_colt,ChizatBach2018_nips,SIRIGNANO20201820,SirignanoSpiliopoulos2020}. Due to the large number of particles with interactions, such dynamical systems are high-dimensional and often non-linear. In this paper, we consider a general interacting $N$-particle system described by the stochastic differential equations (SDEs)
\begin{align}\label{eqn: interact_system}
\dd X_t^i = b^*(t, \mu_t^N, X_t^i)\,\dd t + \sigma^*(t, X_t^i) \,\dd W_t^i,\quad 1\leq i\leq N,
\end{align}
where $(W_t^1)_{t\geq0},\dots,(W_t^N)_{t\geq0}$ are independent Brownian motions on the $d$-dimensional Euclidean space $\mb R^d$, $\mu_t^N = N^{-1} \sum_{i=1}^N \delta_{X_t^i}$ is the empirical law of the interacting particles, and the initialization $X^1_0,\dots,X^N_0$ are i.i.d.~$\mb R^d$-valued random variables with a common law $\mu_0$, independent of $(W_t^i)_{t\geq0}$. 
Here in the non-linear diffusion process~\eqref{eqn: interact_system}, letting $\m P(\mb R^d)$ be the space of all probability measures on $\mb R^d$, the vector field $b^* : \mathbb{R}_+ \times \mathcal{P}(\mb R^d) \times \mb R^d \to \mathbb{R}^d$ is a distribution-state dependent drift vector field to be estimated and $\sigma^*$ is a known diffusion function (or volatility coefficient) quantifying the magnitude of the self-energy of the particle.
For simplicity, we focus on systems with time-homogeneous and space-(one-)periodic drift vector field $b^*(t,\nu,x) =: b^*(\nu,x)$ satisfying $b^*(\nu,x+m) = b^*(\nu,x)$ for every $m\in\mb Z^d$, and constant diffusion function $\sigma^*(t, x) \equiv 1$. The periodic model effectively confines the SDEs to a compact state space as the $d$-dimensional torus $\mb T^d$, and is commonly adopted in the SDE analysis to avoid boundary issues~\citep{van2016gaussian,pokern2013posterior,nickl2020nonparametric}.
Suppose that we observe continuous-time single-trajectory data for each particle $\mathcal{X}_T = \{(X_t^1, \dots, X_t^N):0\leq t\leq T\}$ in a finite time horizon $T > 0$. Our goal is to derive a statistically valid procedure to estimate the vector field $b^*$ in a large IPS based on the data $\mathcal{X}_T$.

\subsection{System governed by external-interaction force}
In the periodic setting, the values of the process $(X_t^i)$ modulo $\mb Z^d$ contain all relevant
statistical information, so we can identify the law of $(X_t^i)$ with a uniquely defined probability measure on $\mb T^d$ (cf.~Section 2.2 in~\cite{nickl2020nonparametric} for further details).
Under such identification, one important class of IPS with a time-homogeneous drift vector field can be represented as
\begin{align}\label{eqn:external-interaction_force}
    b^\ast(\nu, x) = \int_{\mb T^d}\tilde{b}^\ast(x, y)\,\dd\nu(y),\quad\mbox{with}\quad
    \tilde{b}^\ast(x,y) = G^\ast(x) + F^\ast(x-y),
\end{align}
for $\nu \in \mathcal{P}(\mb T^d)$ and continuous $F^\ast, G^\ast : \mb T^d \to \mb R^d$. In this case, one can interpret $G^\ast$ as an external force to the global system characterizing the drift tendency of particles and $F^\ast$ as an interaction kernel between particles. Then the IPS in~\eqref{eqn: interact_system} can be reformulated as
\vspace{-0.5em}
\begin{align*}
\dd X_t^i = G^\ast(X_t^i) \,\dd t + \frac{1}{N}\sum_{j=1}^N F^\ast(X_t^i - X_t^j) \, \dd t + \dd W_t^i.\\[-5ex]
\end{align*}
In statistical mechanics, microscopic behaviors of $N$ random particles are usually related to explain some observed macroscopic physical quantities (e.g., temperature distributions) in the sense that the evolution of the empirical law $\mu_t^N$ of the particles converges to a non-random mean-field limit $\mu_t$ as $N \to \infty$ and the probability measure flow $\mu_t(x) := \mu(x,t)$ solves the \emph{McKean-Vlasov equation}~\citep{McKean1966}
\begin{equation}
    \label{eqn:mckean-vlasov_eqn}
    \partial_t \mu = \Delta \mu + \mbox{div}\bigg(\mu \, \bigg[G^\ast + \int_{{\mb T}^d} F^\ast(\cdot - y) \mu_t(\dd y) \bigg] \bigg),
\end{equation}
which is a non-linear parabolic partial differential equation (PDE). For this special class of IPS, a further goal is to study the identifiability of $(F^\ast,G^\ast)$ and consistency of the derived estimators.

\subsection{Related work}

It is a classical result that $N$-particle interacting system (\ref{eqn: interact_system}) admits a unique strong solution, when both $b^*$ and $\sigma^*$ are Lipschitz continuous and the solution converges to its mean-field limit McKean-Vlasov stochastic differential equation (MVSDE) both in pathwise and weakly under the same Lipschitz condition~\citep{carmona2016lectures,CarmonaDelarue2018_meanfieldgamsI}. The latter is usually known as propagation of chaos~\citep{Sznitman1991}. Another inspiring work from \cite{lacker2018strong} showed that the convergence can be proved in a much stronger topology ($\tau$-topology), when volatility coefficient $\sigma^*$ involves no interaction term. 

Several works about learning the interaction kernel of interacting particle system have be done lately. \cite{bongini2017inferring} proposed an estimator by minimizing the discrete error functional, whose convergence rate is usually no faster than $N^{-1/d}$. This reflects the phenomenon of curse-of-dimension. \cite{lu2019nonparametric} constructed the least square estimator for interaction kernel, which enjoys an optimal rate of convergence under mild conditions. These two works were done under a noiseless setting, i.e., the system evolves according to an ordinary differential equation and initial conditions of agents are i.i.d. As for the stochastic system, \cite{li2021identifiability} studied the learnability (identifiability) of interaction kernel by maximum likelihood estimator (MLE) under the coercivity condition, and \cite{lang2021identifiability} provided a complete characterization of learnability. \cite{della2021nonparametric} investigated a nonparametric estimation of the drift coefficient, and the interaction kernel can be separated by applying Fourier transform for deconvolution. The convergence result is provided under a fixed time horizon, meaning that time $T$ is fixed in their asymptotic result. Another nonparametric estimation algorithm based on least squares was proposed by \cite{lang2020learning}. 

Estimating parameters of interacting systems by maximum likelihood can date back to 1990. \cite{kasonga1990maximum} proved the consistency and asymptotic normality of MLE for linear parametrized interacting systems. As for MVSDE, \cite{wen2016maximum} discussed the consistency of MLE in a broad class of MVSDE, based on a single trajectory $(x_t)_{0\leq t\leq T}$. \cite{liu2020parameter} extended it to path-dependent case with non-Lipschitz coefficients. Both of these works focused on the asymptotic behaviour when $T\to\infty$. \cite{sharrock2021parameter} studied the case with $N$ realizations of MVSDE, and the case of $N$ interacting particle systems, under which consistency of MLE was proved when $N\to\infty$ and an online parameter estimation method was also discussed. \cite{chen2021maximum} showed that MLE has optimal rate of convergence in mean-field limit and long-time dynamics, when assuming linear interactions and no external force.

\vspace{-1em}
\subsection{Our contributions}
We provide a rigorous non-asymptotic analysis of MLE of drift coefficient restricted on a general class of functions with certain smoothness condition. \cite{della2021nonparametric} proposed a kernel based estimation procedure for the same estimation problem. However, unlike our method, the behaviour of estimation based on kernel method rely heavily on tuning the bandwidth and their analysis does not involve uniform laws of dependent variables. Moreover, the MLE framework provides a unified and principled strategy that naturally incorporates finer structures such as~\eqref{eqn:external-interaction_force} in modelling the drift vector field $b^\ast$. In comparison, the kernel method requires further specialized steps for separating interaction force $F^\ast$ from the external force $G^\ast$ after the estimation of $b^\ast$. As a consequence, we do not need to explicitly specify the deconvolution operator ($\m L$ in Assumption~\ref{assump: non-stationary}) and only need to assume its existence in our consistency analysis, making the MLE approach more robust to changes in problem characteristics and less sensitive to parameter tuning. 

In our study, there are several obstacles while analyzing the MLE, some of which make our analysis technically more involved than that for the kernel method. Firstly, observations in $\m X_T$ are not i.i.d.~because of interaction among particles from the drift $b^\ast(\mu_t^N,\cdot)$. To decouple the dependence, we follow~\cite{della2021nonparametric} by using Girsanov's theorem to construct a new measure, under which the trajectory of particles becomes i.i.d. However, this change of measure will introduce some additional decoupling errors in our analysis of the MLE that is not present in the analysis of the kernel method~\citep{della2021nonparametric}. Dealing with these decoupling errors requires substantial efforts and is technically highly non-trivial.
Secondly, we derive a new and specialized maximal inequality (cf.~Lemma~\ref{lem: order_of_ito}) for handling the supreme of an unbounded process involving the It\^o integral that appears in our analysis. The derived maximal inequality is general and interesting in its own right, and can be applied to other problems involving diffusion processes beyond our current setting.
Thirdly, a standard union bound argument cannot be applied to deal with the decoupling error (see the discussion after equation~\eqref{eqn: new_basic_ineq} for a precise meaning) in terms of
the supreme of a random process expressed as the average of correlated It\^o integrals that naturally appears when analyzing the MLE. To address this issue, we develop a concentration inequality for U-statistics involving It\^o integrals (cf.~Lemma~\ref{lem: U_statistics}), which is then combined with chaining and leads to a new maximal inequality for U-processes (cf.~Lemma~\ref{lem: decoupling_err_Ito}). This refined maximal inequality helps us derive a better rate in our problem than using existing general versions of the inequality.

\subsection{Notation}
Let $\mb Z$ ($\mb N$) denote the set of all (non-negative) integers.
For any arbitrary functions $f: \mb T^d\to \mb R^d$, the Fourier series $(f)_k$ of $f$ is defined as
\begin{align*}
    (f_i)_k = \int_{\mb T^d}f_i(x) e^{-2\pi ik\cdot x}\,\dd x,\quad 1\leq i\leq d, \,k\in\mb Z^d,
\end{align*} 
where we let $f = (f_1,\cdots, f_d)^T$ and $(f)_k = ((f_1)_k, \cdots, (f_d)_k)^T$ are $d$-dimensional column vectors. Properties of Fourier analysis on torus can be found in Chapter 3 of \cite{grafakos2008classical}. 

 For $k = (k_1, \cdots, k_d)^T\in\mb Z^d$, let $|k| = k_1 + \cdots + k_d$ be the sum of all elements of $k$, and $D^k = \partial_{k_1\cdots k_d}$ is a $|k|$-th order partial derivative. We use $\norm{\cdot}$ for $l_2$-norm of a vector, and $\norm{\cdot}_2$ for $L^2(\mb T^d)$-norm of a (vector-valued) function, i.e., $\norm{f}_2^2 = \int_{\mb T^d}\norm{f(x)}^2\,\dd x.$ For a Lipschitz function $f$, we denote $\|f\|_{\text{Lip}}$ is the smallest constant $C > 0$ such that $\|f(x)-f(y)\| \leq \|x-y\|$ for all $x, y \in \mb T^d$.
Let $\norm{\cdot}_{H^1}$ be the Sobolev norm defined as $\norm{f}_{H^1}^2 = \norm{f}_2^2 + \sum_{i=1}^d \norm{\nabla f_i}_2^2.$
In addition, for a function $b(\nu, x)$, we define seminorms $\norm{b}_E^2 := \int_0^T\!\!\!\int_{\mb T^d}\norm{b(\mu_t, x)}^2\,\dd\mu_t(x)\dd t,$
and $\norm{b}_X^2 := N^{-1} \sum_{i=1}^N\int_0^T\norm{b(\mu_t, X_t^i)}^2\,\dd t$, and let the norm $|\!|\tilde{\cdot}|\!|_\Lip$ of any $b(\nu, \cdot) = \int_{\mb T^d}\tilde{b}(\cdot, y)\,\dd\nu(y)$ be $|\!|\tilde{b}|\!|_\Lip$.

For $0 < \beta < \infty$, let $\psi_{\beta}$ be the function on $[0,\infty)$ defined by $\psi_{\beta} (x) = e^{x^{\beta}}-1$, and for a real-valued random variable $\xi$,  define $\| \xi \|_{\psi_\beta}=\inf \{ C>0: \mathbb{E}[ \psi_{\beta}( | \xi | /C)] \leq 1\}$.
For $\beta \in [1,\infty)$, $\|\cdot\|_{\psi_{\beta}}$ is an \emph{Orlicz norm}, while for $\beta \in (0,1)$, $\| \cdot \|_{\psi_{\beta}}$ is not a norm but a quasi-norm, i.e., there exists a constant $C_{\beta}$ depending only on $\beta$ such that $\| \xi_{1} + \xi_{2} \|_{\psi_{\beta}} \leq C_{\beta} ( \| \xi_{1} \|_{\psi_{\beta}} + \| \xi_{2} \|_{\psi_{\beta}})$. Indeed, there is a norm equivalent to $\| \cdot \|_{\psi_{\beta}}$ obtained by linearizing $\psi_{\beta}$ in a neighborhood of the origin; cf. Lemma C.2 in~\cite{chen2019randomized}. 

For a function class $\m H$, define the shifted class $\m H^* := \m H - h^*$ for some $h^* \in \m H$. The function class $\m H^*$ is \emph{star-shaped} (or equivalently $\m H$ is star-shaped around $h^*$) if for any $h \in \m H$ and $\alpha \in [0,1]$, the function $\alpha h \in \m H^*$; cf. Chapter 13 of~\cite{wainwright2019high}. We use $N(\varepsilon, \m H, \|\cdot\|)$ to denote the $\varepsilon$-covering number for the function class $\m H$ under the metric induced by the norm $\|\cdot\|$.

\section{Constrained Maximum Likelihood Estimation}

Let $\m C = \m C([0, T], (\mb T^d)^N)$ be the set of all continuous functions on $(\mb T^d)^N$, and $\{\m F_t: 0\leq t\leq T\}$ be the filtration generated by our observation $\mathcal{X}_T$. According to Girsanov's theorem (Theorem 1.12 in \cite{kutoyants2004statistical}), the log-likelihood ratio function for the continuous time trajectory data $\m X_T$ takes the form as
\begin{equation}\label{eqn:radon-nikodyn_derivative_p}
\begin{aligned}
    L_T(b) :\,=\log \frac{\,\dd {\mb P}^N_{b}}{\,\dd {\mb P}_0^N}=\sum_{i=1}^N\int_0^T\!\!\!\big\langle b(\mu_t^N, X_t^i), \dd X_t^i\big\rangle 
   - \frac{1}{2}\sum_{i=1}^N\int_0^T \!\norm{b(\mu_t^N, X_t^i)}^2\,\dd t,
\end{aligned}  
\end{equation}
where $\frac{\,\dd {\mb P}^N_{b}}{\,\dd {\mb P}_0^N}$ denotes the Radon-Nikodym derivative of the probability measure ${\mb P}^N_{b}$ associated with $\m X_T$ from model $\dd X_t^i = b(t, \mu_t^N, X_t^i)\,\dd t + \dd W_t^i$, $1\leq i\leq N$, relative to the base measure ${\mb P}_0^N$.

When the drift vector field $b^*$ is driven by the external-interaction force in~\eqref{eqn:external-interaction_force}, it is natural to consider the maximum likelihood estimator (MLE) for $b^*$ in the function class
\begin{align*}
    \m H = \bigg\{b: \m P(\mb T^d)\times \mb T^d \to \mb R^d \,\Big|\,\exists\, F, G\in\tilde{\m H}, b(\nu, x) = G(x) + \int_{\mb T^d} F(x-y)\,\dd \nu(y)\bigg\},
\end{align*}
where $\tilde{\m H}$ is a uniformly bounded function class whose elements map from $\mb T^d$ to $\mb R^d$ with certain smoothness (cf. assumptions in Theorem~\ref{thm: converge_of_b} below). Note that for $b \in \m H$, we can equivalently compute the MLE
$\wht b_N := \argmax_{b\in\m H} L_T(b)$
by first obtaining the MLE of $F^*$ and $G^*$ as in~\eqref{eqn:external-interaction_force}
\begin{align}
    (\wht F_N, \wht G_N) &= \argmax_{F, G\in\tilde{\m H}} \tilde{L}_T(F, G), \quad \mbox{subject to} \ \ \int_{\mb T^d} F(x)\,\dd x=0, \label{eqn: MLE_FG}\\
  \mbox{where} \qquad  \tilde{L}_T(F, G) &= -\frac{1}{2}\sum_{i=1}^N\int_0^T\Big\|G(X_t^i) + \frac{1}{N}\sum_{j=1}^N F(X_t^i - X_t^j)\Big\|^2\,\dd t \notag\\
    &\qquad\qquad\qquad + \sum_{i=1}^N\int_0^T\Big\langle G(X_t^i) + \frac{1}{N}\sum_{j=1}^N F(X_t^i-X_t^j),\,\dd X_t^i\Big\rangle, \notag
\end{align}
and then setting 
\begin{align*}
    \wht b_N(\nu, x) = \wht G_N(x) + \int_{\mb T^d}\wht F_N(x-y)\,\dd \nu(y),\quad\forall\,\nu\in\m P(\mb T^d).
\end{align*}
Note that for any solution $(\wht F_N, \wht G_N)$ of~\eqref{eqn: MLE_FG} and a constant $C\neq 0$, $(\wht F_N + C, \wht G_N - C)$ is also a solution. Therefore, we impose an additional restriction $\int_{\mb T^d} F^\ast(x)\,\dd x = 0$ for the sake of identifiability of the interaction kernel. This also explains the extra constraint  $\int_{\mb T^d} F(x)\,\dd x=0$ imposed in the estimation procedure~\eqref{eqn: MLE_FG}.

\section{Convergence Rate of Constrained MLE}

In this section, we first derive a general rate of convergence for the constrained MLE $\hat{b}_N$ based on the entropy method which will lead to a computable and simple bound when specialized to $\alpha$-smooth H\"older function class. In the latter case, we will show that the constrained MLE achieves the minimax optimal rate in Section~\ref{subsec:rate_smooth_holder_class}. As a consequence, we derive the consistency $\wht G_N$ and $\wht F_N$ in Section~\ref{subsec:consistency_G+F} for the external-interaction force model~\eqref{eqn:external-interaction_force}.

\begin{assumption}\label{assump: pointwise_measurable}
The class $\tilde{\m H}$ is \emph{pointwise measurable}, i.e., it contains a countable subset $\m G$ such that for every $h \in \m H$ there exists a sequence $g_m \in \m G$ such that $g_m \to h$ pointwise.
\end{assumption}
Assumption~\ref{assump: pointwise_measurable} is made to avoid measurability issues~\citep{wellner2013weak} since it guarantees that the supremum of a suitable empirical process indexed by $\tilde{\m H}$ is a measurable map. Define the \emph{localized} function class
\[
{\m H}_u^* = \{f \in {\m H}^* : \|f\|_E \leq u \}.
\]
If $B = \sup_{f\in\m H_u^\ast}\|f\|_\infty < \infty$, let $H(\varepsilon, \m H^\ast_u)$ be the cardinality of the smallest set $S\subset \m H^\ast_u$ such that $\forall\,g\in\m H_u^\ast$ there exists $f\in S$ satisfying $\norm{f-g}_E \leq \varepsilon u$ and $\norm{f-g}_\infty \leq \varepsilon B$. We shall notice that $H(\varepsilon, \m H^\ast)$ requires covering w.r.t. both $\norm{\cdot}_\infty$ and $\norm{\cdot}_E$, while $N(\varepsilon, \m H^\ast, \norm{\cdot})$ only requires covering w.r.t. a certain norm $\norm{\cdot}$. We further define several entropy integrals that control the complexity of our nonparametric estimation problem:
\begin{align*}
    J_1(u) &:= \sqrt{T}B\int_0^\frac{1}{2}\log\big(1 + H(\varepsilon, \m H^\ast_u)\big)\,\dd \varepsilon + u\int_0^\frac{1}{2}\sqrt{\log\big(1 + H(\varepsilon,\m H^\ast_u)\big)}\,\dd \varepsilon, \\
    J_2(L) &:= \int_0^{\frac{L}{2}}\log\big(1 + N(\varepsilon, \m H^\ast, \norm{\tilde{\cdot}}_{\textrm{Lip}})\big)\,\dd\varepsilon, \quad J_3(B) := \int_0^\frac{B}{2}\sqrt{\log\big(1 + N(u, \m H^\ast, \norm{\cdot}_\infty)\big)}\,\dd u, \\
    J_4(B) &:= \int_0^\frac{B}{2} \big[\log(1 + N(u,\m H^\ast, \norm{\cdot}_\infty))\big]^{\frac{3}{2}}\,\dd u, \quad J_5(r) := \int_0^\frac{r}{2}\sqrt{\log N(s/\sqrt{T}, \m H^\ast, \norm{\cdot}_\infty)}\,\dd s.
\end{align*}

We assume that $J_1(u), J_2(L), J_3(B), J_4(B), J_5(r)$ are finite for some function class parameters on $\m H^*$ and some localization parameter $u$.

\begin{theorem}[Rate of convergence of constrained MLE]\label{thm: converge_of_b}
Suppose the function class $\tilde{\m H}$ satisfies Assumption~\ref{assump: pointwise_measurable} such that $\norm{F}_\infty \leq B$ and $\norm{F}_\Lip \leq L$ for all $F\in\tilde{\m H}$. Assume there exist positive constants $\delta_N$ and $r_N$ satisfying
\begin{align*}
    \mb E_{\barP}\sup_{g\in\m H^\ast_{\delta_N}}\bigg|\frac{1}{N}\sum_{i=1}^N\int_0^T\big\langle g(\mu_t, X_t^i),\,\dd\overline{W}_t^i\big\rangle\bigg| \leq 2\delta_N^2 \quad \mbox{and} \quad J_5(r_N) \leq \frac{\sqrt{N}r_N^2}{6CB\sqrt{T}},
\end{align*}
where $(\overline{W}_t^i)_{t\geq0}$ is the (transformed) Brownian motion defined in~\eqref{eqn:transformed_BM} and $\barP$ is the associated probability measure. If $b^\ast\in\m H$, then
\begin{align}\label{eqn: converge_of_b}
    \big|\!\big|\wht b_N - b^\ast\big|\!\big|_E \leq 48\Big(\delta_N + r_N + \sqrt{\frac{\log N}{N}}\Big)
\end{align}
with probability at least
\begin{align}\label{eqn: tail_prob}
\begin{aligned}
    & 1 - \bigg[\kappa_1\exp\bigg\{-\frac{\kappa_2N\delta_N^2}{3}\bigg\} + 3\kappa_1\exp\bigg\{-\frac{\kappa_2N\delta_N^2}{C\log NJ_1(1)}\bigg\}\bigg] - 2\kappa_1\exp\bigg\{-\frac{\kappa_2\log N}{4TK_1|\!|\tilde{b}^\ast|\!|_{\textrm{Lip}}^2}\bigg\}\\
    &\quad- 2\kappa_1\exp\bigg\{-\frac{\kappa_2\log N}{CLK_1J_2(L)}\bigg\} - 2\kappa_1\exp\bigg\{-\frac{\kappa_2N(\log N)^2}{CJ_3^2(B)}\bigg\}\\
    &\quad - 2\kappa_1\exp\bigg\{-\Big(\frac{\kappa_2\log N}{CJ_4(B)\log\log N}\Big)^{\frac{2}{3}}\bigg\} - \kappa_1\exp\bigg\{-\frac{\kappa_2Nr_N^2}{36C^2B^2T}\bigg\}.
\end{aligned}
\end{align}
Here $\kappa_1, \kappa_2, K_1, C$ are some positive constants.
\end{theorem}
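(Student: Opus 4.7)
The plan is to run the classical M-estimator basic-inequality-plus-entropy argument, with two modifications tailored to the IPS setting: a Girsanov change of measure that decouples the particles into i.i.d.\ copies, and a specialized U-process maximal inequality for It\^o integrals to handle the resulting decoupling errors.

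From the MLE optimality $L_T(\wht b_N)\geq L_T(b^*)$, substituting the true SDE $\dd X_t^i = b^*(\mu_t^N, X_t^i)\,\dd t + \dd W_t^i$ and completing the square yields the basic inequality
$$\tfrac{1}{2}\norm{\wht b_N - b^*}_X^2 \;\leq\; \frac{1}{N}\sum_{i=1}^N\int_0^T\!\big\langle (\wht b_N-b^*)(\mu_t^N, X_t^i),\,\dd W_t^i\big\rangle.$$
Since the target is $\norm{\cdot}_E$, I must (i) control the stochastic term uniformly over $\wht b_N - b^* \in \m H^*$ and (ii) bridge $\norm{\cdot}_X$ and $\norm{\cdot}_E$. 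Because the trajectories are coupled through $\mu_t^N$, direct chaining under $\mb P^N$ is infeasible; I instead switch to the measure $\barP$ under which the trajectories become i.i.d.\ copies of the McKean-Vlasov process with drift $b^*(\mu_t,\cdot)$ and driving noise $(\overline{W}_t^i)$. Boundedness of $b^*$ gives controlled moments of the Radon-Nikodym derivative, so any tail bound $\barP(A^c)\leq e^{-cN\delta_N^2}$ transfers back to $\mb P^N$ at the cost of a $\log N$ factor in the exponent, which accounts for the $\log N$ appearing inside several exponents of~\eqref{eqn: tail_prob}.

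Under $\barP$ the main stochastic term is a sum of i.i.d.\ It\^o integrals, and on the localized shell $\m H^*_{\delta_N}$ the hypothesized Gaussian-complexity bound $\mb E_{\barP}\sup\leq 2\delta_N^2$ combined with a sub-Gaussian concentration yields the $\delta_N^2$ contribution in~\eqref{eqn: tail_prob}. The $\norm{\cdot}_X$--vs--$\norm{\cdot}_E$ gap splits into a plug-in error (replacing $\mu_t^N$ by $\mu_t$ inside the drift) and a pure empirical-process error (replacing averages over $\{X_t^i\}$ by integration against $\mu_t$). For the plug-in piece, the representation $b(\nu,x)=G(x)+\int F(x-y)\,\dd\nu(y)$ turns the error into a U-statistic in $(X_t^i, X_t^j)$; combining the U-statistic concentration for It\^o integrals in Lemma~\ref{lem: U_statistics} with entropy chaining in the Lipschitz metric (via Lemma~\ref{lem: decoupling_err_Ito}) produces the $J_2$, $J_3$, $J_4$ entries of~\eqref{eqn: tail_prob}, while the $\norm{\tilde b^*}_\Lip$ term accounts for the centering drift. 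For the empirical-process piece, the i.i.d.\ structure under $\barP$ permits Dudley chaining in $\norm{\cdot}_\infty$, producing the $r_N^2$ tail via the assumed calibration $J_5(r_N)\leq \sqrt{N}r_N^2/(6CB\sqrt{T})$. Finally, a peeling argument over geometric shells $\{g\in\m H^*: \norm{g}_E\in[2^k R, 2^{k+1}R]\}$ with $R = 48(\delta_N + r_N + \sqrt{\log N/N})$ compares the linear-in-$g$ bound on the RHS of the basic inequality against the quadratic-in-$g$ LHS; summing the shell-wise tail bounds yields~\eqref{eqn: converge_of_b} with the stated probability.

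The main obstacle will be the U-process arising from the plug-in error: its summands are neither bounded nor independent because of the It\^o component, so a generic union bound over a net in $\m H^*$ would produce a substantially weaker rate with polynomial-in-covering-number losses. The bespoke Lemma~\ref{lem: decoupling_err_Ito}, obtained by threading the U-statistic concentration Lemma~\ref{lem: U_statistics} through an entropy-chaining argument, is the technical heart of the proof and is what yields the refined $J_2$, $J_3$, $J_4$ terms in~\eqref{eqn: tail_prob}. A second subtle point is that the basic inequality is formulated under $\mb P^N_*$ while the chaining is executed under $\barP$; carefully juggling the two measures (and paying the $\log N$ tariff) is needed to obtain the precise exponents displayed in~\eqref{eqn: tail_prob}.
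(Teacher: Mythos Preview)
Your high-level architecture matches the paper's: Girsanov decoupling to $\barP$, basic inequality from MLE optimality, localization via the Gaussian-complexity hypothesis, a U-process maximal inequality for the It\^o decoupling error, and a norm-equivalence lemma for $\norm{\cdot}_X$ versus $\norm{\cdot}_E$. Three points of divergence are worth flagging.

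First, the paper does \emph{not} peel over geometric shells. Instead it decomposes the sample space into $\{\norm{\wht\Delta_N}_E\leq\delta_N\}$ (trivial) and its complement, and on the complement invokes a single localization event (Lemma~\ref{lem: order_of_ito}): using the star-shape of $\m H^*$, any $g$ with $\norm{g}_E\geq\delta_N$ is rescaled to the sphere $\norm{g}_E=\delta_N$, which converts the hypothesis $\mb E_{\barP}\m Z_N(\delta_N)\leq 2\delta_N^2$ plus Adamczak's deviation inequality directly into the bound $|\text{stochastic term}|\leq 4\delta_N\norm{\wht\Delta_N}_E$. The resulting quadratic inequality in $\norm{\wht\Delta_N}_E$ is then solved once; no shell summation occurs.

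Second, your attribution of the entropy integrals is slightly off. The paper's refined basic inequality (after replacing $\mu_t^N\to\mu_t$ and $W\to\overline W$ on both sides) isolates three distinct decoupling errors: $T_1=\frac1N\sum_i\int\|b^*(\mu_t^N,X_t^i)-b^*(\mu_t,X_t^i)\|^2\,\dd t$ gives the $\norm{\tilde b^*}_\Lip$ tail; $T_2=\sup_{g}\frac1N\sum_i\int\|g(\mu_t^N,X_t^i)-g(\mu_t,X_t^i)\|^2\,\dd t$ is a \emph{squared} (no It\^o) decoupling error and is what produces $J_2$; only $T_3=\frac1N\sum_i\int\langle\wht\Delta_N(\mu_t^N,\cdot)-\wht\Delta_N(\mu_t,\cdot),\dd\overline W_t^i\rangle$ is the It\^o U-process, and its diagonal and off-diagonal pieces yield $J_3$ and $J_4$ respectively.

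Third, the measure transfer is not the source of the $\log N$ factors in the exponents. Lemma~\ref{lem:converting_lemma} gives $\mb P^N(\m B)\leq\kappa_1\barP(\m B)^{\kappa_2}$ with absolute constants $\kappa_1,\kappa_2$; this is where the $\kappa_1,\kappa_2$ in~\eqref{eqn: tail_prob} come from. The $\log N$ multiplying $J_1(1)$ arises inside Lemma~\ref{lem: order_of_ito} from bounding $\norm{\max_{1\leq i\leq N}\sup_g\int\langle g,\dd\overline W^i\rangle}_{\psi_1}$, and the other $\log N$ factors come from the specific choices $u^2=\log N/N$ or $u=\log N/N$ plugged into Lemmas~\ref{lem: decoupling_err}--\ref{lem: decoupling_err_Ito}.
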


There are some interesting remarks for Theorem~\ref{thm: converge_of_b} in order.

\begin{enumerate}[nolistsep]
    \item Theorem~\ref{thm: converge_of_b} remains true for drift vector field $b^*$ with anisotropic interaction force, namely $b^\ast(\nu, x) = \int_{\mb T^d}\tilde{b}^\ast(x, y)\,\dd\nu(y)$ with  $\tilde{b}^\ast = G^*(x) + F^*(x, y)$. In this case, we  need to consider functions in $\tilde{\m H}$ map from $\mb T^{2d}$ to $\mb R^d$.
    
    \item $\delta_N$ corresponds to the Gaussian complexity of function class $\m H^\ast$ in a discrete setting, while $r_N$ is an upper bound of Rademacher complexity of $\m H^\ast$. Intuitively, the estimation problem will be harder if the function class is more complex, and thus $\delta_N$ and $r_N$ should affect the rate of convergence in certain extent. It is quite common that the rate of convergence of nonparametric estimation depends on both $\delta_N$ and $r_N$, see e.g. Corollary 14.15 in \cite{wainwright2019high} as an example. 
    
    \item Usually we use chaining method~\citep{wainwright2019high, geer2000empirical} to bound the expectation term in order to derive an explicit form of $\delta_N$. Since we can show the sum of i.i.d.~It\^o integral is sub-exponential, one direct method is to use $\psi_1$-norm to bound the expectation (see Lemma \ref{lem: psi1_chaining}). Another possible way to apply chaining is based on Bernstein-Orlicz norm~\citep{van2013bernstein}. In this case, bracketing number rather than covering number will be needed.
    
    \item In practice, we can only observe discretely sampled trajectory data. Thus it is important to understand the impact of discretization. Note that the drift coefficient of an SDE is related to the mean function of the corresponding stochastic process. It is known that, in functional data analysis, convergence rate of estimation of mean function will not be affected if sampling frequency is large enough~\citep{cai2011optimal}. This is called phase \emph{transition phenomenon}. We conjecture that a similar phase transition phenomenon may occur in our setting. In parametric setting, if parameters have linear effects on the interaction function, the problem is studied by~\cite{bishwal2011estimation} and the convergence rate remains optimal as $O(n^{-1/2})$. A rigorous analysis in nonparametric setting is open and we leave it as the future work.
    
    \item As is common in the nonparametric regression literature, the constrained MLE is usually adopted due to its technical simplicity; while the penalized MLE (e.g. by adding a squared RKHS norm penalty) is used for practical computation since they are dual to each other: with proper choice of the regularization parameter, they lead to the same solution. With discretely sampled trajectory data, the penalized MLE can be equivalently formulated as a finite dimensional optimization problem due to representer’s theorem (kernel trick).
\end{enumerate}

\noindent Our proof of Theorem~\ref{thm: converge_of_b} is quite involved. The proof builds on a number of recently developed technical tools such as change of measure via Girsanov's theorem for decoupling the IPS, concentration inequalities for unbounded empirical processes and degenerate $U$-processes, localization technique for sum of i.i.d.~It\^o integrals. We shall sketch the main structure of the argument in Section~\ref{subsec:proof_sketch_rate_constrainedMLE} and defer the complete proof details to Appendix. 

\subsection{Application to H\"older smooth function class}
\label{subsec:rate_smooth_holder_class}
Consider the case where $\tilde{\m H} = C_1^\alpha\big([0, 1]^d\big)$ with fixed smooth parameter $\alpha$ and bounded $\alpha$-H\"{o}lder norm. Here, the $\alpha$-smooth H{\"o}lder function class is the set of all functions $f$ such that 
\begin{align*}
    \norm{f}_\alpha := \max_{|k|\leq \lfloor\alpha\rfloor}\sup_{x}\Big|D^kf(x)\Big| + \max_{|k| = \lfloor\alpha\rfloor}\sup_{x\neq y}\frac{\big|D^{k}f(x) - D^{k}f(y)\big|}{\norm{x- y}^{\alpha - \lfloor\alpha\rfloor}} \leq M
\end{align*}
with some fixed $M > 0$. In this case, assumptions of boundedness in Theorem~\ref{thm: converge_of_b} hold for $B = L = M$.

From Theorem 2.7.1 in \cite{wellner2013weak} we know
\begin{align*}
    \log N(\varepsilon, \tilde{\m H}, \norm{\cdot}_\infty) &\lesssim \varepsilon^{-\frac{d}{\alpha}}.
\end{align*}
By Theorem~\ref{thm: converge_of_b} we obtain the following rate of convergence in Corollary~\ref{coro: holder_smooth}. The detailed proof is deferred to Appendix. Our theory works for Sobolev (Besov) space with bounded norm as well, and an analogy of the following corollary can be derived by a similar argument. 

\begin{corollary}[H\"older smooth drift estimation error]\label{coro: holder_smooth}
Suppose the $\alpha$-H\"{o}lder smooth function class $\tilde{\m H}$ satisfies $\alpha > 3d/2$, then there are positive constants $C$ and $C'$ such that
\begin{align}\label{eqn:minimax_smooth_holder_class}
    \big|\!\big| \wht b_N - b^\ast\big|\!\big|_E \lesssim N^{-\frac{\alpha}{d + 2\alpha}}
\end{align}
with probability at least $1 - C\exp\big\{-C'(\frac{\log N}{\log\log N})^{2/3}\big\}$.
\end{corollary}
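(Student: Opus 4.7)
}
The plan is to apply Theorem~\ref{thm: converge_of_b} to $\tilde{\m H} = C_1^\alpha([0,1]^d)$, with the constants $B=L=M$. The only work is to (a) pick valid $\delta_N$ and $r_N$ of order $N^{-\alpha/(d+2\alpha)}$, and (b) check that the condition $\alpha>3d/2$ makes every entropy integral $J_1,\ldots,J_5$ finite so that the tail in~\eqref{eqn: tail_prob} collapses to the stated form. Throughout I would use that a function of the form $b(\nu,x)=G(x)+\int F(x-y)\dd\nu(y)$ inherits the metric entropy of $\tilde{\m H}$ in both $\|\cdot\|_\infty$ and $\|\tilde{\cdot}\|_{\mathrm{Lip}}$ up to an absolute constant, so all covering numbers of $\m H^\ast$ and $\m H^\ast_u$ are bounded by $\exp(C\varepsilon^{-d/\alpha})$ via the classical result $\log N(\varepsilon,\tilde{\m H},\|\cdot\|_\infty)\lesssim \varepsilon^{-d/\alpha}$.

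First I would bound the Gaussian complexity. Under $\barP$ the trajectories $(X^i_\cdot)$ are i.i.d.\ and each It\^o integral $\int_0^T\langle g(\mu_t,X_t^i),\dd \bar W_t^i\rangle$ has quadratic variation $\|g\|_E^2\le\delta_N^2$, so it is sub-exponential with $\psi_1$-norm $\lesssim\|g\|_E$. Applying the $\psi_1$-chaining inequality (Lemma~\ref{lem: psi1_chaining} referenced in Remark 3) with the Hölder entropy yields
\begin{align*}
\mb E_{\barP}\sup_{g\in\m H^\ast_{\delta_N}}\Bigl|\tfrac1N\sum_{i=1}^N\!\int_0^T\!\langle g(\mu_t,X_t^i),\dd\bar W_t^i\rangle\Bigr|
\,\lesssim\, \tfrac{1}{\sqrt N}\int_0^{\delta_N}\!\!\varepsilon^{-d/(2\alpha)}\dd\varepsilon+\mathrm{l.o.t.}
\,\asymp\, \tfrac{1}{\sqrt N}\,\delta_N^{1-d/(2\alpha)}.
\end{align*}
Requiring this to be $\le 2\delta_N^2$ and solving gives $\delta_N\asymp N^{-\alpha/(d+2\alpha)}$, which uses only $\alpha>d/2$.

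Next I would choose $r_N$. With the same entropy estimate, $J_5(r)\lesssim \int_0^{r/2}(s/\sqrt T)^{-d/(2\alpha)}\dd s\asymp T^{d/(4\alpha)}r^{1-d/(2\alpha)}$, so the constraint $J_5(r_N)\le \sqrt N r_N^2/(6CB\sqrt T)$ in Theorem~\ref{thm: converge_of_b} is satisfied by the same choice $r_N\asymp N^{-\alpha/(d+2\alpha)}$. Since $\alpha/(d+2\alpha)<1/2$, the parametric term $\sqrt{\log N/N}$ in~\eqref{eqn: converge_of_b} is dominated, giving the claimed rate $\|\hat b_N-b^\ast\|_E\lesssim N^{-\alpha/(d+2\alpha)}$.

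Finally I would verify the probability bound. The key point — and the only place where $\alpha>3d/2$ is really needed — is that
\[
J_4(B)=\int_0^{B/2}\bigl[\log(1+N(u,\m H^\ast,\|\cdot\|_\infty))\bigr]^{3/2}\dd u\,\lesssim\,\int_0^{B/2} u^{-3d/(2\alpha)}\dd u,
\]
which is finite exactly when $3d/(2\alpha)<1$, i.e.\ $\alpha>3d/2$. The same threshold a fortiori makes $J_1(1),J_2(L),J_3(B)$ finite (their integrands carry exponents $d/\alpha$, $d/\alpha$, and $d/(2\alpha)$ respectively). Plugging the finite $J_i$'s and $N\delta_N^2\asymp N^{d/(d+2\alpha)}$ into~\eqref{eqn: tail_prob}, every exponential term decays at least polynomially fast in $N$, and the slowest one is the $J_4$ term, which produces the stretched-exponential rate $\exp\{-C'(\log N/\log\log N)^{2/3}\}$ in the statement.

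The main obstacle I anticipate is making the $\psi_1$-chaining step fully rigorous: the localized class $\m H^\ast_{\delta_N}$ must be shown to be star-shaped around $0$ with the right $\|\cdot\|_\infty$ and $\|\cdot\|_E$ covering control (the two-norm definition of $H(\varepsilon,\m H^\ast_u)$), and the Itô integrals must be handled with the BDG/sub-exponential tail rather than a plain Gaussian bound. Once this is in place, everything else is the standard rate calculation and bookkeeping of the seven probability terms in~\eqref{eqn: tail_prob}.
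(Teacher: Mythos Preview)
Your proposal is correct and follows essentially the same route as the paper's proof: apply Lemma~\ref{lem: psi1_chaining} with the H\"older entropy estimate $\log H(\varepsilon,\m H^\ast_u)\lesssim(\varepsilon u)^{-d/\alpha}$ to obtain $\delta_N\asymp N^{-\alpha/(d+2\alpha)}$, compute $r_N$ from $J_5$ identically, and then identify the $J_4$ term as the bottleneck in~\eqref{eqn: tail_prob}. One small correction: the term you label ``l.o.t.'' in the $\psi_1$-chaining bound is $N^{-1}\delta_N^{-d/\alpha}$ (coming from the $\log$-integral in $J_1$), which at the critical radius is actually of the \emph{same} order as your displayed leading term---so its convergence requires $\alpha>d$, not merely $\alpha>d/2$---but since $\alpha>3d/2$ is assumed this does not affect the conclusion.
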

Note that rate of convergence for the MLE derived in~\eqref{eqn:minimax_smooth_holder_class} attains the minimax rate of estimating the drift term in IPS~\citep{della2021nonparametric}. When $\alpha \leq 3d/2$, the term $J_4(B)$ in our general result Theorem~\ref{thm: converge_of_b} (also cf. Lemma \ref{lem: decoupling_err_Ito} for definition) is not finite. However, we still can derive a rate of convergence for $\hat{b}_N$ by refining the definition of $J_4$ as the one in \cite{van2014uniform} to avoid the integrability issue. The price we pay is that the tail probability converges to zero more slowly, which would translate to a sub-optimal rate of convergence of $\wht b_N$.

\subsection{Estimating interaction kernel in Vlasov model}
\label{subsec:consistency_G+F}
In this section, we specialize our theory to the external-interaction force system, and study the convergence behaviour of interaction kernel $F^\ast$. This is an inverse problem and can be done in a similar argument as in \cite{della2021nonparametric} by Fourier transform. \cite{della2021nonparametric} \emph{explicitly} use Fourier transform to construct an estimator based on deconvolving a kernel density estimator for $\mu_t$ from an estimator for $F^*\ast \mu_t$, where $\ast$ denotes the function convolution operator. In comparison, we \emph{implicitly} use Fourier transform to derive a stability estimate for translating an error bound on $\wht b_N$ to that on $\wht F_N$ in the analysis. More specifically, recall that
\begin{align}\label{eqn: invariance}
    \wht b_N(\mu_t, x) - b^\ast(\mu_t, x)= \wht G_N(x) - G^\ast(x) + \int_{\mb T^d}\big(\wht F_N(x-y) - F^\ast(x-y)\big)\,\dd\mu_t(y).
\end{align}
Let $L^2([0,T])$ denote the space of all square-integrable functions on $[0,T]$ and view $\mu(x,t)=\mu_t(x)$ as a function of $(x,t)$.
For any linear operator $\m L:\,L^2([0,T]) \to \mb R$, $f\mapsto \m L f := \int_0^T f(t)w(t)\,\dd t$, where $w$ is a bounded measurable function on $[0, T]$ such that $\int_0^T w(t)\,\dd t = 0$, we obtain by applying $\m L$ to both sides of~\eqref{eqn: invariance},
\begin{align}\label{eqn: Lb = F*Lu}
    \m L \big[(\wht b_N-b^\ast)(\mu, x)\big]  = \m L\big[(\wht F_N- F^\ast)\ast \mu(x)\big] = \big((\wht F_N - F^\ast)\ast \m L\mu\big)(x),
\end{align}
where we have used the property that $\m Lg=0$ for any $t$-independent function $g$.
Since the goal is to relate $\|\wht F_N - F^\ast\|_2$ to $\|\wht b_N - b^\ast\|_E$, we may apply Fourier transform to both sides of~\eqref{eqn: Lb = F*Lu} to deconvolute $\wht F_N- b^\ast$ and $\m L\mu$, leading to 
\begin{align}\label{eqn: deconvolution}
    \Big(\m L \big[(\wht b_N-b^\ast)(\mu, \cdot)\big]\Big)_k = (\wht F_N - F^\ast)_k\cdot (\m L\mu)_k,\quad \forall\,k\in\mb Z^d.
\end{align}
Note that by the definition of $\m L$, we have $(\m L\mu)_0 = \m L \int_{\mb T^d} \,\dd\mu(x) = \m L 1=0$, so equation~\eqref{eqn: deconvolution} only determines the Fourier coefficient of $(\wht F_N - F^\ast)_k$ for $k\neq 0$. However, $(\wht F_N - F^\ast)_0$ can be uniquely determined by our additional identifiability constraint $\int_{\mb T^d}\wht F_N(x)\,\dd x = \int_{\mb T^d}F^\ast(x)\,\dd x=0$.

To ensure that $\wht F_N - F^\ast$ remains small when $\wht b_N$ is close to $b^\ast$,  we need the following assumption motivated by identity~\eqref{eqn: deconvolution}.
\begin{assumption}\label{assump: non-stationary}
There exists a bounded measurable function $w(t)$ on $[0, T]$ such that $\int_0^T w(t)\,\dd t =0$ and $(\m L \mu)_k=\int_0^T(\mu_t)_k\,w(t)\,\dd t \neq 0$ for all nonzero $k\in\mb Z^d$.
\end{assumption}

Assumption~\ref{assump: non-stationary} guarantees the identifiability of interaction force $F^\ast$ (and therefore external force $G^\ast$) from the drift vector field $b^\ast$, and requires the system to be away from stationarity. To see this, consider the ideal setting where we exactly know the true $b^\ast$ and $\{\mu_t: 0\leq t\leq T\}$, and want to uniquely recover $F^\ast$ from~\eqref{eqn:external-interaction_force}. Suppose system~\eqref{eqn: interact_system} already attains stationarity, i.e. $\mu_t = \mu^\ast$ for all $t\in[0,T]$ with $\mu^\ast$ denoting the stationary distribution of the system which solves equation~\eqref{eqn:mckean-vlasov_eqn} with $\partial_t\mu=0$. Then it is impossible to separate out the time-homogeneous interaction term $F^\ast\ast \mu^\ast$ from the drift vector field
\begin{align*}
    b^\ast(\mu^\ast,x) = G^\ast(x) + \int_{\mb T^d} F^\ast(x-y)\,\dd\mu^\ast(y),
\end{align*}
since for any function $g:\,\mb T^d\to\mb R$, the new pair $(G',\,F') = (G^\ast-g\ast \mu^\ast,\, F^\ast+g)$ induces the same drift $b^\ast$.
However, this setting violates Assumption~\ref{assump: non-stationary} since $(\m L\mu)_k = 0$ for all $k\in\mb Z^d$.
In other words, the interaction force $F^\ast$ can only be recovered from the transient behaviour of the system, and Assumption~\ref{assump: non-stationary} is one mathematical description implying the system to be away from stationarity.


The linearly dependence structure~\eqref{eqn: deconvolution} in the frequency domain suggests that the estimation error of $\wht F_N$ depends on the accuracy of $\wht b_N$ and the behaviour of $(\m L \mu)_k$. It is common that an inverse problem related to a convolution equation, such as equation~\eqref{eqn: Lb = F*Lu}, tends to be numerically unstable, since it will be ill-posed when the Fourier coefficients $\{(\m L\mu)_k\}_{k=1}^\infty$ of $\m L\mu$ decay too fast~\citep{isakov2006inverse}. By quantifying the stability of solution to~\eqref{eqn: Lb = F*Lu} around the true interaction $F^\ast$, we arrive at the following corollary. 

\begin{corollary}[Interaction kernel estimation error]\label{cor: rate_of_interaction}
Let $\eta_N$ be the smallest integer satisfying $\eta_N(\delta_N + r_N + \frac{\log N}{N})\leq C_1 \inf_{0 < \norm{k} < \eta_N}\norm{(\m L\mu)_k}$. If $F^\ast$ and $G^\ast$ belong to $\tilde{\m H}$ and both have finite $H_1$-norms, then
\begin{align*}
    \big|\!\big|\wht F_N - F^\ast\big|\!\big|_2 \leq C_2 \,\eta_N^{-1}
\end{align*}
holds with at least probability given by~\eqref{eqn: tail_prob}. Here constants $(C_1,,C_2)$ are independent of $N$.
\end{corollary}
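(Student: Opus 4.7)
\textbf{Proof plan for Corollary~\ref{cor: rate_of_interaction}.} The strategy is to pass to the frequency domain via Parseval, split the sum defining $\|\wht F_N - F^\ast\|_2^2$ at the cut-off frequency $\eta_N$, and handle the low and high frequency ranges by two distinct mechanisms: the deconvolution identity~\eqref{eqn: deconvolution} for low frequencies and the $H^1$-regularity of the class $\tilde{\m H}$ for the tail. By Parseval's identity on $\mb T^d$,
\begin{align*}
\big\|\wht F_N - F^\ast\big\|_2^2 = \sum_{k\in\mb Z^d} \big\|(\wht F_N - F^\ast)_k\big\|^2.
\end{align*}
The $k=0$ term vanishes by the identifiability constraint $\int_{\mb T^d}\wht F_N = \int_{\mb T^d}F^\ast=0$, and it remains to control the low-frequency sum over $0<\|k\|<\eta_N$ and the tail sum over $\|k\|\geq \eta_N$.

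For the low-frequency range, identity~\eqref{eqn: deconvolution} gives $\|(\wht F_N - F^\ast)_k\|=\|(\m L[(\wht b_N - b^\ast)(\mu,\cdot)])_k\|/\|(\m L\mu)_k\|$ whenever $(\m L\mu)_k\neq 0$. Summing and applying Parseval to the numerator,
\begin{align*}
\sum_{0<\|k\|<\eta_N}\! \big\|(\wht F_N - F^\ast)_k\big\|^2 \leq \frac{\big\|\m L[(\wht b_N - b^\ast)(\mu,\cdot)]\big\|_2^2}{\inf_{0<\|k\|<\eta_N}\|(\m L\mu)_k\|^2}.
\end{align*}
A Cauchy--Schwarz bound on the integral operator $\m L f=\int_0^T f(t) w(t)\,\dd t$ yields $\|\m L[(\wht b_N - b^\ast)(\mu,\cdot)]\|_2^2 \lesssim \int_{\mb T^d}\!\int_0^T \|(\wht b_N - b^\ast)(\mu_t,x)\|^2\,\dd t\,\dd x$, which I will convert to $\|\wht b_N - b^\ast\|_E^2$ using the uniform positivity of $\mu_t$ on $\mb T^d$ guaranteed by standard Fokker--Planck regularity on the torus. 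Combining with the error bound $\|\wht b_N - b^\ast\|_E \lesssim \delta_N + r_N + \sqrt{\log N/N}$ from Theorem~\ref{thm: converge_of_b} and the defining inequality of $\eta_N$, the low-frequency sum is bounded by a constant multiple of $\eta_N^{-2}$.

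For the tail, the $H^1$-bound on $\tilde{\m H}$ (together with the fact that $\wht F_N,F^\ast\in\tilde{\m H}$) gives $\sum_{k} \|k\|^2 \|(\wht F_N - F^\ast)_k\|^2 \lesssim \|\wht F_N - F^\ast\|_{H^1}^2 \leq C$, so
\begin{align*}
\sum_{\|k\|\geq \eta_N} \!\big\|(\wht F_N - F^\ast)_k\big\|^2 \leq \eta_N^{-2}\sum_{k\in\mb Z^d}\|k\|^2 \big\|(\wht F_N - F^\ast)_k\big\|^2 \lesssim \eta_N^{-2}.
\end{align*}
Adding the two contributions and taking a square root yields the claimed rate $\|\wht F_N - F^\ast\|_2 \leq C_2\,\eta_N^{-1}$, holding on the event of probability~\eqref{eqn: tail_prob} inherited from Theorem~\ref{thm: converge_of_b}. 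The main technical subtlety is the step converting $\|\m L[(\wht b_N - b^\ast)(\mu,\cdot)]\|_2$ (Lebesgue integration in $x$) into $\|\wht b_N - b^\ast\|_E$ (integration against $\dd\mu_t$); this requires a uniform lower bound on the density of $\mu_t$ away from zero on the compact torus, which must be either imposed as background regularity or extracted from the McKean--Vlasov dynamics through a heat-kernel lower bound argument.
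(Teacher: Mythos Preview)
Your proposal is correct and follows essentially the same route as the paper's proof: the paper also splits $\|\wht F_N - F^\ast\|_2^2$ via Plancherel at a cut-off $C$, handles low frequencies through the deconvolution identity~\eqref{eqn: deconvolution} and high frequencies through the $H^1$-bound (this is packaged as a separate stability lemma), and then converts $\|\m L[(\wht b_N-b^\ast)(\mu,\cdot)]\|_2$ to $\|\wht b_N-b^\ast\|_E$ using a uniform lower bound on the density of $\mu_t$. Your identification of the density lower bound as the one nontrivial step matches exactly how the paper handles it (by reducing to $\mu_0>0$ and invoking compactness of $[0,T]\times\mb T^d$).
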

\begin{remark}
In the proof of Corollary~\ref{cor: rate_of_interaction}, we only used the condition that the Sobolev norm $\|F^\ast\|_{H^1}$ is finite. If we further use the condition that $F^\ast$ is $\alpha$-H\"{o}lder continuous with $\alpha>1$, then the error bound can be improved to $\eta_N^{-\alpha}$ where $\eta_N^\alpha(\delta_N + r_N + \frac{\log N}{N})\lesssim \inf_{0<\norm{k} < \eta_N}\norm{(\m L\mu)_k}$.
In particular, if Assumption~\ref{assump: non-stationary} holds, then Corollary~\ref{cor: rate_of_interaction} implies $\wht F_N$ to be a consistent estimator of $F^\ast$ as $N\to\infty$.
\end{remark}

\section{Proof of Theorem~\ref{thm: converge_of_b}}\label{subsec:proof_sketch_rate_constrainedMLE}

By decomposing the sample space into $\m E = \{\big|\!\big|\wht \Delta_N\big|\!\big|_E \leq \delta_N\}$ and $\m E^c$, Theorem~\ref{thm: converge_of_b} is true on the event $\m E$. So, we only need the proof on the event $\m E^c = \{\big|\!\big|\wht \Delta_N\big|\!\big|_E > \delta_N\}$.

\noindent \underline{\bf Step 1: decoupling.} The first technical difficulty is to decouple the interaction effect between particles that would cause the dependence of particle trajectories. Let ${\mb P}^N$ be the probability measure on $(\m C, \{\m F_t\}_{t=0}^T)$ induced by solution of the true data generating mechanism~\eqref{eqn: interact_system}. Motivated by the change of measure argument in \cite{lacker2018strong} and \cite{della2021nonparametric}, we construct a new measure $\barP$ on $(\m C, \{\m F\}_{t=0}^T)$, under which $(X^i_t)_{t=0}^T$ and $(X^j_t)_{t=0}^T$ are independent for all $1\leq i\neq j\leq N$. This is possible thanks to Girsanov's Theorem. Specifically, define a process $\{Z_t\}_{t=0}^T$ as
\begin{align*}
    Z_t = \exp\bigg\{\sum_{i=1}^N\bigg[\int_0^t\!\big\langle b^\ast(\mu_s, X_s^i) - b^\ast(\mu_s^N, X_s^i),\dd W_s^i\big\rangle\! - \!\frac{1}{2}\int_0^t\!\|b^\ast(\mu_s, X_s^i) - b^\ast(\mu_s^N, X_s^i)\|^2\dd s\bigg]\bigg\},
\end{align*}
and let $\,\dd \barP = Z_T\,\dd\mb P^N$. By Girsanov's Theorem,
\begin{align}\label{eqn:transformed_BM}
    \overline{W}_t^i := W_t^i - \int_0^t \Big[b^\ast(\mu_s, X_s^i) - b^\ast(\mu_s^N, X_s^i)\Big]\,\dd s,\quad 0\leq t\leq T
\end{align}
are i.i.d.~Brownian motions on $\mb T^d$ under $\barP$. With the transformation~\eqref{eqn:transformed_BM}, the original IPS (\ref{eqn: interact_system}) turns into a system of \emph{independent} SDEs given by
\begin{align}\label{eqn: mean_field_SDE}
\dd X_t^i = b^\ast(\mu_t, X_t^i)\,\dd t + \,\dd\overline{W}_t^i
\end{align}
with i.i.d.~initialization $\mathcal{L}(X_0^1,\dots,X_0^N) = \otimes_{i=1}^N \mu_0$. Our subsequent strategy for analyzing the log-likelihood ratio is to control the probability of some "bad events" under $\barP$, and then to use the following Lemma~\ref{lem:converting_lemma} to convert it back to the probability under $\mb P^N$. The proof of Lemma~\ref{lem:converting_lemma} can be found in Theorem 18 in \cite{della2021nonparametric}.
\begin{lemma}[Change of measure equivalence]\label{lem:converting_lemma}
There are positive constants $\kappa_1$ and $\kappa_2$ such that for any $\m F_T$-measurable event $\m B$,
\begin{align*}
    \mb P^N(\m B) \leq \kappa_1\barP(\m B)^{\kappa_2}.
\end{align*}
\end{lemma}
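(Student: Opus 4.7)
The plan is to reduce the claim to an $N$-uniform $p$-th moment bound on $Z_T^{-1}$ under $\barP$, and then obtain this bound by factoring $Z_T^{-p}$ as a $\barP$-supermartingale times an exponential of its quadratic variation, the latter being controlled via propagation of chaos. Since $\mb P^N \ll \barP$ with Radon--Nikodym derivative $Z_T^{-1}$, Hölder's inequality with conjugate exponents $p, q\in(1,\infty)$ gives
\begin{align*}
\mb P^N(\m B) = \mb E_{\barP}\bigl[Z_T^{-1}\,\mathbf{1}_{\m B}\bigr]\leq \bigl(\mb E_{\barP}[Z_T^{-p}]\bigr)^{1/p}\,\barP(\m B)^{1/q},
\end{align*}
which already has the advertised form with $\kappa_1 = (\mb E_{\barP}[Z_T^{-p}])^{1/p}$ and $\kappa_2 = 1/q$. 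The task is therefore reduced to bounding $\mb E_{\barP}[Z_T^{-p}]$ by a constant independent of $N$.

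Substituting $\dd W_s^i = \dd\ovl W_s^i + \Delta_i(s)\,\dd s$ (with $\Delta_i(s) := b^\ast(\mu_s, X_s^i) - b^\ast(\mu_s^N, X_s^i)$) from~\eqref{eqn:transformed_BM} into the definition of $Z_T$ and simplifying yields
\begin{align*}
Z_T^{-1} = \exp\Bigl\{-\sum_{i=1}^N\int_0^T\langle\Delta_i(s),\dd\ovl W_s^i\rangle - \tfrac{1}{2}\sum_{i=1}^N\int_0^T\|\Delta_i(s)\|^2\,\dd s\Bigr\},
\end{align*}
a nonnegative $\barP$-supermartingale. Raising to the $p$-th power and completing the square factorizes
\begin{align*}
Z_T^{-p} = M_T^{(p)}\cdot\exp\bigl\{\tfrac{p(p-1)}{2}B_T\bigr\},\qquad B_T := \sum_{i=1}^N\int_0^T\|\Delta_i(s)\|^2\,\dd s,
\end{align*}
where $M_T^{(p)}$ is the Doléans--Dade exponential associated with the local martingale $-p\sum_i\int\langle\Delta_i,\dd\ovl W^i\rangle$ and so satisfies $\mb E_{\barP}[M_T^{(p)}]\leq 1$. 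A Cauchy--Schwarz then reduces the problem to an $N$-uniform bound on $\mb E_{\barP}[\exp\{\lambda B_T\}]$ for some $\lambda = \lambda(p)>0$.

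The crucial fact is that under $\barP$ the trajectories $(X_t^i)_{t\leq T}$ are \emph{i.i.d.}\ copies of the mean-field SDE~\eqref{eqn: mean_field_SDE}, so $\mu_s^N$ is, at each $s$, the empirical measure of $N$ i.i.d.\ draws from $\mu_s$. Combined with the structure $b^\ast(\nu,x) = \int\tilde b^\ast(x,y)\,\dd\nu(y)$ and the boundedness of $\tilde b^\ast$, the residual $\Delta_i(s) = \int\tilde b^\ast(X_s^i,y)\,\dd\mu_s(y) - \tfrac{1}{N}\sum_j\tilde b^\ast(X_s^i, X_s^j)$ is, conditional on $X_s^i$, a centered empirical average of bounded i.i.d.\ terms; a Hoeffding-type concentration yields subgaussian tails of scale $O(N^{-1/2})$ on $\|\Delta_i(s)\|$. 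Summing and integrating then gives $\mb E_{\barP}[B_T] \lesssim T$ together with an $N$-uniform exponential moment bound $\mb E_{\barP}[\exp\{\lambda B_T\}]\leq C(\lambda, T)$ for small enough $\lambda$, closing the Cauchy--Schwarz estimate from the previous paragraph.

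The technical heart of the argument is precisely this $N$-uniform exponential moment bound on $B_T$: the crude worst-case estimate $\|\Delta_i\|\leq 2\|\tilde b^\ast\|_\infty$ gives only $B_T\leq 4NT\|\tilde b^\ast\|_\infty^2$, whose exponential moment blows up with $N$ and makes the Hölder strategy fail. Salvaging the argument requires using the $N^{-1/2}$ mean-field concentration rate for $\Delta_i(s)$ \emph{and} lifting it to a joint exponential moment statement uniform over $(i,s)\in\{1,\dots,N\}\times[0,T]$, which needs a Talagrand-type concentration for the empirical process $\nu\mapsto \int\tilde b^\ast(\cdot, y)(\,\dd\mu_s^N(y) - \,\dd\mu_s(y))$. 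This is the delicate step that the authors outsource to Theorem~18 of~\cite{della2021nonparametric}.
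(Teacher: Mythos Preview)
The paper does not give its own proof of this lemma; it simply cites Theorem~18 of \cite{della2021nonparametric}. Your sketch is the standard route and is correct in substance: H\"older reduces the claim to an $N$-uniform bound on $\mb E_{\barP}[Z_T^{-p}]$, and writing $Z_T^{-1}$ as a $\barP$-Dol\'eans--Dade exponential leaves only the exponential moment of the quadratic variation $B_T=\sum_i\int_0^T\|\Delta_i(s)\|^2\,\dd s$ to control.

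Two minor remarks. First, the Cauchy--Schwarz step as you phrase it is slightly off: applying it directly to the factorization $Z_T^{-p}=M_T^{(p)}\exp\{\tfrac{p(p-1)}{2}B_T\}$ would require $\mb E_{\barP}[(M_T^{(p)})^2]$, which reintroduces an $e^{cB_T}$ factor. The clean split is to write $Z_T^{-p}=\big(e^{-2p\sum_i\int\langle\Delta_i,\dd\ovl W^i\rangle-2p^2B_T}\big)^{1/2}\cdot e^{(p^2-\frac{p}{2})B_T}$ and then apply Cauchy--Schwarz, so that the first factor has $\barP$-expectation at most $1$ and only $\mb E_{\barP}[e^{(2p^2-p)B_T}]$ remains. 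Second, you overstate the difficulty of the exponential moment bound on $B_T$: no Talagrand-type empirical-process concentration is needed. Since the $Y_i:=\int_0^T\|\Delta_i\|^2\,\dd s$ are identically distributed under $\barP$, the generalized H\"older inequality gives $\mb E_{\barP}[e^{\lambda\sum_i Y_i}]\leq \mb E_{\barP}[e^{\lambda N Y_1}]$, and then Jensen over $t\in[0,T]$ together with the pointwise $2p$-th moment bound $\mb E_{\barP}\|\Delta_1(t)\|^{2p}\leq p!\,K_1^p(N-1)^{-p}\|\tilde b^\ast\|_{\Lip}^{2p}$ (Lemma~\ref{lem: bd_2pmoment} here, Lemma~22 in \cite{della2021nonparametric}) yields a geometric series that converges uniformly in $N$ for all small enough $\lambda$---exactly the argument used in the proof of Lemma~\ref{lem: decoupling_err}.
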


\noindent \underline{\bf Step 2: basic inequality.} 
By definition of $\wht b_N$, we have $L_T(\wht b_N) \geq L_T(b^\ast)$ and thus
\begin{align*}
  & \sum_{i=1}^N\int_0^T\big\langle (\wht b_N - b^\ast)( \mu_t^N, X_t^i), \, b^\ast(\mu_t, X_t^i)\,\dd t + \dd\overline{W}_t^i\big\rangle \\
   &\qquad\qquad\qquad\qquad-\frac{1}{2}\sum_{i=1}^N\int_0^T \Big\{\|\wht b_N(\mu_t^N, X_t^i)\|^2 - \|b^\ast(\mu_t^N, X_t^i)\|^2 \Big\}\,\dd t \geq 0.
\end{align*}
Denote $\wht\Delta_N = \wht b_N - b^\ast$, we can derive the basic inequality
\begin{align}\label{eqn: basic_ineq}
\begin{aligned}
    \frac{1}{N}\sum_{i=1}^N\int_0^T\big\langle\wht \Delta_N( \mu_t^N, X_t^i),\,\dd \overline{W}_t^i\big\rangle 
    &\geq \frac{1}{2N}\sum_{i=1}^N\int_0^T\|\wht b_N(\mu_t^N, X_t^i) - b^\ast(\mu_t, X_t^i)\|^2\,\dd t\\ 
    &\qquad- \frac{1}{2N}\sum_{i=1}^N\int_0^T\|b^\ast(\mu_t^N, X_t^i) - b^\ast(\mu_t, X_t^i)\|^2\,\dd t.
\end{aligned}
\end{align}
Furthermore, by noticing that
\begin{align*}
    &\quad\,\,\, \wht b_N(\mu_t^N, X_t^i) - b^\ast(\mu_t, X_t^i)\\
    &= \Big[\wht\Delta_N(\mu_t^N, X_t^i) - \wht\Delta_N(\mu_t, X_t^i)\Big] + \Big[b^\ast(\mu_t^N, X_t^i) - b^\ast(\mu_t, X_t^i)\Big] + \Big[\wht b_N(\mu_t, X_t^i) - b^\ast(\mu_t, X_t^i)\Big],
\end{align*}
we can obtain by using the Cauchy-Schwartz inequality and basic inequality~\eqref{eqn: basic_ineq} that
\begin{align}\label{eqn: new_basic_ineq}
\begin{aligned}
    \frac{1}{N}\sum_{i=1}^N\int_0^T\big\langle\wht\Delta_N(\mu_t, X_t^i),\,\dd \overline{W}_t^i\big\rangle
    &\geq \frac{1}{6N}\sum_{i=1}^N\int_0^T\Big\|\wht\Delta_N(\mu_t, X_t^i)\Big\|^2\,\dd t\\
    &\quad - \frac{1}{N}\sum_{i=1}^N\int_0^T\Big\|b^\ast( \mu_t^N, X_t^i) - b^\ast(\mu_t, X_t^i)\Big\|^2\,\dd t\\
    &\quad - \frac{1}{2N}\sum_{i=1}^N\int_0^T\Big\|\wht\Delta_N(\mu_t^N, X_t^i) - \wht\Delta_N(\mu_t, X_t^i)\Big\|^2\,\dd t\\
    &\quad - \frac{1}{N}\sum_{i=1}^N\int_0^T\big\langle\wht\Delta_N(\mu_t^N, X_t^i) - \wht\Delta_N(\mu_t, X_t^i),\,\dd\overline{W}_t^i\big\rangle.
\end{aligned}
\end{align}
We will call the last three terms on the right hand side of the above display as \emph{decoupling errors} since they characterize the degree of dependence among the particle trajectories due to the presence of empirical law $\mu_t^N$ in the drift term $b^\ast$ of SDE~\eqref{eqn: interact_system}.
To bound the left-hand side of~\eqref{eqn: new_basic_ineq}, we need the following Lemma~\ref{lem: order_of_ito}.
\begin{lemma}[Localization]\label{lem: order_of_ito}
For the star-shaped set $\m H^\ast$, define
\begin{align*}
    \m A(u) = \bigg\{\exists\, g\in\m H^\ast, \norm{g}_E \geq u: \bigg|\frac{1}{N}\sum_{i=1}^N\int_0^T\big\langle g(\mu_t, X_t^i),\,\dd \overline{W}_t^i\big\rangle\bigg| \geq 4u\norm{g}_E\bigg\},
\end{align*}
and critical radius $\delta_N$ as the smallest number $u > 0$ such that
\begin{align*}
    \mb E_{\barP}\sup_{g\in\m H^\ast_{u}}\bigg|\frac{1}{N}\sum_{i=1}^N\int_0^T\big\langle g(\mu_t, X_t^i),\,\dd\overline{W}_t^i\big\rangle\bigg| \leq 2u^2.
\end{align*}
Recall $H(\varepsilon, \m H^\ast_u)$ is the cardinality of the smallest set $S\subset \m H^\ast_u$, such that $\forall\,g\in\m H_u^\ast$, there exists $f\in S$ satisfying $\norm{f-g}_E \leq \varepsilon u$ and $\norm{f-g}_\infty \leq \varepsilon B$. Then, for every $u\geq \delta_N$ we have
\begin{align*}
    \barP\big(\m A(u)\big) \leq \exp\bigg\{-\frac{N\delta_N^2}{3}\bigg\} + 3\exp\bigg\{-\frac{Nu\delta_N}{C\log NJ_1(1)}\bigg\}.
\end{align*}
\end{lemma}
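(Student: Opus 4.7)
The plan is to reduce the unbounded event $\m A(u)$ to a tail bound on $Z(u) := \sup_{g \in \m H^\ast_u} |M(g)|$, where $M(g) := N^{-1}\sum_{i=1}^N \int_0^T \langle g(\mu_t, X_t^i), \dd\ovl W_t^i\rangle$, and then apply a concentration inequality tailored to suprema of i.i.d.\ It\^o integrals. The reduction uses the star-shaped property of $\m H^\ast$ twice. Given any $g \in \m H^\ast$ with $\|g\|_E \geq u$, setting $\alpha = u/\|g\|_E \in (0,1]$ yields $\alpha g \in \m H^\ast_u$ and $M(\alpha g) = \alpha M(g)$; hence $|M(g)| \geq 4u\|g\|_E$ forces $|M(\alpha g)| \geq 4u^2$, so $\m A(u) \subseteq \{Z(u) \geq 4u^2\}$. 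Applied a second time to $v \geq u$ and any $g \in \m H^\ast_v$, the map $g \mapsto (u/v)g$ sends $\m H^\ast_v$ into $\m H^\ast_u$, giving $Z(v) \leq (v/u) Z(u)$, so $\mb E_{\barP} Z(u)/u$ is non-increasing in $u$. Combined with the defining inequality $\mb E_{\barP} Z(\delta_N) \leq 2\delta_N^2$, this forces $\mb E_{\barP} Z(u) \leq 2u\delta_N$ for every $u \geq \delta_N$, hence $4u^2 - \mb E_{\barP} Z(u) \geq 2u\delta_N$ on $\{Z(u) \geq 4u^2\}$.

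The heart of the proof is concentration of $Z(u)$ around its mean. Under $\barP$ the trajectories $(X^i_t)_{t\leq T}$ are i.i.d., and conditionally on them each summand $\xi_i(g) := \int_0^T \langle g(\mu_t, X_t^i), \dd\ovl W_t^i\rangle$ is centered Gaussian with conditional variance $\int_0^T\|g(\mu_t, X_t^i)\|^2\dd t$, bounded pointwise by $TB^2$ but with unconditional mean only $\|g\|_E^2 \leq u^2$. This two-scale variance profile is precisely what the entropy integral $J_1(u)$ is designed to exploit: the $\sqrt{T}B$-piece controls the $\psi_1$ (sub-exponential) chaining increments coming from the sup-norm bound on the integrand, and the $u$-piece controls the $\psi_2$ (sub-Gaussian) chaining increments coming from the $L^2$-radius. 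Accordingly, I would truncate $\xi_i(g)$ at a level proportional to $\log N$ (absorbing the unbounded Gaussian tails at the cost of the $\log N$ factor in the denominator of the exponent), apply a Bousquet/Talagrand-type concentration to the bounded truncated empirical process, and chain over the combined metric $\max(\|\cdot\|_E, B\|\cdot\|_\infty)$ implicit in the definition of $H(\varepsilon, \m H^\ast_u)$. Substituting $t = 4u^2 - \mb E_{\barP} Z(u) \geq 2u\delta_N$ into the resulting Bernstein-type bound
\[
\barP\bigl(Z(u) - \mb E_{\barP}Z(u) \geq t\bigr) \leq \exp\bigl\{-\tfrac{Nt^2}{C u^2}\bigr\} + \exp\bigl\{-\tfrac{Nt}{C\log N \cdot J_1(1)}\bigr\}
\]
produces the two exponential terms in the stated probability, after noting that $t^2/u^2 \geq \delta_N^2$ absorbs the $u$-dependence in the first exponent.

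The main obstacle is precisely the unboundedness of the It\^o integrals: the classical Talagrand/Bousquet machinery is stated for bounded empirical processes, while here each $\xi_i(g)$ is only sub-Gaussian conditionally, with a path-dependent conditional variance. I would handle this by a careful two-scale truncation separating the variance contribution at scale $u/\sqrt{N}$ from the sup-norm contribution at scale $B\sqrt{T}/N$, controlling the residual tail via the conditional normality of $\xi_i(g)$ and a maximal inequality of Bernstein-Orlicz type. A secondary technicality is reducing $J_1(u)$ to $J_1(1)$ in the final exponent, which I would address by exploiting the $u$-monotonicity of the entropy integrals that again follows from the star-shaped property of $\m H^\ast$.
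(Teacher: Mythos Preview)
Your structural steps match the paper: the star-shaped reduction $\m A(u) \subseteq \{Z(u) \geq 4u^2\}$ and the monotonicity of $u\mapsto \mb E_{\barP}Z(u)/u$ yielding $\mb E_{\barP}Z(u) \leq 2u\delta_N$ are exactly what the paper does. The concentration step differs. The paper does not truncate or invoke Bousquet/Talagrand; it applies Adamczak's deviation inequality for suprema of \emph{unbounded} empirical processes (Theorem~4 in \cite{adamczak2008tail}) directly, which already delivers the two-term Bernstein tail with $\sigma^2\leq u^2/N$ in the sub-Gaussian piece and $\bigl\|\max_i\sup_{g\in\m H^\ast_u} N^{-1}\int_0^T\langle g,\dd\ovl W^i\rangle\bigr\|_{\psi_1}$ in the sub-exponential piece. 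The $\log N$ then arises from the $\psi_1$ maximal inequality $\|\max_{i\leq N} Y_i\|_{\psi_1} \lesssim (\log N)\max_i\|Y_i\|_{\psi_1}$, and $J_1(1)$ from a \emph{single-trajectory} $\psi_1$ chaining bound (Lemma~\ref{lem: psi1_chaining} with $N=1$); taking $t=u\delta_N$ finishes. Your truncation-plus-Talagrand route is in spirit a hand-rolled version of Adamczak's result and can be completed, but the paper's path is shorter and gets the constants more transparently. One point to correct: $\xi_i(g)=\int_0^T\langle g(\mu_t,X_t^i),\dd\ovl W_t^i\rangle$ is \emph{not} conditionally Gaussian given the trajectory $(X^i_t)_{t\leq T}$, because $X^i$ is adapted to the filtration of $\ovl W^i$ and the two cannot be decoupled by conditioning. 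The correct characterization (Lemma~\ref{lem: sub_exp_increments}, via the exponential-martingale identity and Cauchy--Schwarz) is that the increments $Y_f-Y_g$ are sub-exponential with Bernstein parameters $(\|f-g\|_E,\sqrt{T/2N}\,\|f-g\|_\infty)$; this is precisely why the $\psi_1$ (not $\psi_2$) chaining and the mixed entropy $H(\varepsilon,\m H^\ast_u)$ are needed.
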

Thus, on the event $\m A(\delta_N)^c$ where $\delta_N$ is the critical radius of our nonparametric estimation problem, we have
\begin{equation}\label{eqn:final_basic_inequality}
    \frac{1}{6N}\sum_{i=1}^N\int_0^T\Big\|\wht\Delta_N(\mu_t, X_t^i)\Big\|^2\,\dd t \leq 4 \delta_N \|\hat\Delta_N\|_E + T_1 + 0.5 T_2 + T_3,
\end{equation}
where the three error terms
\begin{eqnarray*}
T_1 &=& \frac{1}{N}\sum_{i=1}^N\int_0^T\Big\|b^\ast( \mu_t^N, X_t^i) - b^\ast(\mu_t, X_t^i)\Big\|^2\,\dd t, \\
T_2 &=& \sup_{g \in \m H^*} \frac{1}{N}\sum_{i=1}^N\int_0^T\Big\|g( \mu_t^N, X_t^i) - g(\mu_t, X_t^i)\Big\|^2\,\dd t, \\
T_3 &=& \frac{1}{N}\sum_{i=1}^N\int_0^T\big\langle\wht\Delta_N(\mu_t^N, X_t^i) - \wht\Delta_N(\mu_t, X_t^i),\,\dd\overline{W}_t^i\big\rangle.
\end{eqnarray*}
are expected to decay to zero as $N\to\infty$ since $\mu_t^N$ is expected to be close to $\mu_t$. However, a rigorous analysis of these three error terms requires substantial efforts due to their complicated structures and the need of a uniform control over $\m H^*$, which will be sketched below.

\noindent \underline{\bf Step 3: bound $T_1$.} To bound $T_1$, we use the following lemma. Recall that $|\!|f|\!|_{\textrm{Lip}}$ denotes the Lipschitz constant of $f$.

\begin{lemma}[Decoupling error bound]\label{lem: decoupling_err}
For any $u > 0$, $N\geq 2$ and $g\in\m H^\ast$, we have
\begin{align*}
    \barP\bigg(\frac{1}{N}\sum_{i=1}^N\int_0^T\big|\!\big|g(\mu_t, X_t^i) - g(\mu_t^N, X_t^i)\big|\!\big|^2\,\dd t > u^2\bigg) \leq 2e^{-\frac{Nu^2}{4TK_1|\!|\tilde{g}|\!|^2_{\textrm{Lip}}}}.
\end{align*}
\end{lemma}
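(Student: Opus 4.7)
The plan is to exploit the change of measure from Step~1: under $\barP$ the trajectories $X^1,\dots,X^N$ are i.i.d.~copies of the mean-field SDE~\eqref{eqn: mean_field_SDE}, so that $S := \tfrac1N\sum_i\int_0^T\|g(\mu_t,X_t^i)-g(\mu_t^N,X_t^i)\|^2\,\dd t$ becomes a functional of i.i.d.~paths amenable to concentration-of-measure tools. Introducing the centered kernel $\varphi_t(x,y):=\tilde g(x,y)-\int \tilde g(x,y')\,\dd\mu_t(y')$, which satisfies $\|\varphi_t\|_\infty\leq c_0\|\tilde g\|_\Lip$ (using finite $\mathrm{diam}(\mb T^d)$) and $\mb E_{\barP}[\varphi_t(X_t^i,X_t^j)\mid X_t^i]=0$ for $j\ne i$, I have the identity
\[
g(\mu_t,X_t^i)-g(\mu_t^N,X_t^i) \;=\; -\frac{1}{N}\sum_{j=1}^N \varphi_t(X_t^i,X_t^j).
\]
Expanding the inner square and using the vanishing off-diagonal conditional moments, only diagonal $(j,k)$ terms survive and give the mean estimate $\mb E_{\barP}[S]\lesssim T\|\tilde g\|_\Lip^2/N$.

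Next I quantify the bounded-difference constants of $S$ under replacement of one path $X^k\mapsto \tilde X^k$. The $i=k$ outer summand can change by its full range $Tc_0^2\|\tilde g\|_\Lip^2$, contributing $Tc_0^2\|\tilde g\|_\Lip^2/N$ after the outer $1/N$; for each $i\ne k$ the empirical measure $\mu_t^N$ shifts by $N^{-1}(\delta_{\tilde X_t^k}-\delta_{X_t^k})$, and combining Lipschitzness of $\tilde g(X_t^i,\cdot)$ with $|a^2-b^2|\le(|a|+|b|)|a-b|$ yields a per-$i$ change of $O(T\|\tilde g\|_\Lip^2/N^2)$, hence an aggregate change of $O(T\|\tilde g\|_\Lip^2/N)$. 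Altogether the bounded-difference constants satisfy $c_k\lesssim T\|\tilde g\|_\Lip^2/N$.

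The main obstacle is promoting these ingredients into the sub-exponential-in-$u^2$ tail of the lemma, since a direct McDiarmid bound yields only $\exp(-Nu^4/(cT^2\|\tilde g\|_\Lip^4))$, which is strictly weaker than the stated $\exp(-Nu^2/(cT\|\tilde g\|_\Lip^2))$ whenever $u^2\lesssim T\|\tilde g\|_\Lip^2$ --- precisely the informative regime. To recover the correct scaling I apply a variance-aware bounded-differences inequality (Bousquet/Boucheron--Lugosi--Massart), $\mb P_{\barP}(S-\mb E S>t)\le \exp\!\bigl(-t^2/(2V+2ct/3)\bigr)$, whose sub-exponential regime $t\gtrsim V/c$ produces the desired $\exp(-Nt/(cT\|\tilde g\|_\Lip^2))$ upon substituting $t=u^2$. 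The delicate step is the sharp variance estimate $V:=\sum_k\mb E[\mathrm{Var}_{X^k}(S\mid X_{-k})]\lesssim T^2\|\tilde g\|_\Lip^4/N^2$: the Popoviciu bound $V\le\sum_k c_k^2/4$ loses a factor of $N$, so one has to expand $S$ as a V-statistic $N^{-3}\sum_{i,j,k}\int \varphi_t^T(X_t^i,X_t^j)\varphi_t(X_t^i,X_t^k)\,\dd t$ and classify contributions to $\mb E[S^2]$ by the coincidence pattern of the six indices, using the nested centering of $\varphi_t$ to discard all but the $O(N^{-2})$ configurations. Once $V$ is controlled at this rate, the resulting Bernstein-type bound matches the claimed $\exp(-Nu^2/(4TK_1\|\tilde g\|_\Lip^2))$ for $u^2\gtrsim \mb E[S]$, and is vacuous for smaller $u^2$ provided $K_1$ is chosen sufficiently large, yielding the lemma.
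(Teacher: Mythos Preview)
Your argument has a genuine gap at the concentration step. The Bernstein-type bounded-difference inequality you invoke, $\barP(S-\mb E S>t)\le\exp\bigl(-t^2/(2V+2ct/3)\bigr)$, does not hold for general functionals of independent variables with $V$ taken as the \emph{expected} sum of conditional variances $\sum_k\mb E[\mathrm{Var}_{X^k}(S\mid X_{-k})]$. The standard entropy-method inequalities of Boucheron--Lugosi--Massart require an \emph{almost-sure} bound on $\sum_k(S-\mb E_kS)^2$ (or a self-bounding structure), and the best almost-sure bound available here is precisely the Popoviciu estimate $\sum_k c_k^2\lesssim T^2\|\tilde g\|_\Lip^4/N$ that you already flagged as too weak: with it the sub-exponential regime of the Bernstein bound only kicks in at $u^2\gtrsim T\|\tilde g\|_\Lip^2$, leaving the entire informative range $T\|\tilde g\|_\Lip^2/N\lesssim u^2\lesssim T\|\tilde g\|_\Lip^2$ uncovered. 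The V-statistic expansion you sketch would deliver sharp \emph{moment} bounds on $\mb E[S^p]$, not an almost-sure variance proxy, so it does not feed into a bounded-difference Bernstein inequality; it points instead toward a direct moment/MGF argument.

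The paper does exactly that, and much more simply, bypassing bounded differences entirely. It bounds $\mb E_{\barP}\exp\{S/\lambda^2\}$ directly: Jensen's inequality pulls the time average outside the exponential, and then---since the factors $Y_i=\exp\{T\|g(\mu_t,X_t^i)-g(\mu_t^N,X_t^i)\|^2/(N\lambda^2)\}$ are identically (though not independently) distributed under $\barP$---H\"older's inequality gives $\mb E_{\barP}[Y_1\cdots Y_N]\le\mb E_{\barP}[Y_N^N]$, reducing the problem to a single particle. A Taylor expansion combined with the $2p$-th moment estimate $\mb E_{\barP}\|g(\mu_t,X_t^N)-g(\mu_t^N,X_t^N)\|^{2p}\le p!\,K_1^p\|\tilde g\|_\Lip^{2p}/(N-1)^p$ (Lemma~\ref{lem: bd_2pmoment}) then yields $\|\sqrt{S}\|_{\psi_2}\le 2\|\tilde g\|_\Lip\sqrt{TK_1/N}$, and the claimed sub-exponential tail for $S$ follows immediately from the definition of the $\psi_2$-norm.
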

Applying Lemma~\ref{lem: decoupling_err} with $g = b^*$ and $u^2 = \log(N)/N$, we can conclude that $T_1 \leq \frac{\log N}{N}$ holds with probability at least $1-2\exp\big(-{\log{n} \over 4 T K_1 \|\tilde{b^\ast}\|^2_{\textrm{Lip}}}\big)$.
Lemma~\ref{lem: decoupling_err} tells us that the decoupling error is sub-Gaussian with parameter of order $O(N^{-1})$, although the summands are not independent. In fact, we can decompose
\begin{align*}
    g(\mu_t^N, X_t^i) - g(\mu_t, X_t^i) 
    &= \frac{1}{N}\bigg[\tilde{g}(X_t^i, X_t^i) - \int_{\mb T^d}\tilde{g}(X_t^i, y)\,\dd\mu_t(y)\bigg]\\
    &\qquad\quad + \frac{1}{N}\sum_{j=1, j\neq i}^N\bigg[\tilde{g}(X_t^i, X_t^j) - \int_{\mb T^d}\tilde{g}(X_t^i, y)\,\dd\mu_t(y)\bigg].
\end{align*}
Notice that the second term above is a summation of $(N-1)$ i.i.d.~centered random variables under $\barP$ conditioning on $X_t^i$. This is where the sub-Gaussianity comes from. Intuitively, the second decoupling error $T_2$ (or third line of~\eqref{eqn: new_basic_ineq}) should have the same order $O(N^{-1})$ by some discretization or chaining arguments. This will be done in Lemma \ref{lem: maximal_ineq_decoupling_err}.

\noindent \underline{\bf Step 4: bound $T_2$.} To bound $T_2$, we use the following lemma.

\begin{lemma}[Uniform laws of dependent variables]\label{lem: maximal_ineq_decoupling_err}
We have
\begin{align*}
    \barP\bigg(\sup_{g\in\m H^\ast}\frac{1}{N}\sum_{i=1}^N\int_0^T\norm{g(\mu_t, X_t^i) - g(\mu_t^N, X_t^i)}^2\,\dd t > u\bigg) \leq 2e^{-\frac{Nu}{CLK_1J_2(L)}}
\end{align*}
for some constant $C > 0$. 
\end{lemma}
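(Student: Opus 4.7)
The key structural observation is that
\[
\sqrt{Z(g)} \;:=\; \bigg(\frac{1}{N}\sum_{i=1}^N \int_0^T \norm{g(\mu_t, X_t^i) - g(\mu_t^N, X_t^i)}^2 \, \dd t\bigg)^{1/2}
\]
defines a (random) seminorm on $\m H^*$ because $g \mapsto g(\mu_t, x) - g(\mu_t^N, x)$ is linear in $\tilde g$ and the $L^2$-norm obeys the triangle inequality. In particular, $\big|\sqrt{Z(g_1)} - \sqrt{Z(g_2)}\big| \leq \sqrt{Z(g_1 - g_2)}$ for all $g_1, g_2 \in \m H^*$. The plan is to apply Dudley's sub-Gaussian chaining in Orlicz ($\psi_2$) form to the process $\{\sqrt{Z(g)}\}_{g\in \m H^*}$, using Lemma~\ref{lem: decoupling_err} as the pointwise input. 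Since its proof depends on $g$ only through $\norm{\tilde g}_\Lip$, the lemma extends to arbitrary Lipschitz differences and yields $\norm{\sqrt{Z(g_1 - g_2)}}_{\psi_2} \lesssim \sqrt{TK_1/N}\,\norm{\tilde g_1 - \tilde g_2}_\Lip$, i.e., sub-Gaussian increments in the pseudo-metric $\norm{\tilde \cdot}_\Lip$.

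\textbf{Dudley chaining and squaring.} Applying Dudley's entropy inequality in $\psi_2$-form to this sub-Gaussian process, and using that the diameter of $\m H^*$ in $\norm{\tilde \cdot}_\Lip$ is at most $2L$ (from the uniform Lipschitz assumption on $\tilde{\m H}$), we obtain
\[
\bigg\| \sup_{g \in \m H^*} \sqrt{Z(g)} \bigg\|_{\psi_2} \;\lesssim\; \sqrt{\frac{TK_1}{N}} \int_0^L \sqrt{\log\big(1 + N(\varepsilon, \m H^*, \norm{\tilde \cdot}_\Lip)\big)} \, \dd \varepsilon.
\]
Cauchy--Schwarz bounds the integral by $\sqrt{L \int_0^L \log(1+N(\varepsilon))\,\dd \varepsilon} \lesssim \sqrt{L J_2(L)}$; the gap between the integration endpoint $L$ used here and the $L/2$ appearing in the definition of $J_2$ contributes only a constant, since the covering number at radius $L/2$ is already $O(1)$. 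Converting the resulting $\psi_2$-bound into a sub-Gaussian tail for $\sup_g\sqrt{Z(g)}$ and squaring yields
\[
\barP\bigg( \sup_{g \in \m H^*} Z(g) > u \bigg) \;\leq\; 2 \exp\bigg(-\frac{Nu}{C\, L K_1 J_2(L)}\bigg),
\]
as claimed, with $T$ absorbed into the constant $C$.

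\textbf{Main obstacle.} The chief technical point is verifying that the sub-Gaussian tail of Lemma~\ref{lem: decoupling_err} indeed applies to the difference $g_1 - g_2$, despite $\m H^*$ being only star-shaped and generally not closed under subtraction. This is handled by revisiting its proof: the key decomposition
\[
g(\mu_t^N, X_t^i) - g(\mu_t, X_t^i) = \tfrac{1}{N}\Big[\tilde g(X_t^i, X_t^i) - \int \tilde g(X_t^i, y)\, \dd \mu_t(y)\Big] + \tfrac{1}{N}\sum_{j \neq i}\Big[\tilde g(X_t^i, X_t^j) - \int \tilde g(X_t^i, y)\, \dd \mu_t(y)\Big]
\]
produces $(N-1)$ conditionally iid centered increments whose bounded-differences scale is controlled only by $\norm{\tilde g}_\Lip$, so the argument transfers verbatim to any Lipschitz function with the appropriate constant. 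A secondary concern---measurability of $\sup_{g \in \m H^*} Z(g)$ under $\barP$---is resolved by the countable dense subset furnished by Assumption~\ref{assump: pointwise_measurable}.
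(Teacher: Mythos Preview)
Your proof is correct and reaches the same bound; the route differs from the paper's only in how the chaining is organized. The paper chains the squared process $Z_g$ directly in $\psi_1$: it writes $|Z_f - Z_g| \leq \sqrt{Z_{f-g}}\,\sqrt{Z_{f+g}}$ via Cauchy--Schwarz, invokes the product inequality $\norm{XY}_{\psi_1} \leq \norm{X}_{\psi_2}\norm{Y}_{\psi_2}$ (Lemma~\ref{lem: orlicznorm_prod}) together with the $\psi_2$-bound~\eqref{eqn: orlicz2_Zprocess} on each factor to obtain $\norm{Z_f - Z_g}_{\psi_1} \lesssim (LK_1/N)\,\norm{\tilde f - \tilde g}_\Lip$, and then applies $\psi_1$-chaining, landing on $J_2(L)$ in one step. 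You instead chain $\sqrt{Z(g)}$ in $\psi_2$ via the seminorm triangle inequality, then pay an extra Cauchy--Schwarz to convert the Dudley integral $\int \sqrt{\log(1+N)}$ into $\sqrt{L\,J_2(L)}$ before squaring. Both arguments rest on the same pointwise input~\eqref{eqn: orlicz2_Zprocess}, so the difference is packaging: your version bypasses the polarization identity and the Orlicz-product lemma, while the paper's reaches $J_2$ without the Cauchy--Schwarz conversion. One small correction: your claim that $\int_0^L \log(1+N)\,\dd\varepsilon \lesssim J_2(L)$ should appeal to the monotonicity of the entropy (so that $\int_{L/2}^L \leq (L/2)\log(1+N(L/2)) \leq \int_0^{L/2}$) rather than to $N(L/2)$ being $O(1)$, which need not hold since the $\norm{\tilde\cdot}_\Lip$-diameter of $\m H^*$ can exceed $L$.
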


\begin{remark}
In order for $J_2(L)$ to be finite, we need functions in $\m H^\ast$ to have higher-order smoothness than just being Lipschitz continuous, so that the covering number with respect to $\norm{\tilde{\cdot}}_\Lip$ is finite.
\end{remark}

\noindent \underline{\bf Step 5: bound $T_3$.} The last decoupling error $T_3$ in~\eqref{eqn:final_basic_inequality} (or last line of~\eqref{eqn: new_basic_ineq}) is much more involved to control. This is because the Ito integral is expected to only have the order of the square root of its quadratic variation. Notice that for each $g(\nu, x) = \int_{\mb T^d}\tilde{g}(x, y)\,\dd\nu(y)$, we have
\begin{align}\label{eqn: decompose_Ito_term}
\begin{aligned}
    &\quad\,\frac{1}{N}\sum_{i=1}^N\int_0^T\big\langle g(\mu_t^N, X_t^i) - g(\mu_t, X_t^i),\,\dd\overline{W}_t^i\big\rangle\\
    &= \frac{1}{N^2}\sum_{i=1}^N\int_0^T\big\langle\xi_{i,i}^g(t),\,\dd\overline{W}_t^i\big\rangle + \frac{1}{N^2}\sum_{1\leq i\neq j\leq N} \int_0^T\big\langle\xi_{i,j}^g(t),\,\dd\overline{W}_t^i\big\rangle =: V_N(g) + U_N(g),
\end{aligned}
\end{align}
where we used the shorthand $\xi_{i,j}^g(t) = \tilde{g}(X_t^i, X_t^j) - \int_{\mb T^d}\tilde{g}(X_t^i, y)\,\dd\mu_t(y)$ for notation simplicity. The first term $V_N(g)$ in (\ref{eqn: decompose_Ito_term}) is a sum of i.i.d.~random variable, which is expected to have order $O(N^{-\frac{3}{2}})$. The second term $U_N(g)$ can be viewed as a U-statistics (after proper symmetrization), with kernel function
$g^\dagger(\overline{W}^i, \overline{W}^j) :=  \int_0^T\big\langle\xi_{i,j}^g(t),\,\dd\overline{W}_t^i\big\rangle$.
It is easy to verify that $\mb E_{\barP}\big[g^\dagger(\overline{W}^i, \overline{W}^j)\,|\,\overline{W}^i\big] = \mb E_{\barP}\big[g^\dagger(\overline{W}^i, \overline{W}^j)\,|\,\overline{W}^j\big] = 0.$ So $g^\dagger$ is degenerate (Definition 3.5.1 in \cite{de1999decoupling}), indicating that the U-statistics should have order around $O(N^{-1})$ asymptotically (see Chapter 3 of \cite{lee1990u}). Above discussions can be rigorously stated as Lemma \ref{lem: diagonal_elements} and Lemma \ref{lem: U_statistics} under a non-asymptotic setting. Then the last term $T_3$ in~\eqref{eqn:final_basic_inequality} can be bounded by the following lemma, which gives an optimal upper bound $O(N^{-1})$ of the decoupling error up to some log factor.
\begin{lemma}[Uniform laws of dependent It\^o integrals]\label{lem: decoupling_err_Ito}
We have
\begin{align*}
    \sup_{g\in\m H^\ast}\frac{1}{N}\sum_{i=1}^N\int_0^T\big\langle g(\mu_t^N, X_t^i) - g(\mu_t, X_t^i),\,\dd\overline{W}_t^i\big\rangle \leq \frac{2\log N}{N}
\end{align*}
with probability at least
\begin{align*}
    1 - 2\exp\bigg\{-\frac{N(\log N)^2}{CJ_3^2(B)}\bigg\} - 2\exp\bigg\{-\Big(\frac{\log N}{CJ_4(B)\log\log N}\Big)^{\frac{2}{3}}\bigg\}.
\end{align*}
\end{lemma}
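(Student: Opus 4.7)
The plan is to exploit the decomposition $V_N(g) + U_N(g)$ of equation~\eqref{eqn: decompose_Ito_term}, bound the supremum of each piece separately by chaining over $\m H^\ast$ in the $\norm{\cdot}_\infty$-metric, and then take a union bound at the threshold $u = \log N/N$. The two tail terms in the stated lower bound correspond respectively to the sub-exponential chaining bound for the i.i.d.\ ``diagonal'' part $V_N(g)$ and to a $\psi_{2/3}$-Orlicz chaining bound for the degenerate $U$-process $U_N(g)$.

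First, I would handle $\sup_{g\in\m H^\ast}|V_N(g)|$. For a fixed $g$, the summands $N^{-2}\int_0^T\langle\xi^g_{i,i}(t),\dd\overline{W}_t^i\rangle$ are i.i.d.\ centered It\^o integrals whose quadratic variations are bounded in terms of $\norm{g}_\infty\leq B$ and $T$. By Lemma~\ref{lem: diagonal_elements} each $V_N(g)$ is sub-exponential with $\psi_1$-norm of order $B\sqrt{T/N^3}$, and the increment $V_N(g_1)-V_N(g_2)$ inherits a $\psi_1$-norm controlled by $\norm{g_1-g_2}_\infty\sqrt{T/N^3}$. Applying $\psi_1$-chaining (Lemma~\ref{lem: psi1_chaining}) with respect to $\norm{\cdot}_\infty$ and inverting the Bernstein-type tail above the transition scale yields $\barP(\sup_g|V_N(g)|>\log N/(2N))\leq 2\exp\{-N(\log N)^2/(CJ_3^2(B))\}$, which is the first tail term in the conclusion.

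Next, I would bound $\sup_{g\in\m H^\ast}|U_N(g)|$, the more delicate piece. After symmetrization in $(i,j)$, this is a $\barP$-canonical (degenerate) $U$-statistic of order two whose kernel $g^\dagger(\overline{W}^i,\overline{W}^j)=\int_0^T\langle\xi^g_{i,j}(t),\dd\overline{W}_t^i\rangle$ is itself an It\^o integral. The degeneracy checked in the paragraph preceding the lemma lets me invoke the concentration inequality of Lemma~\ref{lem: U_statistics}, which supplies a sub-Weibull tail $\barP(|U_N(g)|>t)\lesssim\exp\{-(Nt/C\norm{g}_\infty^2)^{2/3}\}$; equivalently $\norm{U_N(g)}_{\psi_{2/3}}\lesssim\norm{g}_\infty^2/N$, and the analogous estimate holds for increments with $\norm{g_1-g_2}_\infty$ in place of $\norm{g}_\infty$. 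Generic chaining in the $\psi_{2/3}$ quasi-norm then converts metric entropy into $[\log(1+N(\varepsilon,\m H^\ast,\norm{\cdot}_\infty))]^{3/2}$ (because $\psi_{2/3}^{-1}(t)=(\log(1+t))^{3/2}$), so the chaining integral is exactly $J_4(B)/N$; inverting the Orlicz tail at level $\log N/(2N)$ produces the factor $\log\log N$ from summing over dyadic chaining scales in a non-Gaussian Orlicz norm and yields the second tail term $2\exp\{-(\log N/(CJ_4(B)\log\log N))^{2/3}\}$. A union bound over the two contributions completes the argument.

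The hard part will be the uniform control of $U_N(g)$. A direct application of a generic $U$-process maximal inequality (e.g.\ those in~\cite{de1999decoupling}) would treat $g^\dagger$ as an arbitrary kernel and deliver only the rate $O(N^{-1/2})$, destroying the optimal bound. To recover $O(\log N/N)$ one must simultaneously exploit two structures: the $\barP$-canonicity of $g^\dagger$, which brings the variance of $U_N(g)$ down to order $N^{-2}$, and the It\^o-integral form of the kernel, which produces the second-order-Wiener-chaos $\psi_{2/3}$ concentration in Lemma~\ref{lem: U_statistics}. Carrying out chaining in this non-Gaussian Orlicz norm against the $\norm{\cdot}_\infty$ entropy --- rather than against, say, a weaker $L^2$-type metric --- is what produces the integral $J_4(B)$ (with the heavier exponent $3/2$) and the $\log\log N$ correction, and this interplay between canonical structure, It\^o-integral kernels, and sub-Weibull chaining is where the main technical effort is concentrated.
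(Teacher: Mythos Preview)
Your overall strategy matches the paper's proof: decompose via~\eqref{eqn: decompose_Ito_term}, chain each piece in the $\norm{\cdot}_\infty$-metric, and union-bound at $u=\log N/N$. However, several technical details are misstated and would not survive a careful write-up.

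First, for the diagonal term $V_N(g)$, Lemma~\ref{lem: diagonal_elements} gives a \emph{sub-Gaussian} bound $\norm{V_N(g)}_{\psi_2}\lesssim B/N^{3/2}$, not merely sub-exponential; the paper therefore runs standard $\psi_2$-chaining, which produces precisely the entropy integral $J_3(B)=\int_0^{B/2}\sqrt{\log(1+N(\varepsilon,\m H^\ast,\norm{\cdot}_\infty))}\,\dd\varepsilon$ and the tail $2\exp\{-N^3u^2/(CJ_3^2(B))\}$. Invoking Lemma~\ref{lem: psi1_chaining} as you do is the wrong tool (that lemma is tailored to the process $Y_g$ and yields a mixed $\log$/$\sqrt{\log}$ integral), and $\psi_1$-chaining would not deliver $J_3^2(B)$ in the exponent; your stated tail bound is only consistent with the $\psi_2$ route.

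Second, the $\log\log N$ factor does \emph{not} arise from chaining. In the paper it is already present in the single-function concentration $\norm{U_N(g)}_{\psi_{2/3}}\lesssim B\log\log N/N$ of Lemma~\ref{lem: U_statistics} (linear in $B$, not $\norm{g}_\infty^2$), coming from the optimization of the moment index $p$ against the residual $N^{1-1/p}$ term. The chaining step then contributes only the factor $J_4(B)$.

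Third, you glossed over the fact that $\psi_{2/3}$ is only a quasi-norm, so the usual chaining argument does not apply directly. The paper handles this by passing to an equivalent convex Young function $\tilde\psi_{2/3}$ (via Lemma~C.2 of \cite{chen2019randomized}) before chaining, and then uses $\tilde\psi_{2/3}^{-1}\lesssim[\log(1+\cdot)]^{3/2}$ to recover $J_4(B)$.
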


\noindent \underline{\bf Step 6: conclude.} To finish the bound for the estimation error under the $\|\cdot\|_E$ norm, we need the following norm equivalence between $\|\cdot\|_E$ and its empirical version $\|\cdot\|_X$.
\begin{lemma}[Equivalence of norms]\label{lem: equivalence_norm}
There is a constant $C > 0$ such that
\begin{align*}
    \barP\bigg(\sup_{g\in\m H^\ast}\Big|\norm{g}_X^2 - \norm{g}_E^2\Big| > \frac{1}{2}\norm{g}_E^2 + \frac{u^2}{2}\bigg) < \exp\bigg\{-\frac{Nu^2}{36C^2B^2T}\bigg\}
\end{align*}
for all $u \geq r_N$. Here $r_N > 0$ is a constant satisfying $J_5(r_N) \leq \frac{\sqrt{N}r_N^2}{6CB\sqrt{T}}.$
\end{lemma}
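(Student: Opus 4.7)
The strategy is to (i) re-express the quantity of interest as a standard iid empirical process under $\barP$, (ii) peel along $\|\cdot\|_E$-shells to convert the multiplicative-plus-additive deviation into a sequence of fixed-radius suprema, (iii) concentrate each fixed-radius supremum via Bousquet's refinement of Talagrand's inequality, and (iv) bound the expected supremum by Dudley chaining, matching $J_5$ through the elementary inequality $\|g\|_E\le\sqrt T\,\|g\|_\infty$.

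Under $\barP$ the system~\eqref{eqn: mean_field_SDE} yields iid trajectories with $X_t^i\sim\mu_t$, so $\|g\|_X^2-\|g\|_E^2=N^{-1}\sum_{i=1}^N Y_i(g)$ with $Y_i(g):=\int_0^T\|g(\mu_t,X_t^i)\|^2\,\dd t-\|g\|_E^2$ iid, centered, bounded by some $c_0B^2T$ (since $\|g\|_\infty\lesssim B$ on $\m H^*$), and with variance $\le c_0B^2T\|g\|_E^2$ by Cauchy--Schwarz. Decompose $\m H^*=A_0\cup\bigcup_{k\ge 1}A_k$ via $A_0=\{\|g\|_E\le u\}$ and $A_k=\{2^{k-1}u<\|g\|_E\le 2^ku\}$. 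If one can establish, for every $r\ge u$,
\begin{equation*}
\barP\Big(\sup_{g\in\m H^*,\,\|g\|_E\le r}\big|\|g\|_X^2-\|g\|_E^2\big|>\tfrac{r^2}{8}\Big)\le\exp\!\big(\!-\tfrac{Nr^2}{cB^2T}\big),
\end{equation*}
then on $A_k$ with $r=2^ku$ the RHS is bounded by $\tfrac12\|g\|_E^2+\tfrac12u^2$, and summing the geometric tail $\sum_k\exp(-N4^ku^2/(cB^2T))$ over $k$ gives the target form (with $c$ absorbed into $36C^2$).

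For fixed $r\ge u$ let $W_r$ denote that supremum. Bousquet's inequality with envelope $c_0B^2T$ and weak variance $c_0B^2Tr^2$ yields $\barP(W_r\ge \mb EW_r+t)\le\exp(-Nt^2/(c_1B^2Tr^2+c_2B^2Tt))$, so taking $t=r^2/16$ reduces the claim to $\mb EW_r\le r^2/16$. Using $|\|a\|^2-\|b\|^2|\le 4B\|a-b\|$, Cauchy--Schwarz gives $\mathrm{Var}(Y_i(g_1)-Y_i(g_2))\lesssim B^2T\|g_1-g_2\|_E^2$, so the Rademacher-symmetrized process has sub-Gaussian increments in the metric $d\lesssim B\sqrt{T/N}\,\|g_1-g_2\|_E$. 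Dudley's entropy integral combined with $\|g\|_E\le\sqrt T\|g\|_\infty$ then yields
\begin{equation*}
\mb EW_r\lesssim \tfrac{B\sqrt T}{\sqrt N}\!\int_0^{2r}\!\!\sqrt{\log N(s,\m H^*,\|\cdot\|_E)}\,\dd s\;\le\;\tfrac{B\sqrt T}{\sqrt N}\!\int_0^{2r}\!\!\sqrt{\log N(s/\sqrt T,\m H^*,\|\cdot\|_\infty)}\,\dd s,
\end{equation*}
which is a constant multiple of $J_5(r)$. Because $\log N(\cdot,\m H^*,\|\cdot\|_\infty)$ is non-increasing in the scale, $rJ_5'(r)\le J_5(r)$, hence $r\mapsto J_5(r)/r$ is non-increasing; the hypothesis $J_5(r_N)\le \sqrt Nr_N^2/(6CB\sqrt T)$ therefore propagates to $J_5(r)\le \sqrt Nr^2/(6CB\sqrt T)$ for all $r\ge r_N$, giving $\mb EW_r\le r^2/16$ once $C$ is taken large enough to absorb the chaining constant.

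\textbf{Main obstacle.} The delicate step is the chaining in (iv): the natural sub-Gaussian metric from the variance bound involves $\|\cdot\|_E$, whereas the only covering made available in the hypothesis is with respect to $\|\cdot\|_\infty$. Propagating $\|g\|_E\le\sqrt T\|g\|_\infty$ through Dudley is what produces the precise $s/\sqrt T$ argument inside $J_5$, and one must track absolute constants jointly between Bousquet in (iii) and chaining in (iv) so that the final exponent matches $Nu^2/(36C^2B^2T)$. A secondary subtlety is extending the critical-radius condition from $r=r_N$ to all $r\ge r_N$, which rests on the monotonicity of $J_5(r)/r$ together with the star-shapedness of $\m H^*$.
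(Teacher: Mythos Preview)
Your outline is sound and would prove the lemma, but it differs from the paper's argument in two places.

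\emph{Localization.} The paper does not peel over dyadic $\|\cdot\|_E$-shells. It uses star-shapedness in a single reduction: if some $g$ with $\|g\|_E>u$ satisfies $|\|g\|_X^2-\|g\|_E^2|>\tfrac12\|g\|_E^2+\tfrac12u^2$, rescale to $h=(u/\|g\|_E)\,g\in\m H_u^*$ and obtain $|\|h\|_X^2-\|h\|_E^2|>u^2/2$; thus the whole event is contained in $\{\sup_{g\in\m H_u^*}|\|g\|_X^2-\|g\|_E^2|>u^2/2\}$. This avoids your union bound over $k$ and the geometric-series cleanup. After that the paper applies Talagrand's inequality (Massart 2000) rather than Bousquet, but this is cosmetic.

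\emph{Chaining metric.} Your step (iv) asserts sub-Gaussian increments of the symmetrized process in the deterministic metric $B\sqrt{T/N}\,\|g_1-g_2\|_E$, deduced from the variance bound. That is not quite justified: conditionally on the sample, the Rademacher process has sub-Gaussian increments in the \emph{random} metric $\|\cdot\|_X$, and the variance bound only controls its expectation. The paper handles exactly this point: it chains conditionally in $\|\cdot\|_X$ (then uses $\|\cdot\|_X\le\sqrt T\,\|\cdot\|_\infty$) up to the random diameter $\widehat R_u=\sup_{g\in\m H_u^*}\|g\|_X$, invokes concavity of $z\mapsto J_5(\sqrt z)$ to pass to $J_5\big((\mb E\widehat R_u^2)^{1/2}\big)$, and closes with a self-bounding step (Lemma~2.1 in \cite{van2011local}) to replace $\widehat R_u$ by $u$. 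Your approach can be repaired the same way, or via Bernstein-type mixed-tail chaining, but the step as written needs this extra ingredient.

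Both routes end at $\mb E\big[\sup_{\m H_u^*}\big]\lesssim B\sqrt{T/N}\,J_5(u)$ and then exploit the monotonicity of $J_5(r)/r$ (from $r\,J_5'(r)\le J_5(r)$, or equivalently star-shapedness) to carry the critical-radius condition from $r_N$ to all $u\ge r_N$, so the endgame is identical.
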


Return to the proof. Since $\big|\!\big|\wht \Delta_N\big|\!\big|_E > \delta_N$, we can obtain by combining all pieces with~\eqref{eqn:final_basic_inequality} that
\begin{align*}
    4\delta_N\big|\!\big|\wht\Delta_N\big|\!\big|_E
    \stackrel{\ri}{\geq}\frac{1}{6}\big|\!\big|\wht\Delta_N\big|\!\big|_X^2 - \frac{\log N}{N} - \frac{\log N}{2N} - \frac{2\log N}{N} \stackrel{\rii}{\geq} \frac{1}{12}\big|\!\big|\wht \Delta_N\big|\!\big|_E^2 - \frac{r_N^2}{2} - \frac{7\log N}{N}
\end{align*}
with at least probability given in~\eqref{eqn: tail_prob}. Here, (i) is by Lemma \ref{lem: order_of_ito} (taking $u=\delta_N$),  Lemma \ref{lem: decoupling_err} (taking $u^2 = \frac{\log N}{N}$), Lemma \ref{lem: maximal_ineq_decoupling_err} (taking $u = \frac{\log N}{N}$), and Lemma \ref{lem: decoupling_err_Ito}, and (ii) is by Lemma \ref{lem: equivalence_norm} (taking $u = r_N$). This implies the desired bound~\eqref{eqn: converge_of_b}.

\acks{Xiaohui Chen was partially supported by NSF CAREER Award DMS-1752614. Yun Yang was partially supported by NSF DMS-1907316.}

\bibliography{yourbibfile}

\newpage
\appendix
\begin{center}
{\bf\Large Appendix}
\end{center}

\section{Proof of Main Results}

\begin{proof}[Proof of Lemma \ref{lem: order_of_ito}]
Let
\begin{align*}
    \m Z_N(u) = \sup_{g\in\m H^\ast_u} \bigg|\frac{1}{N}\sum_{i=1}^N\int_0^T\big\langle g(\mu_t, X_t^i), \,\dd\overline{W}_t^i\big\rangle\bigg|    
\end{align*}
First, we shall show that
\begin{align}\label{eqn: reformulate_to_GaussComplex}
    \m A(u) \subset \big\{\m Z_N(u) \geq C_3u^2\big\}.
\end{align}
In fact, assume there is $g\in\m H^\ast$, such that $\norm{g}_E\geq u$ and
\begin{align}\label{eqn: gaussian_complexity1}
    \bigg|\frac{1}{N}\sum_{i=1}^N\int_0^T\big\langle g(\mu_t, X_t^i),\,\dd\overline{W}_t^i\big\rangle\bigg| \geq C_3u\norm{g}_E.
\end{align}
Let $h = \frac{u}{\norm{g}_E}g$. Since $\norm{g}_E\geq u$ and $\m H^\ast$ is star-shaped, we know $h\in\m H^\ast$ and $\norm{h}_E = u$. Then (\ref{eqn: gaussian_complexity1}) can be reformulated as
\begin{align*}
    \bigg|\frac{1}{N}\sum_{i=1}^N\int_0^T\big\langle h(\mu_t, X_t^i),\,\dd\overline{W}_t^i\big\rangle\bigg| \geq C_3u^2.
\end{align*}
Now we have finished (\ref{eqn: reformulate_to_GaussComplex}). By a deviation inequality (Theorem 4 in \cite{adamczak2008tail}),
\begin{align*}
    &\quad\,\barP\Big(\m Z_N(u) \geq 1.5\mb E_{\barP}\m Z_N(u) + t\Big)\\ 
    &\leq \exp\bigg\{-\frac{t^2}{3\sigma^2}\bigg\} + 3\exp\bigg\{-\frac{t}{C\big|\!\big|\max_i\sup_{g\in\m H^\ast_u}\frac{1}{N}\int_0^T\langle g(\mu_t, X_t^i),\dd\overline{W}_t^i\rangle\big|\!\big|}_{\psi_1}\bigg\},
\end{align*}
with
\begin{align*}
    \sigma^2 &= \sup_{g\in\m H^\ast_u}\frac{1}{N^2}\sum_{i=1}^N\mb E_{\barP}\bigg(\int_0^T\big\langle g(\mu_t, X_t^i),\,\dd\overline{W}_t^i\big\rangle\bigg)^2 \leq \sup_{g\in\m H^\ast_u} \frac{1}{N}\norm{g}_E^2 \leq \frac{u^2}{N}.
\end{align*}
Then take $t = u\delta_N$ in the above deviation inequality, and we have
\begin{align}
\begin{aligned}
    &\quad\,\barP\Big(\m Z_N(u) \geq 1.5\mb E_{\barP}\m Z_N(u) + u\delta_N\Big)\\
    &\leq \exp\bigg\{-\frac{N\delta_N^2}{3}\bigg\} + 3\exp\bigg\{-\frac{Nu\delta_N}{C\big|\!\big|\max_i\sup_{g\in\m H^\ast_u}\int_0^T\langle g(\mu_t, X_t^i),\dd\overline{W}_t^i\rangle\big|\!\big|}_{\psi_1}\bigg\}.
\end{aligned}
\end{align}
Notice that we have
\begin{align*}
    &\quad\,\norm{\max_{1\leq i\leq N}\sup_{g\in\m H^\ast_u}\int_0^T\big\langle g(\mu_t, X_t^i),\,\dd\overline{W}_t^i\big\rangle}_{\psi_1}\\ 
    &\stackrel{\ri}{\lesssim}\log N\max_{1\leq i\leq N}\norm{\sup_{g\in\m H^\ast_u}\int_0^T\big\langle g(\mu_t, X_t^i),\,\dd\overline{W}_t^i\big\rangle}_{\psi_1}\\
    &\leq \log N\bigg|\!\bigg| \sup_{g\in\m H^\ast_u}\int_0^T\big\langle g(\mu_t, X_t^i), \,\dd\overline{W}_t^i\big\rangle\bigg|\!\bigg|_{\psi_1}\\
    &\stackrel{\rii}{\lesssim} \log N\bigg[\sqrt{T}B\int_0^\frac{1}{2}\log\big(1 + H(\varepsilon, \m H^\ast)\big)\,\dd \varepsilon + \int_0^\frac{1}{2}\sqrt{\log\big(1 + H(\varepsilon,\m H^\ast_u)\big)}\,\dd \varepsilon\bigg]\\
    &= \log N \cdot J_1(1).
\end{align*}
Here (i) is by Lemma 8.2 in \cite{kosorok2008introduction}, and (ii) is by taking $N=1$ in Lemma \ref{lem: psi1_chaining}. Since $u\geq \delta_N$, by Lemma~\ref{lem: monotonicity} we know
\begin{align*}
    \frac{\mb E_{\barP}\m Z_N(u)}{u} \leq \frac{\mb E_{\barP}\m Z_N(\delta_N)}{\delta_N} \leq 2\delta_N,
\end{align*}
i.e. $\mb E_{\barP}\m Z_N(u) \leq 2u\delta_N$. Therefore,
\begin{align*}
    &\quad\,\barP\big(\m A(u)\big) \leq \barP\big(\m Z_N(u) \geq 4u^2\big) \leq \barP\big(\m Z_N(u) \geq 4u\delta_N\big)\\
    &\leq \exp\bigg\{-\frac{N\delta_N^2}{3}\bigg\} + 3\exp\bigg\{-\frac{Nu\delta_N}{C\log NJ_1(u)}\bigg\}.
\end{align*}
We finish the proof.
\end{proof}

\begin{proof}[Proof of Lemma \ref{lem: decoupling_err}]
Note that
\begin{align*}
    &\quad\, \mb E_{\barP}\exp\bigg\{\frac{1}{N\lambda^2}\sum_{i=1}^N\int_0^T\Big\|g(\mu_t, X_t^i) - g(\mu_t^N, X_t^i)\Big\|^2\,\dd t\bigg\}\\
    &\stackrel{\ri}{\leq} \mb E_{\barP}\int_0^T\exp\bigg\{\frac{T}{N\lambda^2}\sum_{i=1}^N\Big\|g(\mu_t, X_t^i) - g(\mu_t^N, X_t^i)\Big\|^2\bigg\}\frac{\dd t}{T}\\
    &\stackrel{\rii}{\leq} \int_0^T\mb E_{\barP}\exp\bigg\{\frac{T}{\lambda^2}\Big\|g(\mu_t, X_t^N) - g(\mu_t^N, X_t^N)\Big\|^2\bigg\}\frac{\dd t}{T}\\
    &\leq \sup_{0\leq t\leq T}\mb E_{\barP}\exp\bigg\{\frac{T}{\lambda^2}\Big\|g(\mu_t, X_t^N) - g(\mu_t^N, X_t^N)\Big\|^2\bigg\}.
\end{align*}
Here (i) is by Jensen's inequality, and (ii) is by H\"{o}lder's inequality for multivariables, which implies $\mb E_{\barP}Y_1\cdots Y_N \leq \mb E_{\barP}Y_N^N$, since $Y_i = \exp\big\{\frac{T}{N\lambda^2}\|g(\mu_t, X_t^i) - g(\mu_t^N, X_t^i)\|^2\big\}$ are identically distributed (no need to be independent) under $\barP$ for $i=1,2,\cdots, N$. By Taylor's expansion,
\begin{align*}
    &\quad\,\mb E_{\barP}\exp\bigg\{\frac{T}{\lambda^2}\Big\|g(\mu_t, X_t^N) - g(\mu_t^N, X_t^N)\Big\|^2\bigg\}\\
    &\leq 1 + \sum_{p\geq 1}\frac{T^p}{p!\lambda^{2p}}\mb E_{\barP}\Big\|g(\mu_t, X_t^N) - g(\mu_t^N, X_t^N)\Big\|^{2p}\\
    &\stackrel{\ri}{\leq} 1 + \sum_{p\geq 1} \frac{T^p}{p!\lambda^{2p}}\cdot \frac{p!K_1^p}{(N-1)^p}\|\tilde{g}\|_\Lip^{2p}\\
    &= \frac{1}{1 - \frac{TK_1\|\tilde{g}\|_\Lip^2}{(N-1)\lambda^2}}
\end{align*}
for all $\lambda^2 > TK_1\|\tilde{g}\|_\Lip^2/(N-1)$. Here (i) is by Lemma \ref{lem: bd_2pmoment}. Therefore, we get
\begin{align*}
    \mb E_{\barP}\exp\bigg\{\frac{1}{N\lambda^2}\sum_{i=1}^N\int_0^T\Big\|g(\mu_t, X_t^i) - g(\mu_t^N, X_t^i)\Big\|^2\,\dd t\bigg\} \leq \frac{1}{1 - \frac{TK_1\|\tilde{g}\|_\Lip^2}{(N-1)\lambda^2}}.
\end{align*}
This implies
\begin{align}\label{eqn: orlicz2_Zprocess}
    \norm{\bigg(\frac{1}{N}\sum_{i=1}^N\int_0^T\Big\|g(\mu_t, X_t^i) - g(\mu_t^N, X_t^i)\Big\|^2\,\dd t\bigg)^{\frac{1}{2}}}_{\psi_2} \leq 2\|\tilde{g}\|_\Lip\sqrt{\frac{TK_1}{N}}.
\end{align}
Therefore, 
\begin{align*}
    \barP\bigg(\frac{1}{N}\sum_{i=1}^N\int_0^T\big|\!\big|g(\mu_t, X_t^i) - g(\mu_t^N, X_t^i)\big|\!\big|^2\,\dd t > u^2\bigg) \leq 2e^{-\frac{Nu^2}{4TK_1|\!|\tilde{g}|\!|^2_{\textrm{Lip}}}}
\end{align*}
\end{proof}

\begin{proof}[Proof of Lemma \ref{lem: maximal_ineq_decoupling_err}]
For simplicity, let
\begin{align*}
    Z_g := \frac{1}{N}\sum_{i=1}^N\int_0^T\norm{g(\mu_t, X_t^i) - g(\mu_t^N, X_t^i)}^2\,\dd t.
\end{align*}
Notice that
\begin{align*}
    &\quad\,\big|Z_f - Z_g\big|\\ 
    &= \bigg|\frac{1}{N}\sum_{i=1}^N\int_0^T\big\langle(f-g)(\mu_t, X_t^i)-(f-g)(\mu_t^N, X_t^i), (f+g)(\mu_t, X_t^i)-(f+g)(\mu_t, X_t^i)\big\rangle\,\dd t\bigg|\\
    &\leq \sqrt{Z_{f-g}}\cdot\sqrt{Z_{f+g}}.
\end{align*}
Then, we have
\begin{align*}
    \norm{Z_f - Z_g}_{\psi_1} &\stackrel{\ri}{\leq} \norm{\sqrt{Z_{f-g}}}_{\psi_2}\cdot\norm{\sqrt{Z_{f+g}}}_{\psi_2}\\
    &\stackrel{\rii}{\lesssim} \norm{\tilde{f} - \tilde{g}}_{\textrm{Lip}}\sqrt{\frac{
    K_1}{N}}\cdot \norm{\tilde{f} + \tilde{g}}_{\textrm{Lip}}\sqrt{\frac{K_1}{N}}\\
    &\lesssim \frac{LK_1}{N}\norm{\tilde{f} - \tilde{g}}_{\textrm{Lip}}.
\end{align*}
Here, (i) is by Lemma \ref{lem: orlicznorm_prod} and (ii) is by (\ref{eqn: orlicz2_Zprocess}) in the proof of Lemma \ref{lem: decoupling_err}. Then, by the standard chaining argument, we know
\begin{align*}
    \norm{\sup_{g\in\m H^\ast} Z_g}_{\psi_1} \lesssim \frac{LK_1}{N}\int_0^{\frac{L}{2}}\log\big(1 + N(\varepsilon, \m H^\ast, \norm{\tilde{\cdot}}_{\textrm{Lip}})\big)\,\dd\varepsilon = \frac{LK_1}{N}J_2(L).
\end{align*}
Therefore, we have
\begin{align*}
    \barP\Big(\sup_{g\in\m H^\ast}Z_g > u\Big) \leq 2e^{-\frac{Nu}{CLK_1J_2(L)}}
\end{align*}
for some constant $C > 0$.
\end{proof}

\begin{proof}[Proof of Lemma \ref{lem: decoupling_err_Ito}]
Recall that
\begin{align*}
    &\quad\,\sup_{g\in\m H^\ast}\frac{1}{N}\sum_{i=1}^N\int_0^T\big\langle g(\mu_t^N, X_t^i) - g(\mu_t, X_t^i),\,\dd \overline{W}_t^i\big\rangle\\
    &\leq \sup_{g\in\m H^\ast}\frac{1}{N^2}\sum_{i=1}^N\int_0^T\big\langle\xi_{i,i}^g(t),\,\dd\overline{W}_t^i\big\rangle + \sup_{g\in\m H^\ast}\frac{1}{N^2}\sum_{1\leq i\neq j\leq N}\int_0^T\big\langle\xi_{i,j}^g(t)\,\dd\overline{W}_t^i\big\rangle\\
    &=\sup_{g\in\m H^\ast} V_N(g) + \sup_{g\in\m H^\ast} U_N(g).
\end{align*}
We will start from $\sup_{g\in\m H^\ast}V_N(g)$. By Lemma \ref{lem: diagonal_elements} and standard chaining argument, 
\begin{align*}
    \norm{\sup_{g\in\m H^\ast}V_N(g)}_{\psi_2} \lesssim \frac{1}{N\sqrt{N}}\int_0^\frac{B}{2}\sqrt{\log\big(1 + N(u, \m H^\ast, \norm{\cdot}_\infty)\big)}\,\dd u = \frac{J_3(B)}{N\sqrt{N}}.
\end{align*}
This implies that
\begin{align}\label{eqn: high_prob_bound_Vn}
    \barP\bigg(\sup_{g\in\m H^\ast}V_N(g) > u\bigg) \leq 2e^{-\frac{N^3u^2}{CJ_3^2(B)}}
\end{align}
for some constant $C$. Next, we will bound $\sup_{g\in\m H^\ast}U_N(g)$. By Lemma \ref{lem: U_statistics},
\begin{align*}
    \norm{U_N(g)}_{\psi_{2/3}} \lesssim \frac{B\log\log N}{N}.
\end{align*}
Notice that we cannot directly use chaining method with respect to $\psi_{\frac{2}{3}}$-norm, since $\psi_\beta$ is not convex when $0 < \beta<1$. Luckily, by Lemma C.2 in \cite{chen2019randomized}, there is a convex function $\tilde{\psi}_{\frac{2}{3}}$, such that $\norm{\cdot}_{\psi_{2/3}}$ and $\norm{\cdot}_{\tilde{\psi}_{2/3}}$ are equivalent. In fact, a possible construction is 
\begin{align*}
    \tilde{\psi}_{\frac{2}{3}}(x) = 
    \begin{cases}
        \frac{\psi_{2/3}(u)}{u}x & x\leq u\\
        \psi_{\frac{2}{3}}(x) & x > u,
    \end{cases}
\end{align*}
where $u$ is the unique solution to $3(1 - e^{-u}) = 2u$. Thus
\begin{align*}
    \norm{U_N(g)}_{\tilde{\psi}_{2/3}} \lesssim \frac{B\log\log N}{N}.
\end{align*}
We can then verify that $\tilde{\psi}_{2/3}^{-1} \lesssim \psi_{2/3}^{-1} = [\log(1+x)]^{3/2}$. Again, by a chaining argument, 
\begin{align*}
    \norm{\sup_{g\in\m H^\ast}U_N(g)}_{\psi_{2/3}} &\lesssim \norm{\sup_{g\in\m H^\ast}U_N(g)}_{\tilde{\psi}_{2/3}}\\ 
    &\lesssim \frac{\log\log N}{N}\int_0^{\frac{B}{2}} \tilde{\psi}_{\frac{2}{3}}^{-1}\big(N(u,\m H^\ast, \norm{\cdot}_\infty)\big)\,\dd u\\
    &\lesssim \frac{\log\log N}{N}\int_0^\frac{B}{2} \big[\log(1 + N(u,\m H^\ast, \norm{\cdot}_\infty))\big]^{\frac{3}{2}}\,\dd u\\
    &= \frac{\log\log N}{N}J_4(B).
\end{align*}
It implies that
\begin{align}\label{eqn: high_prob_bound_Un}
    \barP\bigg(\sup_{g\in\m H^\ast}V_N(g) > u\bigg) < 2\exp\bigg\{-\Big(\frac{Nu}{CJ_4(B)\log\log N}\Big)^{\frac{2}{3}}\bigg\}
\end{align}
for some constant $C>0$. Taking $u = \frac{\log N}{N}$ in (\ref{eqn: high_prob_bound_Vn}) and (\ref{eqn: high_prob_bound_Un}),  we have
\begin{align*}
    \sup_{g\in\m H^\ast} V_N(g) + \sup_{g\in\m H^\ast} U_N(g) \leq \frac{2\log N}{N}
\end{align*}
with probability at least
\begin{align*}
    1 - 2\exp\bigg\{-\frac{N(\log N)^2}{CJ_3^2(B)}\bigg\} - 2\exp\bigg\{-\Big(\frac{\log N}{CJ_4(B)\log\log N}\Big)^{\frac{2}{3}}\bigg\}.
\end{align*}
\end{proof}

\begin{proof}[Proof of Lemma \ref{lem: equivalence_norm}]
Recall that
\begin{align*}
    \norm{g}_X^2 = \frac{1}{N}\sum_{i=1}^N\int_0^T\norm{g(\mu_t, X_t^i)}^2\,\dd t, \qquad \norm{g}_E^2 = \mb E_{\barP}\norm{g}_X^2.
\end{align*}
First, we will show that
\begin{align}\label{eqn: prob_diff_xnorm_enorm}
    \barP\bigg(\exists\, g\in\m H^\ast: \Big|\norm{g}_X^2 - \norm{g}_E^2\Big| > \frac{1}{2}\norm{g}_E^2 + \frac{u^2}{2}\bigg) \leq \barP\bigg(\sup_{g\in\m H^\ast_u}\Big|\norm{g}_X^2 - \norm{g}_E^2\Big| > \frac{u^2}{2}\bigg).
\end{align}
In fact, let $g\in\m H^\ast$ such that
\begin{align*}
    \Big|\norm{g}_X^2 - \norm{g}_E^2\Big| > \frac{1}{2}\norm{g}_E^2 + \frac{u^2}{2}.
\end{align*}
If $\norm{g}_E \leq u$, then it is natural to have $\big|\norm{g}_X^2 - \norm{g}_E^2\big| > u^2/2.$ Otherwise, if $\norm{g}_E < u$, consider $h = \frac{u}{\norm{g}_E}\cdot g$. We know $h\in\m H^\ast$ since $\m H^\ast$ is star-shaped and $\norm{h}_E = u$, and
\begin{align*}
    \Big|\norm{h}_X^2 - \norm{h}_E^2\Big| = \frac{u^2}{\norm{g}_E^2}\Big|\norm{g}_X^2 - \norm{g}_E^2\Big| > \frac{u^2}{2}.
\end{align*}
Thus (\ref{eqn: prob_diff_xnorm_enorm}) is true, and we only need to bound the right-hand side. Since
\begin{align*}
    \bigg|\!\bigg| \frac{1}{N}\int_0^T\norm{g(\mu_t, X_t^i)}^2\,\dd t\bigg|\!\bigg|_\infty \leq \frac{16B^2T}{N},
\end{align*}
and
\begin{align*}
    \sup_{g\in\m H^\ast_u}\sum_{i=1}^N\mb V_{\barP}\bigg(\frac{1}{N}\int_0^T\norm{g(\mu_t, X_t^i)}^2\,\dd t\bigg)
    &\leq \sup_{g\in\m H^\ast_u} \sum_{i=1}^N \mb E_{\barP}\bigg(\frac{1}{N}\int_0^T\norm{g(\mu_t, X_t^i)}^2\,\dd t\bigg)^2\\
    &= \sup_{g\in\m H^\ast_u}N^{-1}\norm{g}_E^2\\
    &\leq \frac{4B^2Tu^2}{N}.
\end{align*}
By Talagrand's inequality~\citep{massart2000constants},
\begin{align}\label{eqn: Talagrand}
\begin{aligned}
    \barP\bigg(\sup_{g\in\m H^\ast_u}\Big|\norm{g}_X^2 - \norm{g}_E^2\Big| &\geq 2\mb E_{\barP}\sup_{g\in\m H^\ast_u}\Big|\norm{g}_X^2 - \norm{g}_E^2\Big| \\
    &\qquad\quad +\frac{4Bu\sqrt{2Tx}}{\sqrt{N}} + \frac{560B^2Tx}{N}\bigg) \leq e^{-x},
\end{aligned}
\end{align}
Now, let us bound the expectation term in the probability. By a standard symmetrization argument, it can be bounded by the Rademacher complexity of $(\m H^\ast_u)^2$, i.e.
\begin{align*}
    \mb E_{\barP} {\sup_{g\in\m H^\ast_u}}\Big|\norm{g}_X^2 - \norm{g}_E^2\Big| \leq 2\mb E_{\barP}\sup_{g\in\m H^\ast_u}\bigg|\frac{1}{N}\sum_{i=1}^N\varepsilon_i\int_0^T\norm{g(\mu_t, X_t^i)}^2\,\dd t\bigg|,
\end{align*}
where $\varepsilon_i$ are i.i.d.~Rademacher random variables. Let
\begin{align*}
    W_g := \frac{1}{\sqrt{N}}\sum_{i=1}^N\varepsilon_i\int_0^T\norm{g(\mu_t, X_t^i)}^2\,\dd t,
\end{align*}
and
\begin{align*}
    \wht R_u^2 := \sup_{g\in\m H^\ast_u}\frac{1}{N}\sum_{i=1}^N\int_0^T\norm{g(\mu_t, X_t^i)}^2\,\dd t = \sup_{g\in\m H^\ast_u}\norm{g}_X^2.
\end{align*}

First, we will show that
\begin{align}\label{eqn: bound_Wg_randomR}
    \mb E_{\barP}\sup_{g\in\m H^\ast_u}|W_g| \leq 32\sqrt{T}B J_5\Big(\sqrt{\mb E_{\barP}\wht R_u^2}\Big).
\end{align}
Here, $u$ indicates that we only consider the covering number of $\m H^\ast_u$ rather than $\m H^\ast$. Notice that for every $f$ and $g\in\m H^\ast_u$,
\begin{align*}
    \mb E_{\barP}^\varepsilon e^{\lambda(W_f - W_g)} &= \mb E_{\barP}^\varepsilon \exp\bigg\{\frac{\lambda}{\sqrt{N}}\sum_{i=1}^N\varepsilon_i\bigg(\int_0^T\norm{f(\mu_t, X_t^i)}^2 - \norm{g(\mu_t, X_t^i)}^2\,\dd t\bigg)\bigg\}\\
    &\leq \exp\bigg\{\frac{\lambda^2}{2N}\sum_{i=1}^N\bigg(\int_0^T\norm{f(\mu_t, X_t^i)}^2 - \norm{g(\mu_t, X_t^i)}^2\,\dd t\bigg)^2\bigg\}\\
    &\leq \exp\bigg\{\frac{\lambda^2}{2N}\sum_{i=1}^N\int_0^T\norm{(f-g)(\mu_t, X_t^i)}^2\,\dd t\cdot \int_0^T\norm{(f+g)(\mu_t, X_t^i)}^2\,\dd t\bigg\}\\
    &\leq \exp\bigg\{8T\norm{f-g}_X^2B^2\lambda^2\bigg\}.
\end{align*}
So $W_f - W_g$ is sub-Gaussian with parameter $16TB^2\norm{f-g}_X^2$. By a standard chaining argument, we have
\begin{align*}
    \mb E_{\barP}^\varepsilon\sup_{g\in\m H^\ast_u}|W_g| &\leq 32\sqrt{T}B\int_0^{\wht R_u/2}\sqrt{\log N(s, \m H^\ast_u, \norm{\cdot}_X)}\,\dd s\\
    &\leq 32\sqrt{T}B\int_0^{\wht R_u/2}\sqrt{\log N(s/\sqrt{T}, \m H^\ast_u, \norm{\cdot}_\infty)}\,\dd s\\
    &= 32\sqrt{T}B J_5(\wht R_u).
\end{align*}
Thus, we have
\begin{align*}
    \mb E_{\barP}\sup_{g\in\m H^\ast_u}|W_g| \leq 32\sqrt{T}B\mb E_{\barP}J_5(\wht R_u) \leq 32\sqrt{T}B J_5\Big(\sqrt{\mb E_{\barP}\wht R_u^2}\Big).
\end{align*}
The last inequality is because $z\mapsto J_5(\sqrt{z})$ is a concave function.

Next, We will estimate $J_5\Big(\sqrt{\mb E_{\barP}\wht R_u^2}\Big)$. Recall that we have shown
\begin{align*}
    \mb E_{\barP}\wht R_u^2 - u^2 &\leq \frac{2}{\sqrt{N}}\mb E_{\barP}\sup_{g\in\m H^\ast_u}|W_g|\\
    &\leq \frac{64\sqrt{T}B}{\sqrt{N}}J_5\Big(\sqrt{\mb E_{\barP}\wht R_u^2}\Big),
\end{align*}
i.e.
\begin{align*}
    \mb E_{\barP}\wht R_u^2 \leq u^2 + \frac{64\sqrt{T}B}{\sqrt{N}}J_5\Big(\sqrt{\mb E_{\barP}\wht R_u^2}\Big).
\end{align*}
Applying Lemma 2.1 in \cite{van2011local} to $J_5(\cdot)$, we get
\begin{align*}
    J_5\Big(\sqrt{\mb E_{\barP}\wht R_u^2}\Big) \lesssim J_5(u)\bigg(1 + J_5(u)\frac{64\sqrt{T}B}{\sqrt{N}u^2}\bigg).
\end{align*}
Taking it back to (\ref{eqn: bound_Wg_randomR}) leads to
\begin{align*}
    \mb E_{\barP}\sup_{g\in\m H^\ast_u}|W_g| &\lesssim \sqrt{T}BJ_5(u)\bigg(1 + J_5(u)\frac{64\sqrt{T}B}{\sqrt{N}u^2}\bigg)\\
    &\lesssim B\sqrt{T}J_5(u) + \frac{B^2TJ_5^2(u)}{\sqrt{N}u^2}
\end{align*}
Combining with (\ref{eqn: Talagrand}), we get
\begin{align*}
    \barP\bigg(C^{-1}\sup_{g\in\m H^\ast_u}\Big|\norm{g}_X^2 - \norm{g}_E^2\Big| \geq \frac{\big[J_5(u) + u\sqrt{x}\big]B\sqrt{T}}{\sqrt{N}} + \frac{\big[xu^2 + J_5^2(u)\big]B^2T}{Nu^2}\bigg) \leq e^{-x}
\end{align*}
for some constant $C > 0$. Take $x = Nu^2/36C^2B^2T$, and notice that
\begin{align*}
    \frac{J_5(u)}{u} \leq \frac{J_r(r_N; r_N)}{r_N} \leq \frac{\sqrt{N}r_N}{6CB\sqrt{T}}.
\end{align*}
Then, we get
\begin{align*}
    \barP\bigg(\sup_{g\in\m H^\ast_u}\Big|\norm{g}_X^2 - \norm{g}_E^2\Big| \geq \frac{ur_N + u^2}{6} + \frac{u^2 + r_N^2}{9}\bigg) \leq \exp\bigg\{-\frac{Nu^2}{36C^2B^2T^2}\bigg\}.
\end{align*}
Again, since $r_N \leq u$ we have
\begin{align*}
    \barP\bigg(\sup_{g\in\m H^\ast_u}\Big|\norm{g}_X^2 - \norm{g}_E^2\Big| > \frac{u^2}{2}\bigg) \leq \exp\bigg\{-\frac{Nu^2}{36C^2B^2T}\bigg\}.
\end{align*}
\end{proof}

\begin{remark}
Usually, there are several ways to bound the Rademacher complexity of $(\m H^\ast_u)^2$. A direct way is using Ledoux--Talagrand contraction inequality, see e.g. the proof of Lemma 14.9 in \cite{wainwright2019high}. The method we use here is similar to the one in \cite{van2014uniform}. 
\end{remark}

\section{Proof of Technical Lemmas}

\begin{lemma}\label{lem: monotonicity}
For any $0 < u_1\leq u_2$, we have
\begin{align*}
    \frac{\mb E_{\barP}\m Z_N(u_2)}{u_2} \leq \frac{\mb E_{\barP}\m Z_N(u_1)}{u_1}.
\end{align*}
\end{lemma}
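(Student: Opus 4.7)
The plan is to exploit two properties: the star-shapedness of $\m H^\ast$ (so that rescaling an element by any factor in $[0,1]$ stays inside $\m H^\ast$), and the linearity of the map $g\mapsto \frac{1}{N}\sum_{i=1}^N\int_0^T\langle g(\mu_t,X_t^i),\dd\overline{W}_t^i\rangle$ in its integrand. This is the standard ``peeling''-style observation that the sup-process, divided by the localization radius, is monotone nonincreasing in the radius.

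First I would fix $0 < u_1 \leq u_2$ and consider an arbitrary $g \in \m H^\ast_{u_2}$, so $\|g\|_E \leq u_2$. Setting $h := (u_1/u_2)\,g$, the factor $u_1/u_2 \in (0,1]$ and the star-shapedness of $\m H^\ast$ (around $0$, which holds because $\m H^\ast = \m H - h^\ast$ and $\m H$ is star-shaped around $h^\ast$) yield $h \in \m H^\ast$; moreover $\|h\|_E = (u_1/u_2)\|g\|_E \leq u_1$, so $h \in \m H^\ast_{u_1}$.

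Next, using linearity of the It\^o integral in the integrand,
\begin{align*}
    \bigg|\frac{1}{N}\sum_{i=1}^N\int_0^T\big\langle g(\mu_t,X_t^i),\dd\overline{W}_t^i\big\rangle\bigg| = \frac{u_2}{u_1}\bigg|\frac{1}{N}\sum_{i=1}^N\int_0^T\big\langle h(\mu_t,X_t^i),\dd\overline{W}_t^i\big\rangle\bigg| \leq \frac{u_2}{u_1}\,\m Z_N(u_1),
\end{align*}
since $h \in \m H^\ast_{u_1}$. Taking the supremum over $g \in \m H^\ast_{u_2}$ gives $\m Z_N(u_2) \leq (u_2/u_1)\,\m Z_N(u_1)$ pointwise, hence $\m Z_N(u_2)/u_2 \leq \m Z_N(u_1)/u_1$. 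Taking expectations under $\barP$ yields the claim.

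I do not anticipate any real obstacle here: the only subtlety is to make sure that $\m H^\ast$ is star-shaped around $0$, not around $h^\ast$, so that the rescaling $h = (u_1/u_2)\,g$ of an element $g \in \m H^\ast$ still lies in $\m H^\ast$. This is just the definition of the shifted class recalled in the notation section, so no additional work is needed.
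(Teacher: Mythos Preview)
Your proposal is correct and follows essentially the same argument as the paper: rescale an arbitrary $g\in\m H^\ast_{u_2}$ by $u_1/u_2$, use star-shapedness to land in $\m H^\ast_{u_1}$, and apply linearity of the It\^o integral to compare the suprema. The only cosmetic difference is that you establish the pointwise inequality $\m Z_N(u_2)/u_2\leq \m Z_N(u_1)/u_1$ before taking expectations, whereas the paper works directly with the expected suprema.
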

\begin{proof}
Recall that 
\begin{align*}
    \m Z_N(u) = \sup_{g\in\m H^\ast_u}\bigg|\frac{1}{N}\sum_{i=1}^N\int_0^T\big\langle g(\mu_t, X_t^i), \,\dd\overline{W}_t^i\bigg|
\end{align*}
and $\m H^\ast_u$ is star-shaped for any $u$. Then for any $g\in\m H^\ast_{u_2}$, let $\tilde{g} = \frac{u_1}{u_2}g\in\m H^\ast_{u_1}$ since $0 < \frac{u_1}{u_2}\leq 1.$ So
\begin{align*}
    \frac{u_1}{u_2}\mb E_{\barP}\m Z_N(u_2) &= \mb E_{\barP}\sup_{g\in\m H^\ast_{u_2}}\bigg|\frac{1}{N}\sum_{i=1}^N\int_0^T\Big\langle\frac{u_1}{u_2}g(\mu_t, X_t^i),\,\dd\overline{W}_t^i\Big\rangle\bigg|\\
    &\leq \mb E_{\barP}\sup_{\tilde{g}\in\m H^\ast_{u_1}}\bigg|\frac{1}{N}\sum_{i=1}^N\int_0^T\big\langle\tilde{g}(\mu_t, X_t^i),\,\dd\overline{W}_t^i\big\rangle\bigg|\\
    &= \mb E_{\barP}\m Z_N(u_1).
\end{align*}
\end{proof}

\begin{lemma}[Lemma 22 in \cite{della2021nonparametric}]\label{lem: bd_2pmoment}
For any $g\in\m H^\ast$ we have
\begin{align*}
    \mb E_{\barP}\Big\|g(\mu_t, X_t^N) - g(\mu_t^N, X_t^N)\Big\|^{2p} \leq \frac{p!K_1^p}{(N-1)^p}\|\tilde{g}\|_{\Lip}^{2p}.
\end{align*}
Here $K_1 \lesssim 1 + d^2$ is a constant.
\end{lemma}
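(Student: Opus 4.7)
Under $\barP$, the trajectories $(X_t^1),\dots,(X_t^N)$ are i.i.d.\ with common marginal law $\mu_t$ at each time $t$. The plan is to exploit this i.i.d.\ structure by conditioning on $X_t^N$ and treating the remaining particles as an i.i.d.\ sample from $\mu_t$. Using the structural form $g(\nu,x)=\int_{\mb T^d}\tilde g(x,y)\,\dd\nu(y)$, I would write
\[
g(\mu_t^N,X_t^N)-g(\mu_t,X_t^N) \;=\; \frac{1}{N}\sum_{j=1}^N Y_j,
\qquad Y_j:=\tilde g(X_t^N,X_t^j)-\!\int_{\mb T^d}\!\tilde g(X_t^N,y)\,\dd\mu_t(y),
\]
and isolate the diagonal $j=N$ from the off-diagonal sum:
$\sum_{j=1}^N Y_j = Y_N + \sum_{j=1}^{N-1} Y_j$. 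For any vectors $a,b\in\mb R^d$, $\|a+b\|^{2p}\le 2^{2p-1}(\|a\|^{2p}+\|b\|^{2p})$, so it suffices to bound the two moments separately.

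The diagonal term is deterministic given $X_t^N$ and bounded pointwise using the Lipschitz property of $\tilde g$ together with the fact that the torus has diameter $O(\sqrt d)$: $\|Y_N\|\le \sqrt d\,\|\tilde g\|_{\Lip}$, giving a contribution of order $N^{-2p}d^p\|\tilde g\|_{\Lip}^{2p}$, which is negligible relative to the target bound. The main work is the off-diagonal piece. Conditional on $X_t^N$, the $Y_1,\dots,Y_{N-1}$ are i.i.d.\ $\mb R^d$-valued random vectors, centered (since $X_t^j\sim\mu_t$ independently of $X_t^N$ for $j\ne N$), and uniformly bounded by $M:=\sqrt d\,\|\tilde g\|_{\Lip}$.

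For each coordinate $k\in\{1,\dots,d\}$, the scalar sum $\sum_{j=1}^{N-1}Y_j^{(k)}$ is a sum of i.i.d.\ centered r.v.s bounded by $M$, hence sub-Gaussian with parameter $\lesssim (N-1)M^2$. The standard moment bound for sub-Gaussian variables yields
\[
\mb E\bigl[\bigl(\textstyle\sum_{j=1}^{N-1}Y_j^{(k)}\bigr)^{2p}\,\big|\,X_t^N\bigr] \;\le\; C^{p}\,p!\,(N-1)^{p}M^{2p},
\]
for an absolute constant $C$ (equivalently via Rosenthal or the combinatorial $(2p-1)!!$ bound for moments of bounded centered i.i.d.\ sums). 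To pass from coordinate-wise to the Euclidean norm, I would use Jensen in the form $(\sum_k a_k^2)^p\le d^{p-1}\sum_k a_k^{2p}$ to obtain
\[
\mb E\bigl\|\textstyle\sum_{j=1}^{N-1}Y_j\bigr\|^{2p} \;\le\; d^{p-1}\sum_{k=1}^d \mb E\bigl(\textstyle\sum_{j=1}^{N-1}Y_j^{(k)}\bigr)^{2p} \;\le\; C^{p}\,p!\,(N-1)^{p}d^{2p}\|\tilde g\|_{\Lip}^{2p}.
\]
Dividing by $N^{2p}$ and combining with the diagonal estimate gives the claim with $K_1\lesssim 1+d^2$, where the $d^2$ comes from the factor $d^p\cdot d^p=d^{2p}$ that enters via (i) the torus-diameter bound on the pointwise values of $Y_j$ and (ii) the Jensen-type reduction from coordinate sums to vector norms.

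The main obstacle, and the only genuinely delicate step, is obtaining the sharp factorial scaling $p!\,K_1^p$ in a dimension-dependent way: using a vector-valued Rosenthal inequality directly would yield constants of order $p^p$, which are too crude. Handling it via the coordinate decomposition and the scalar sub-Gaussian moment bound, combined with the Jensen reduction, is what delivers both the correct $p!$ growth and the explicit $d^2$ factor. All remaining steps (conditioning on $X_t^N$, splitting the diagonal, applying $(a+b)^{2p}$ subadditivity) are routine.
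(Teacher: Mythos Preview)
The paper does not supply its own proof of this lemma; it is stated with attribution to Lemma~22 of \cite{della2021nonparametric} and used as a black box. Your proposal is a correct self-contained argument: the decomposition $g(\mu_t^N,X_t^N)-g(\mu_t,X_t^N)=N^{-1}\sum_j Y_j$, the conditional i.i.d.\ centered structure of $(Y_j)_{j\ne N}$ under $\barP$, the coordinate-wise sub-Gaussian moment bound for bounded centered sums, and the Jensen reduction $(\sum_k a_k^2)^p\le d^{p-1}\sum_k a_k^{2p}$ combine to give exactly $p!\,K_1^p/(N-1)^p$ with $K_1\lesssim 1+d^2$, the $d^2$ arising as you identify from the torus diameter times the coordinate-to-norm passage.
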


\begin{lemma}[$\psi_\alpha$-norm of product]\label{lem: orlicznorm_prod}
For any $\alpha > 0$ and random variables $X$ and $Y$, we have
\begin{align*}
    \norm{XY}_{\psi_\alpha} \leq \norm{X}_{\psi_{2\alpha}}\cdot\norm{Y}_{\psi_{2\alpha}}.
\end{align*}
\end{lemma}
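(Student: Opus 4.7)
\medskip

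\noindent\textbf{Proof plan.} The statement is a standard Orlicz quasi-norm inequality, and the plan is to reduce it to a single application of the arithmetic-geometric mean inequality combined with Cauchy--Schwarz, working directly from the definition $\|Z\|_{\psi_\beta}=\inf\{C>0:\mathbb{E}[\exp(|Z|^\beta/C^\beta)]\le 2\}$ (equivalently $\mathbb{E}\psi_\beta(|Z|/C)\le 1$) used in the paper.

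First, I would dispose of the trivial cases: if either $\|X\|_{\psi_{2\alpha}}=0$ or $\|Y\|_{\psi_{2\alpha}}=0$, then the corresponding variable vanishes almost surely, so $XY=0$ a.s.\ and the inequality is automatic; if either Orlicz norm is $+\infty$, the right-hand side is infinite and there is nothing to prove. So assume $a:=\|X\|_{\psi_{2\alpha}}\in(0,\infty)$ and $b:=\|Y\|_{\psi_{2\alpha}}\in(0,\infty)$, which by definition gives
\[
\mathbb{E}\exp\bigl((|X|/a)^{2\alpha}\bigr)\le 2,\qquad \mathbb{E}\exp\bigl((|Y|/b)^{2\alpha}\bigr)\le 2.
\]

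The core step is the pointwise AM-GM bound applied to the nonnegative quantities $(|X|/a)^{\alpha}$ and $(|Y|/b)^{\alpha}$, giving
\[
\Bigl(\frac{|XY|}{ab}\Bigr)^{\alpha}=\Bigl(\frac{|X|}{a}\Bigr)^{\alpha}\Bigl(\frac{|Y|}{b}\Bigr)^{\alpha}\le \tfrac{1}{2}\Bigl(\frac{|X|}{a}\Bigr)^{2\alpha}+\tfrac{1}{2}\Bigl(\frac{|Y|}{b}\Bigr)^{2\alpha}.
\]
Exponentiating and taking expectations, then applying the Cauchy--Schwarz inequality, yields
\[
\mathbb{E}\exp\!\Bigl((|XY|/(ab))^{\alpha}\Bigr)\le \Bigl(\mathbb{E}\exp((|X|/a)^{2\alpha})\Bigr)^{1/2}\Bigl(\mathbb{E}\exp((|Y|/b)^{2\alpha})\Bigr)^{1/2}\le \sqrt{2}\cdot\sqrt{2}=2.
\]
This is exactly the defining condition $\mathbb{E}\psi_\alpha(|XY|/(ab))\le 1$ for $\|XY\|_{\psi_\alpha}\le ab$, completing the proof.

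There is essentially no technical obstacle here: the argument is symmetric in $X$ and $Y$, uses only elementary inequalities, and works uniformly for all $\alpha>0$, including the regime $\alpha\in(0,1)$ where $\|\cdot\|_{\psi_\alpha}$ is only a quasi-norm (the definition in terms of $\mathbb{E}\psi_\alpha(\cdot/C)\le 1$ still makes sense and is what the argument manipulates). The only subtlety worth being careful about is that one should exponentiate \emph{before} applying Cauchy--Schwarz so that the exponent $2\alpha$ matches the Orlicz index on the right-hand side; this is why the factor of $1/2$ in AM-GM is exactly what is needed, and why the exponent doubles from $\alpha$ to $2\alpha$ as claimed.
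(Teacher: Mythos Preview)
Your proof is correct and follows essentially the same route as the paper's: apply the pointwise AM--GM bound $(|XY|/(ab))^{\alpha}\le \tfrac12(|X|/a)^{2\alpha}+\tfrac12(|Y|/b)^{2\alpha}$, exponentiate, and use Cauchy--Schwarz to split the expectation into the two $\psi_{2\alpha}$ factors, each bounded by $2$. Your treatment of the degenerate cases ($\|X\|_{\psi_{2\alpha}}=0$ or $=\infty$) is a harmless extra bit of care that the paper omits.
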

\begin{proof}
By Cauchy--Schwartz's inequality,
\begin{align*}
    \mb E\exp\bigg\{\bigg(\frac{|XY|}{\lambda_x\lambda_y}\bigg)^\alpha\bigg\} &\leq \mb E\exp\bigg\{\frac{1}{2}\bigg(\frac{|X|}{\lambda_x}\bigg)^{2\alpha} + \frac{1}{2}\bigg(\frac{|Y|}{\lambda_y}\bigg)^{2\alpha}\bigg\}\\
    &=\sqrt{\mb E\exp\bigg\{\bigg(\frac{|X|}{\lambda_x}\bigg)^{2\alpha}\bigg\}}\cdot \sqrt{\mb E\exp\bigg\{\bigg(\frac{|Y|}{\lambda_y}\bigg)^{2\alpha}\bigg\}} \leq 2,
\end{align*}
if we take $\lambda_x = \norm{X}_{\psi_{2\alpha}}$ and $\lambda_y = \norm{Y}_{\psi_{2\alpha}}$. Therefore, by definition
\begin{align*}
    \norm{XY}_{\psi_\alpha} \leq \lambda_x\lambda_y = \norm{X}_{\psi_{2\alpha}}\cdot\norm{Y}_{\psi_{2\alpha}}.
\end{align*}
\end{proof}

\begin{lemma}\label{lem: diagonal_elements}
For every $g\in\m H^\ast$, let
\begin{align*}
    V_N(g) := \frac{1}{N^2}\sum_{i=1}^N\int_0^T\big\langle\xi_{i,i}^g(t),\,\dd\overline{W}_t^{i}\big\rangle.
\end{align*}
Then $V_n(g)$ is sub-Gaussian with parameter $CB^2N^{-3}$ for some positive constant $C$ (may depends on $T$), i.e.,
\begin{align*}
    \barP\Big(V_N(g) > u\Big) \leq e^{-\frac{N^3u^2}{2CB^2}},\qquad\forall\,\, u>0.
\end{align*}
This implies that
\begin{align*}
    \norm{V_N(g)}_{\psi_2} \leq \frac{2B\sqrt{C}}{N\sqrt{N}}.
\end{align*}
\end{lemma}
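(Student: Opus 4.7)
The plan is to exploit the fact that under the transformed measure $\overline{\mathbb{P}}^N$ constructed in Step 1, the pairs $(X^i,\overline{W}^i)_{i=1}^N$ are i.i.d., so the summands
\[
Y_i := \int_0^T \big\langle \xi_{i,i}^g(t),\,\mathrm d\overline{W}_t^i\big\rangle, \qquad 1\le i\le N,
\]
are i.i.d.~real-valued random variables. Then $V_N(g)=N^{-2}\sum_{i=1}^N Y_i$, and sub-Gaussianity of $V_N(g)$ reduces to showing each $Y_i$ is sub-Gaussian with parameter $O(B^2 T)$, after which a sum-of-i.i.d.~sub-Gaussians argument yields the claimed variance proxy of order $B^2/N^3$.

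The key ingredient is an almost-sure bound on the quadratic variation of each $Y_i$. Since $g\in\mathcal{H}^\ast$ has the form $g(\nu,x)=\int \tilde g(x,y)\,\mathrm d\nu(y)$ with $\|\tilde g\|_\infty\le 4B$ (as $\tilde g$ is a difference of functions built from $F,G\in\tilde{\mathcal{H}}$ with $\|F\|_\infty,\|G\|_\infty\le B$), the integrand
\[
\xi_{i,i}^g(t)=\tilde g(X_t^i,X_t^i)-\int_{\mathbb{T}^d}\tilde g(X_t^i,y)\,\mathrm d\mu_t(y)
\]
satisfies $\|\xi_{i,i}^g(t)\|\le 8B$ uniformly in $t$. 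Moreover, under $\overline{\mathbb{P}}^N$ the process $X_t^i$ solves \eqref{eqn: mean_field_SDE} driven only by $\overline{W}^i$, so $\xi_{i,i}^g(t)$ is adapted to the filtration generated by $\overline{W}^i$. The quadratic variation of the It\^o integral is therefore bounded deterministically by $\int_0^T \|\xi_{i,i}^g(t)\|^2\,\mathrm dt\le 64 B^2 T$.

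Using this deterministic bound on the quadratic variation, the exponential supermartingale argument (the standard Novikov-type estimate for It\^o integrals with bounded integrand) gives
\[
\mathbb{E}_{\overline{\mathbb{P}}^N}\!\left[\exp\!\left(\lambda Y_i\right)\right]\le \exp\!\left(32\lambda^2 B^2 T\right), \qquad \lambda\in\mathbb{R},
\]
so each $Y_i$ is sub-Gaussian with variance proxy $64B^2 T$. By independence of the $Y_i$'s,
\[
\mathbb{E}_{\overline{\mathbb{P}}^N}\!\left[\exp\!\left(\lambda V_N(g)\right)\right]=\prod_{i=1}^N\mathbb{E}_{\overline{\mathbb{P}}^N}\!\left[\exp\!\left(\tfrac{\lambda}{N^2}Y_i\right)\right]\le \exp\!\left(\frac{32\lambda^2 B^2 T}{N^3}\right),
\]
so $V_N(g)$ is sub-Gaussian with parameter $CB^2/N^3$ for $C=64T$. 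A Chernoff bound delivers the tail estimate, and the $\psi_2$ bound follows from the standard equivalence $\|X\|_{\psi_2}\lesssim\sigma$ for sub-Gaussian $X$ with parameter $\sigma^2$.

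The main (minor) obstacle is justifying the deterministic quadratic-variation bound in the right filtration: one needs to confirm that under $\overline{\mathbb{P}}^N$ the trajectory $X^i$ is adapted to the filtration generated by $\overline{W}^i$ alone, so that $\xi_{i,i}^g$ is a legitimate integrand against $\overline{W}^i$ and the It\^o isometry/exponential martingale estimate applies. This is exactly the content of the decoupling in Step 1, which turns the interacting system into independent SDEs driven by the $\overline{W}^i$, so no additional work is needed beyond invoking it.
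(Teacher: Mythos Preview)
Your argument is correct and reaches the same conclusion, but the mechanism differs from the paper's. The paper does not exploit the i.i.d.\ structure of the summands at all: it views the whole sum $\sum_{i=1}^N\int_0^T\langle\xi_{i,i}^g(t),\mathrm d\overline W_t^i\rangle$ as a single $\overline{\mathbb P}^N$-martingale with quadratic variation $\sum_{i=1}^N\int_0^T\|\xi_{i,i}^g(t)\|^2\,\mathrm dt\lesssim NB^2T$, applies the Burkholder--Davis--Gundy inequality to obtain the moment bound $\mathbb E_{\overline{\mathbb P}^N}|V_N(g)|^{2p}\le p!\,C^pB^{2p}N^{-3p}$, and then converts this Bernstein-type moment control into the sub-Gaussian tail and $\psi_2$ bound. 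Your route instead uses the deterministic quadratic-variation bound on each $Y_i$, the exponential-martingale identity to get a genuine MGF bound $\mathbb E_{\overline{\mathbb P}^N}e^{\lambda Y_i}\le e^{C\lambda^2B^2T}$, and then factors over $i$ by independence. Both arguments hinge on the same almost-sure bound $\|\xi_{i,i}^g(t)\|\lesssim B$; yours is arguably more elementary (no BDG constants, no moment-to-tail conversion), while the paper's has the mild advantage of not needing the per-particle adaptedness claim you flag at the end, since BDG only requires adaptedness to the joint filtration.
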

\begin{proof}
For any integer $p\geq 1$, by Burkholder--Davis--Gundy's inequality,
\begin{align*}
    \mb E_{\barP}\big|V_N(g)\big|^{2p} &\leq \frac{C^pp^p}{N^{4p}}\mb E_{\barP}\bigg(\sum_{i=1}^N\int_0^T\norm{\xi_{i,i}^g(t)}^2\,\dd t\bigg)^p \leq \frac{C^pB^{2p}p^p}{N^{3p}} \leq p!\cdot\frac{C^pB^{2p}}{N^{3p}}.
\end{align*}
The last inequality is by $(p/e)^p \leq p!$. Also, we know $V_N(g)$ is mean-zero. The high probability bound just follows from Appendix in \cite{della2021nonparametric}. The bound of $\psi_2$-norm can be derived by the argument in Lemma \ref{lem: U_statistics}.
\end{proof}

\begin{lemma}[Concentration of degenerate U-statistics]\label{lem: U_statistics}
For every $g\in\m H^\ast$, let $g^\dagger$ be the degenerate kernel defined in {\bf Step 5} in the proof of Theorem~\ref{eqn: converge_of_b} and define the U-statistics
\begin{align*}
    U_N(g) = \frac{1}{N^2}\sum_{1\leq i\neq j\leq N}g^\dagger(\overline{W}^i, \overline{W}^j)
\end{align*}
Then, there is a constant $C>0$ (may depends on $T$) such that
\begin{align*}
    \barP\Big(\big|U_N(g)\big| > u\Big) \leq \exp\Big\{-\Big(\frac{uN}{CB\log\log N}\Big)^{\frac{2}{3}}\Big\}, \qquad \forall\,u\geq \frac{CB(\log N)^2}{N}.
\end{align*}
This implies that
\begin{align*}
    \norm{U_N(g)}_{\psi_{2/3}} \leq \frac{3CB\log\log N}{N}.
\end{align*}
\end{lemma}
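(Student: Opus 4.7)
The plan is to prove this concentration inequality for the degenerate U-statistic by combining (i) a verification of the canonical structure of the kernel $g^\dagger$, (ii) a conditional sub-Gaussian moment bound via Burkholder--Davis--Gundy, and (iii) a known deviation inequality for canonical U-statistics of order two with sub-Gaussian entries.

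For the canonical structure, conditioning on $\overline{W}^j$ reduces $g^\dagger(\overline{W}^i,\overline{W}^j)=\int_0^T\langle \xi^g_{i,j}(t),\dd\overline{W}^i_t\rangle$ to a mean-zero It\^o integral in the independent Brownian motion $\overline{W}^i$, giving $\mb E_{\barP}[g^\dagger\mid\overline{W}^j]=0$. Conditioning on $\overline{W}^i$, the paths $X^i$ and $X^j$ are independent under $\barP$ with $X^j_t\sim\mu_t$, so the centering built into $\xi^g_{i,j}(t) = \tilde g(X_t^i, X_t^j) - \int \tilde g(X_t^i, y)\,\dd\mu_t(y)$ forces the integrand to have conditional mean zero, yielding $\mb E_{\barP}[g^\dagger\mid\overline{W}^i]=0$. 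Next, since $\|\xi^g_{i,j}(t)\|\le 2B$ uniformly in $t$ by the bound on $\tilde g$, Burkholder--Davis--Gundy applied conditional on $\overline{W}^j$ yields
\[
\mb E\!\left[|g^\dagger(\overline{W}^i,\overline{W}^j)|^{2p}\,\big|\,\overline{W}^j\right]\le (Cp)^p(4B^2T)^p,
\]
so $g^\dagger$ is conditionally (hence also unconditionally) sub-Gaussian with parameter $\lesssim B\sqrt{T}$. The conditional nature of the bound is important: it lets us treat $g^\dagger$ as uniformly sub-Gaussian in $\overline{W}^i$ regardless of $\overline{W}^j$.

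With these ingredients, I apply an exponential tail inequality for canonical U-statistics of order two with sub-Gaussian kernels---for instance, the bounded-kernel deviation inequality of Adamczak (or the Gin\'e--Lata\l{}a--Zinn chaos inequality) extended via truncation. After truncating $g^\dagger$ at a level $M\asymp B\sqrt{T\log N}$ and absorbing a negligible Hoeffding correction to restore canonicity, the standard four-regime tail applies; for $u\ge CB(\log N)^2/N$ the chaos regime $\exp(-(Nu/(CBM))^{2/3})$ dominates. To improve the resulting $\sqrt{\log N}$ inside the exponent to the claimed $\log\log N$, I perform the truncation dyadically: decompose $g^\dagger$ into annular shells $\{2^k B\sqrt{T}\le|g^\dagger|<2^{k+1}B\sqrt{T}\}$, apply the bounded-kernel canonical U-statistic inequality to each shell separately, and sum the geometric contributions, with the sum effectively truncated at level $k\lesssim\log\log N$ by the sub-Gaussian tail of $g^\dagger$. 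The claimed $\psi_{2/3}$ quasi-norm bound follows from this tail estimate by direct integration of $\mathbb{P}(|U_N(g)|>u)$.

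The main obstacle will be the improvement from $\sqrt{\log N}$ to $\log\log N$: a single-level truncation of the sub-Gaussian kernel only yields the cruder factor, and extracting the sharper $\log\log N$ requires the layered decomposition together with careful bookkeeping to ensure that complete degeneracy is preserved (via Hoeffding projection) at each shell. A secondary technical point is controlling the rare extreme event $\{\max_{i\ne j}|g^\dagger(\overline{W}^i,\overline{W}^j)|>cB\sqrt{T\log N}\}$ without inflating the tail estimate; this is handled by a crude union bound over the $N(N-1)$ pairs against the unconditional sub-Gaussian tail of $g^\dagger$, which contributes only a polynomially small probability to be absorbed into the final estimate.
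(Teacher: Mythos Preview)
Your approach is genuinely different from the paper's, and the step where you claim to sharpen $\sqrt{\log N}$ to $\log\log N$ via dyadic shells is not convincingly justified. The paper does \emph{not} truncate and then invoke a black-box bounded-kernel deviation inequality. Instead it works directly with $L_p$ moments: after decoupling (de la Pe\~na--Gin\'e), it follows the architecture of the Gin\'e--Lata\l a--Zinn moment inequality (their Theorem~3.2 and Proposition~3.1) but deviates at the crucial point. Specifically, for the term $p^p\,\mb E_{\barP}^1\sum_i\big|\sum_{j\neq i}g^\dagger_{ij}\big|^p$ the paper does \emph{not} iterate Rosenthal as Gin\'e et al.\ do; it applies Burkholder--Davis--Gundy directly to the inner It\^o integral $\int_0^T\langle\sum_{j\neq i}\xi^{g,D}_{i,j},\dd\overline W^{i1}_t\rangle$, exploiting that the quadratic variation is deterministically bounded. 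This yields the clean moment bound $\|U_N(g)\|_{L_p}^p\le C^pB^pp^{3p/2}N^{1-p}$, and then Markov with the choice $p=\big(uN/(eCB\log\log N)\big)^{2/3}$ gives the stated tail. The $\log\log N$ arises simply from the optimization, compensating for the extra factor $N$ (i.e.\ $N^{1-p}$ rather than $N^{-p}$) in the moment bound.

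Your proposal, by contrast, truncates $g^\dagger$ and applies a bounded-kernel tail inequality shell by shell. The difficulty is that the $2/3$-regime in the Gin\'e--Lata\l a--Zinn tail is driven by the $L_\infty$ bound of the kernel, so the largest active shell (with $2^k\asymp\sqrt{\log N}$) still contributes $\exp\{-(Nu/(CB\sqrt{\log N}))^{2/3}\}$ regardless of how many shells you sum over; the fact that there are only $O(\log\log N)$ shells controls the \emph{number} of summands, not the exponent of the worst one. To beat $\sqrt{\log N}$ you would have to show that the small variance of the high shells pushes them out of the $2/3$-regime entirely, which requires tracking all four Gin\'e--Lata\l a--Zinn parameters through the Hoeffding projection at each level---substantially more work than you indicate, and not obviously yielding $\log\log N$. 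The paper's moment-based route sidesteps all of this by letting BDG absorb the unboundedness of the It\^o integral directly.
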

\begin{proof}
Let $\{\overline{W}^{il}: 1\leq i\leq N, l=1, 2\}$ be independent copies of standard Brownian motions under $\barP$. By Theorem 3.1.1 in \cite{de1999decoupling},
\begin{align*}
    \big|\!\big|U_N(g)\big|\!\big|_{L_p(\barP)} \lesssim \big|\!\big| U_N^D(g)\big|\!\big|_{L_p(\barP)} := \bigg|\!\bigg|\frac{1}{N^2}\sum_{1\leq i\neq j\leq N}g^\dagger(\overline{W}^{i1},\overline{W}^{j2})\bigg|\!\bigg|_{L_p(\barP)}.
\end{align*}
For simplicity, let
\begin{align*}
    g^\dagger_{ij} = N^{-2}g^\dagger(\overline{W}^{i1}, \overline{W}^{j2}) = \frac{1}{N^2}\int_0^T\big\langle \xi_{i,j}^{g,D}(t),\,\dd\overline{W}_t^{i1}\big\rangle,
\end{align*}
where $\xi_{i,j}^{g,D}$ is the decoupling version of $\xi_{i,j}^g$ defined as
\begin{align*}
    \xi_{i,j}^{g,D}(t) = \tilde{g}(X_t^{i,1}, X_t^{j, 2}) - \int_{\mb T^d}\tilde{g}(X_t^{i,1}, y)\,\dd\mu_t(y).
\end{align*}
Here $X^{i,l}$ is the solution of (\ref{eqn: mean_field_SDE}) corresponding to the Brownian motion $\overline{W}^{i,l}$. For $p\geq 2$, follow the proof of Theorem 3.2 in \cite{gine2000exponential}, we can get
\begin{align}\label{eqn: pmoment_Ustat}
    \mb E_{\barP}\bigg|\sum_{1\leq i\neq j\leq N}g^\dagger_{ij}\bigg|^p
    &\leq C^p\mb E_{\barP}^2\bigg(p^{\frac{p}{2}}\bigg[\sum_{i=1}^N\mb E_{\barP}^1\Big(\sum_{j=1,j\neq i}^Ng^\dagger_{ij}\Big)^2\bigg]^{\frac{p}{2}} + p^p\mb E_{\barP}^1\sum_{i=1}^N\Big|\sum_{j=1,j\neq i}^Ng^\dagger_{ij}\Big|^p\bigg),
\end{align}
by conditioning on $\{\overline{W}^{j,2}: 1\leq j\leq N\}$ first. Applying Proposition 3.1 in \cite{gine2000exponential}, the first term can be bounded by
\begin{align*}
    &p^{\frac{p}{2}}\mb E_{\barP}^2\bigg[\sum_{i=1}^N\mb E_{\barP}^1\Big(\sum_{j=1,j\neq i}^Ng^\dagger_{ij}\Big)^2\bigg]^{\frac{p}{2}}
    \leq p^{\frac{p}{2}}\bigg(\sum_{1\leq i\neq j\leq N}\mb E_{\barP}\big(g^\dagger_{ij}\big)^2\bigg)^{\frac{p}{2}}\\
    &\qquad\qquad\qquad + p^p\mb E \bigg(\sup\bigg\{\mb E_{\barP}^2\sum_{j=1}^N\Big(\mb E_{\barP}^1\sum_{i=1, i\neq j}^Ng^\dagger_{ij} h_i(\overline{W}^{i1})\Big)^2: \mb E_{\barP}^1\sum_{i=1}^Nh_i^2(\overline{W}^{i1}) \leq 1\bigg\}\bigg)^{\frac{p}{2}}\\
    &\qquad\qquad\qquad + p^\frac{3p}{2}\mb E_{\barP}^2\max_{1\leq j\leq N}\Big(\mb E_{\barP}^1\sum_{i=1}^N\big(g^\dagger_{ij}\big)^2\Big)^{\frac{p}{2}}.
\end{align*}
Now, let us bound these three terms. Notice that
\begin{align*}
    \bigg(\sum_{1\leq i\neq j\leq N}\mb E_{\barP}\big(g^\dagger_{ij}\big)^2\bigg)^\frac{p}{2}
    &=\frac{1}{N^{2p}}\bigg(\sum_{1\leq i\neq j\leq N}\mb E_{\barP}\int_0^T\norm{\xi_{i,j}^{g,D}(t)}^2\,\dd t\bigg)^\frac{p}{2}\leq \Big(\frac{CB}{N}\Big)^p.
\end{align*}
Next, we have
\begin{align*}
    &\quad\, \sup\bigg\{\mb E_{\barP}^2\sum_{j=1}^N\Big(\mb E_{\barP}^1\sum_{i=1, i\neq j}^Ng^\dagger_{ij} h_i(\overline{W}^{i1})\Big)^2: \mb E_{\barP}^1\sum_{i=1}^Nh_i^2(\overline{W}^{i1}) \leq 1\bigg\}\\
    &\leq \sup\bigg\{\mb E_{\barP}^2\sum_{j=1}^N\Big(\mb E_{\barP}^1\sum_{i\neq j}\big(g^\dagger_{ij}\big)^2\Big)\Big(\mb E_{\barP}^1\sum_{i\neq j}h_i^2\big(\overline{W}^{i1}\big)\Big): \mb E_{\barP}^1\sum_{i=1}^Nh_i^2(\overline{W}^{i1}) \leq 1\bigg\}\\
    &\leq \mb E_{\barP}^2\sum_{j=1}^N\mb E_{\barP}^1\sum_{i\neq j}\big(g^\dagger_{ij}\big)^2\\
    &\lesssim N^{-2}B^2.
\end{align*}
Lastly,
\begin{align*}
    \mb E_{\barP}^2\max_{1\leq j\leq N}\Big(\mb E_{\barP}^1\sum_{i=1}^N\big(g^\dagger_{ij}\big)^2\Big)^\frac{p}{2}
    &\leq \sum_{j=1}^N\mb E_{\barP}^2\Big(\mb E_{\barP}^1\sum_{i=1}^N\big(g^\dagger_{ij}\big)^2\Big)^\frac{p}{2}\\
    &= \frac{1}{N^{2p}}\sum_{j=1}^N\mb E_{\barP}^2\bigg(\sum_{i=1}^N\mb E_{\barP}^1\int_0^T\norm{\xi_{i,j}^{g,D}(t)}^2\,\dd t\bigg)^\frac{p}{2}\\
    &\leq \frac{C^pB^p}{N^{\frac{3p}{2}-1}}.
\end{align*}

Rather than bound the second term of (\ref{eqn: pmoment_Ustat}) as \cite{gine2000exponential} did, we directly bound it by Burkholder--Davis--Gundy's inequality, see e.g., \cite{barlow1982semi},
\begin{align*}
    p^p\mb E_{\barP}\sum_{i=1}^N\Big|\sum_{j=1,j\neq i}^Ng^\dagger_{ij}\Big|^p
    &=\frac{p^p}{N^{2p}}\sum_{i=1}^N\mb E_{\barP}\bigg|\int_0^T\bigg\langle\sum_{j=1,j\neq i}^N\xi_{i,j}^{g,D},\,\dd \overline{W}^{i1}_t\bigg\rangle\bigg|^p\\
    &\leq \frac{p^p}{N^{2p}}\sum_{i=1}^N C^pp^{\frac{p}{2}}\mb E_{\barP}\bigg(\int_0^T\Big|\!\Big|\sum_{j=1,j\neq i}^N\xi_{i,j}^{g,D}\Big|\!\Big|^2\,\dd t\bigg)^{\frac{p}{2}}\\
    &\leq \bigg(\frac{CBp^{\frac{3}{2}}}{N^{1-\frac{1}{p}}}\bigg)^p.
\end{align*}
Combining all the pieces together, we have
\begin{align*}
    \big|\!\big|U_N(g)\big|\!\big|_{L_p(\barP)}^p &\leq C^pB^p \bigg[\frac{p^{\frac{p}{2}}}{N^p} + \frac{p^p}{N^p} + \frac{p^\frac{3p}{2}}{N^{\frac{3p}{2}-1}} + \frac{p^\frac{3p}{2}}{N^{p-1}}\bigg]\leq C^pB^pp^{3p/2}N^{1-p}.
\end{align*}
By Markov's inequality, we have
\begin{align*}
    \barP\Big(\big|U_N(g)\big| > u\Big) \leq \frac{C^pB^pp^{3p/2}}{u^pN^{p-1}} \leq e^{-p},
\end{align*}
by tuning the parameter $p > 2$ such that $CBp^{3/2} < uN^{1 - 1/p}$. By choosing
\begin{align*}
    p = \bigg(\frac{uN}{eCB\log\log N}\bigg)^{\frac{2}{3}},\qquad \textrm{when}\,\, u\geq \frac{CBe(\log N)^2}{N},
\end{align*}
we get
\begin{align*}
    \barP\Big(\big|U_N(g)\big| > u\Big) \leq \exp\Big\{-\Big(\frac{uN}{CB\log\log N}\Big)^{\frac{2}{3}}\Big\}, \qquad \forall\,u\geq \frac{CB(\log N)^2}{N}
\end{align*}
for some constant $C$.

Lastly, we will prove the statement about the Orlicz norm of $U_N(g)$. Take 
\begin{align*}
    \lambda = \frac{3CB\log\log N}{N},
\end{align*}
and we have
\begin{align*}
    \mb E_{\barP}\exp\bigg\{\bigg(\frac{\big|U_N(g)\big|}{\lambda}\bigg)^\frac{2}{3}\bigg\}
    &= \int_0^\infty \barP\bigg(\big|U_N(g)\big| > \lambda(\log u)^{\frac{3}{2}}\bigg)\,\dd u\\
    &\leq 1 + \int_{1}^\infty \exp\bigg\{-\bigg(\frac{\lambda(\log u)^{3/2}N}{CB\log\log N}\bigg)^{\frac{2}{3}}\bigg\}\,\dd u\\
    &= 1 + \int_{1}^\infty \exp\bigg\{-\bigg(\frac{\lambda N}{CB\log\log N}\bigg)^{\frac{2}{3}}\log u\bigg\}\,\dd u\\
    &= 1 - \bigg[1-\bigg(\frac{\lambda N}{CB\log\log N}\bigg)^{\frac{2}{3}}\bigg]^{-1}\\
    &< 2.
\end{align*}
We finish the proof.
\end{proof}

\begin{lemma}[sub-exponential increments]\label{lem: sub_exp_increments}
Let 
\begin{align*}
    Y_g = \frac{1}{\sqrt{N}}\sum_{i=1}^N\int_0^T\big\langle g(\mu_t, X_t^i),\,\dd W_t^i\big\rangle.
\end{align*}
Then for any $f$, and $g\in\m H^\ast$, $Y_f - Y_g$ is sub-exponential with parameters satisfying
\begin{align*}
    \mb E_{\barP}e^{\lambda(Y_f - Y_g)} \leq \exp\bigg\{\frac{\lambda^2\norm{f-g}_E^2/2}{1 - \sqrt{T/2N}\lambda\norm{f-g}_\infty}\bigg\}.
\end{align*}
Moreover, the Bernstein's bound holds as well, i.e.,
\begin{align*}
    \barP\big(|Y_f - Y_g| > u\big) < 2\exp\bigg\{-\frac{u^2}{2\big(\norm{f-g}_E^2 + \sqrt{T/2N}\norm{f-g}_\infty u\big)}\bigg\}.
\end{align*}
\end{lemma}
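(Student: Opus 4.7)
The plan is to leverage the fact that under $\barP$ the pairs $(X^i,\overline{W}_t^i)_{i=1}^N$ are i.i.d., convert the MGF of the It\^o integral into an MGF of its quadratic variation via the Dol\'eans--Dade exponential, and then bound the latter with a Bennett-type convexity argument. Setting $h := f-g$ and $M^i_T := \int_0^T\langle h(\mu_t,X_t^i),\,\dd\overline{W}_t^i\rangle$, we have $Y_f-Y_g = N^{-1/2}\sum_{i=1}^N M^i_T$, and the $M^i_T$'s are i.i.d.~under $\barP$. Since $h$ is bounded and $T<\infty$, Novikov's condition is trivially satisfied, so $\exp(\lambda M^i_t - (\lambda^2/2)\langle M^i\rangle_t)$ is a true $\barP$-martingale; evaluating at $t=T$ and taking expectation gives $\mb E_{\barP}[e^{\lambda M^i_T}] = \mb E_{\barP}[\exp((\lambda^2/2)Q^i)]$ with $Q^i := \langle M^i\rangle_T = \int_0^T\|h(\mu_t,X_t^i)\|^2\,\dd t$, and by independence
\[
\mb E_{\barP}\bigl[e^{\lambda(Y_f-Y_g)}\bigr] \;=\; \bigl(\mb E_{\barP}\bigl[e^{(\lambda^2/(2N))Q^1}\bigr]\bigr)^N.
\]

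Next, observe that $Q^1\in[0,T\|h\|_\infty^2]$ pointwise while $\mb E_{\barP}[Q^1]=\|h\|_E^2$ because $X_t^1\sim\mu_t$ under $\barP$. Exploiting convexity of $x\mapsto e^{ax}$ on $[0,T\|h\|_\infty^2]$ yields the Bennett-type bound
\[
\mb E_{\barP}[e^{aQ^1}] \;\leq\; 1 + \frac{\|h\|_E^2}{T\|h\|_\infty^2}\bigl(e^{aT\|h\|_\infty^2}-1\bigr).
\]
Setting $a=\lambda^2/(2N)$, $y := \lambda^2T\|h\|_\infty^2/(2N)$, and combining with $(1+x)^N\leq e^{Nx}$, I obtain $\mb E_{\barP}[e^{\lambda(Y_f-Y_g)}]\leq \exp\{(N\|h\|_E^2/(T\|h\|_\infty^2))(e^y-1)\}$. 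To match the form in the statement I would invoke the scalar inequality $e^y-1\leq y/(1-\sqrt{y})$, valid on $[0,1)$, which follows from the chain $e^y-1\leq y/(1-y)\leq y/(1-\sqrt{y})$: the first step is a termwise comparison of $\sum_{p\geq1}y^p/p!$ with the geometric series $\sum_{p\geq 1}y^p$, and the second uses $\sqrt{y}\geq y$ on $[0,1]$. Since $\sqrt{y}=\sqrt{T/(2N)}\,\lambda\|h\|_\infty$, this produces precisely the claimed MGF bound whenever $\sqrt{T/(2N)}\,\lambda\|h\|_\infty<1$.

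The Bernstein tail is then extracted by Chernoff with the classical sub-exponential calibration: take $\lambda = u/(\|h\|_E^2+\sqrt{T/(2N)}\,\|h\|_\infty\,u)$, which automatically lies in the admissible range $\sqrt{T/(2N)}\,\lambda\|h\|_\infty<1$. A direct calculation gives $1-\sqrt{T/(2N)}\,\lambda\|h\|_\infty = \|h\|_E^2/(\|h\|_E^2+\sqrt{T/(2N)}\,\|h\|_\infty u)$, which collapses the Chernoff exponent to $-u^2/(2(\|h\|_E^2+\sqrt{T/(2N)}\,\|h\|_\infty u))$; applying the same argument to $-(Y_f-Y_g)$ and taking a union bound yields the two-sided inequality with the prefactor $2$. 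The main (and fairly modest) obstacle is the Bennett-to-Bernstein reduction in the middle paragraph: one must verify the elementary but non-obvious scalar inequality $e^y-1\leq y/(1-\sqrt{y})$ and carefully track the admissibility $y<1$ through both the MGF derivation and the Chernoff optimization. Verifying Novikov's condition, the i.i.d.~factorization, and the $\lambda$-optimization are all routine.
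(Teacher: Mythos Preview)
There is a genuine gap at the heart of your argument: the equality $\mb E_{\barP}[e^{\lambda M^i_T}] = \mb E_{\barP}[\exp((\lambda^2/2)Q^i)]$ does \emph{not} follow from the Dol\'eans--Dade exponential being a martingale. Novikov gives you $\mb E_{\barP}\bigl[e^{\lambda M^i_T-(\lambda^2/2)Q^i}\bigr]=1$, i.e.\ the expectation of a \emph{ratio} equals one, which in general says nothing about the two separate expectations. Equality would hold if $Q^i$ were deterministic (then $M^i_T$ is Gaussian) or independent of the driving noise, but here $Q^i=\int_0^T\|h(\mu_t,X_t^i)\|^2\,\dd t$ depends on the path $X^i$ and hence on $\overline{W}^i$. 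Concretely, $\mb E_{\barP}[e^{\lambda M^i_T}]=\mb E_{\barP}[\mathcal E(\lambda M^i)_T\, e^{(\lambda^2/2)Q^i}]=\tilde{\mb E}[e^{(\lambda^2/2)Q^i}]$ under the Girsanov-tilted measure $\tilde P$, and the law of $Q^i$ changes under that tilt because the drift of $X^i$ shifts by $\lambda h$.

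The paper fixes this by inserting a Cauchy--Schwarz step: write $e^{\lambda M_T}=e^{\lambda M_T-\lambda^2 Q}\cdot e^{\lambda^2 Q}$ and bound by $(\mb E_{\barP}[e^{2\lambda M_T-2\lambda^2 Q}])^{1/2}(\mb E_{\barP}[e^{2\lambda^2 Q}])^{1/2}$; the first factor is an honest exponential martingale at parameter $2\lambda$, so it equals $1$. This replaces your $a=\lambda^2/(2N)$ by $a=2\lambda^2/N$ and the outer power $N$ by $N/2$. After that correction, your Bennett bound via convexity of $x\mapsto e^{ax}$ on $[0,T\|h\|_\infty^2]$ and the reduction $e^y-1\le y/(1-y)\le y/(1-\sqrt{y})$ go through cleanly and give the right sub-exponential form, just with $\sqrt{2T/N}$ in place of $\sqrt{T/(2N)}$ and without the extra factor $1/2$ in the numerator. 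This alternative to the paper's Taylor-series bound $\mb E_{\barP}[Q^k]\le (T\|h\|_\infty^2)^{k-1}\|h\|_E^2$ is perfectly legitimate and arguably more transparent; only the martingale step needs repair.
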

\begin{proof}
Note that
\begin{align*}
    &\quad\,\mb E_{\barP}e^{\lambda(Y_f - Y_g)}\\ 
    &= \bigg(\mb E_{\barP}\exp\bigg\{\frac{\lambda}{\sqrt{N}}\int_0^T\big\langle(f-g)(\mu_t, X_t),\,\dd\overline{W}_t\big\rangle\bigg\}\bigg)^N\\
    &\stackrel{\ri}{\leq} \bigg(\mb E_{\barP}\exp\bigg\{\frac{2\lambda}{\sqrt{N}}\int_0^T\big\langle(f-g)(\mu_t, X_t),\,\dd\overline{W}_t\big\rangle - \frac{2\lambda^2}{N}\int_0^T\norm{(f-g)(\mu_t, X_t)}^2\,\dd t\bigg\}\bigg)^{\frac{N}{2}}\\
    &\qquad\cdot\bigg(\mb E_{\barP}\exp\bigg\{\frac{2\lambda^2}{N}\int_0^T\norm{(f-g)(\mu_t, X_t)}^2\,\dd t\bigg\}\bigg)^{\frac{N}{2}}\\
    &\stackrel{\rii}{=} \bigg(\mb E_{\barP}\exp\bigg\{\frac{2\lambda^2}{N}\int_0^T\norm{(f-g)(\mu_t,X_t)}^2\,\dd t\bigg\}\bigg)^{\frac{N}{2}}.
\end{align*}
Here, (i) is by Cauchy--Schwartz's inequality, and (ii) is by the fact that
\begin{align*}
    \exp\bigg\{\frac{2\lambda}{\sqrt{N}}\int_0^s\big\langle(f-g)(\mu_t, X_t),\,\dd\overline{W}_t\big\rangle - \frac{1}{2}\Big(\frac{2\lambda}{\sqrt{N}}\Big)^2\int_0^s\norm{(f-g)(\mu_t,X_t)}^2\,\dd t\bigg\},\quad 0\leq s\leq T
\end{align*}
is an exponential martingale under $\barP$. Now, by Taylor's formula
\begin{align*}
    &\quad\,\mb E_{\barP}\exp\bigg\{\frac{2\lambda^2}{N}\int_0^T\norm{(f-g)(\mu_t, X_t)}^2\,\dd t\bigg\}\\
    &= 1 + \sum_{k=1}^\infty\frac{1}{k!}\Big(\frac{2\lambda^2}{N}\Big)^k\mb E_{\barP}\bigg(\int_0^T\norm{(f-g)(\mu_t, X_t)}^2\,\dd t\bigg)^k\\
    &\leq 1 + \sum_{k=1}^\infty\Big(\frac{2\lambda^2}{N}\Big)^k\big(T\norm{f-g}_\infty^2\big)^{k-1}\norm{f-g}_E^2\\
    &= 1 + \frac{2\lambda^2\norm{f-g}_E^2/N}{1 - 2T\lambda^2\norm{f-g}_\infty^2}\\
    &\leq \exp\bigg\{\frac{2\lambda^2\norm{f-g}_E^2/N}{1-2T\lambda^2\norm{f-g}_\infty^2/N}\bigg\}\\
    &\leq\exp\bigg\{\frac{2\lambda^2\norm{f-g}_E^2/N}{1-\sqrt{2T/N}\lambda\norm{f-g}_\infty}\bigg\}
\end{align*}
for $N$ large enough. So
\begin{align*}
    \mb E_{\barP}e^{\lambda(Y_f - Y_g)} \leq \exp\bigg\{\frac{\lambda^2\norm{f-g}_E^2/2}{1 - \sqrt{T/2N}\lambda\norm{f-g}_\infty}\bigg\}.
\end{align*}
Then the Bernstein's bound holds by Proposition 2.10 in \cite{wainwright2019high}.
\end{proof}

\begin{lemma}[estimation of $\psi_1$-norm]\label{lem: psi1_chaining}
\begin{align*}
    \bigg|\!\bigg|\sup_{g\in\m H^\ast_u}|Y_g|\bigg|\!\bigg|_{\psi_1} \lesssim \sqrt{\frac{TB^2}{2N}}\int_0^\frac{1}{2}\log\big(1 + H(\varepsilon, \m H^\ast_u)\big)\,\dd \varepsilon + u\int_0^\frac{1}{2}\sqrt{\log\big(1 + H(\varepsilon,\m H^\ast_u)\big)}\,\dd \varepsilon.
\end{align*}
\end{lemma}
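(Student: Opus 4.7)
The plan is to apply a two-scale generic chaining argument, exploiting the fact that the increments $Y_f-Y_g$ are sub-exponential with a Bernstein-type bound whose variance proxy is controlled by $\|f-g\|_E$ and whose scale is controlled by $\|f-g\|_\infty$, precisely the pair of metrics that the joint covering number $H(\varepsilon,\m H^\ast_u)$ controls simultaneously.

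First I would invoke Lemma \ref{lem: sub_exp_increments} to record that for any $f,g \in \m H^\ast_u$ the increment $Y_f-Y_g$ is sub-exponential with parameters $\nu = \|f-g\|_E$ and $\alpha = \sqrt{T/(2N)}\,\|f-g\|_\infty$. The standard equivalence between Bernstein tails and the Orlicz norm then yields $\|Y_f-Y_g\|_{\psi_1} \lesssim \|f-g\|_E + \sqrt{T/(2N)}\,\|f-g\|_\infty$. Moreover, for a collection of $M$ such sub-exponential variables with common parameters $(\nu,\alpha)$, the maximal inequality $\|\max_{i\le M} Z_i\|_{\psi_1} \lesssim \nu\sqrt{\log(1+M)} + \alpha\,\log(1+M)$ holds (this follows by integrating the Bernstein tail, e.g.\ Lemma 2.2.10 in \cite{wellner2013weak}).

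Next I would set up the chain. For each integer $k \geq 0$, invoke the definition of $H(2^{-k},\m H^\ast_u)$ to obtain a set $S_k \subset \m H^\ast_u$ of cardinality at most $H(2^{-k},\m H^\ast_u)$ such that every $g \in \m H^\ast_u$ admits a projection $\pi_k(g) \in S_k$ with $\|g-\pi_k(g)\|_E \leq 2^{-k}u$ and $\|g-\pi_k(g)\|_\infty \leq 2^{-k}B$ \emph{simultaneously}. Since $\m H^\ast$ is star-shaped around $0$, I take $S_0 = \{0\}$, so that $Y_{\pi_0(g)} = 0$. Writing the telescoping identity $Y_g = \sum_{k=1}^\infty \bigl(Y_{\pi_k(g)} - Y_{\pi_{k-1}(g)}\bigr)$ (with a standard continuity/pointwise-measurability argument via Assumption \ref{assump: pointwise_measurable} to justify that the sum actually converges to $Y_g$), I apply the triangle inequality for $\|\cdot\|_{\psi_1}$ to obtain
\begin{align*}
    \Bigl\|\sup_{g \in \m H^\ast_u}|Y_g|\Bigr\|_{\psi_1} \leq \sum_{k=1}^\infty \Bigl\|\sup_{g \in \m H^\ast_u}\bigl|Y_{\pi_k(g)} - Y_{\pi_{k-1}(g)}\bigr|\Bigr\|_{\psi_1}.
\end{align*}
At level $k$, the consecutive projections satisfy $\|\pi_k(g)-\pi_{k-1}(g)\|_E \leq 3\cdot 2^{-k} u$ and $\|\pi_k(g)-\pi_{k-1}(g)\|_\infty \leq 3\cdot 2^{-k} B$ by the triangle inequality, and the number of possible pairs is bounded by $H(2^{-k},\m H^\ast_u)^2$. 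The maximal inequality for sub-exponential variables then gives a bound of order $2^{-k}u\sqrt{\log(1+H(2^{-k},\m H^\ast_u))} + 2^{-k}B\sqrt{T/(2N)}\,\log(1+H(2^{-k},\m H^\ast_u))$ at each level.

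Finally, summing over $k \geq 1$ and comparing the geometric sum $\sum_k 2^{-k}\,\phi(2^{-k})$ with the integral $\int_0^{1/2}\phi(\varepsilon)\,\dd\varepsilon$ (for $\phi$ nondecreasing in $H(\varepsilon,\cdot)$) yields the two claimed entropy integrals with the right prefactors. The main obstacle is the bookkeeping for the two-scale (Bernstein) chaining: one must exploit that a \emph{single} net simultaneously discretizes in $\|\cdot\|_E$ and $\|\cdot\|_\infty$ — which is exactly why $H(\varepsilon,\m H^\ast_u)$ was defined using the joint covering condition rather than $N(\varepsilon,\m H^\ast_u,\|\cdot\|_E)$ or $N(\varepsilon,\m H^\ast_u,\|\cdot\|_\infty)$ separately — so that the same cardinality controls both the sub-Gaussian and the sub-exponential contributions at each level of the chain.
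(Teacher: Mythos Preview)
Your proposal is correct and follows essentially the same route as the paper: both set up a dyadic chain with $S_0=\{0\}$ using the joint covering sets realizing $H(2^{-k},\m H^\ast_u)$, telescope $Y_g$ along the chain, apply a Bernstein-type maximal inequality for finitely many sub-exponential increments (the paper cites Lemma~8.3 in \cite{kosorok2008introduction}, you cite the equivalent Lemma~2.2.10 in \cite{wellner2013weak}), and convert the resulting geometric sum into the two entropy integrals. The only cosmetic difference is that the paper first reduces to a finite class and then extends, whereas you invoke pointwise measurability directly.
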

\begin{proof}
We only need to consider the case where $\m H^\ast_u$ is a finite set, and the whole proof can be extended to a separable $\m H^\ast_u$ ($\m H^\ast$ is separable indeed) by a standard argument. Let $S_k\subset \m H^\ast_u$ such that $|S_k| = H(2^{-k}, \m H^\ast_u)$ for all integer $k\geq 0$. We specify $S_0 = \{0\}$ and $S_K = \m H^\ast_u$. Such $K$ exists since $\m H^\ast_u$ is finite. Let $\pi_k(g)$ be an element in $S_k$, such that
\begin{align*}
    \norm{g - \pi_k(g)}_E \leq 2^{-k}u,\qquad \norm{g - \pi_k(g)}_\infty \leq 2^{-k}B.
\end{align*}
For any $g\in\m H^\ast_u$ and $1\leq k\leq K$, let $g^K = g$ and $g^{k-1} = \pi_{k-1}(g^k)$. Then, we can decompose $\sup_{g\in\m H^\ast_u}Y_g$ by
\begin{align*}
    \sup_{g\in\m H^\ast_u} |Y_g| = \sup_{g\in\m H^\ast_u}|Y_g - Y_0| \leq \sum_{k=1}^K \sup_{g\in\m H^\ast_u} \big|Y_{g^k} - Y_{g^{k-1}}\big| \leq \sum_{k=1}^K\sup_{g\in S_k}\big|Y_g - Y_{\pi_{k-1}(g)}\big|.
\end{align*}
By Lemma 8.3 in \cite{kosorok2008introduction}, Lemma \ref{lem: sub_exp_increments}, and triangular inequality of $\psi_1$-norm, we can get
\begin{align*}
    \bigg|\!\bigg|\sup_{g\in\m H^\ast_u}|Y_g|\bigg|\!\bigg|_{\psi_1} &\leq \sum_{k=1}^K\bigg|\!\bigg|\sup_{g\in S_k}\big|Y_g - Y_{\pi_{k-1}(g)}\big|\bigg|\!\bigg|_{\psi_1}\\
    &\lesssim \sum_{k=1}^K\bigg[\sqrt{\frac{T}{2N}}2^{-k}B\cdot\log\big(1 + H(2^{-k}, \m H^\ast_u)\big) + 2^{-k}u\cdot\sqrt{\log\big(1 + H(2^{-k}, \m H^\ast_u)\big)}\bigg]\\
    &\lesssim \sqrt{\frac{TB^2}{2N}}\int_0^\frac{1}{2}\log\big(1 + H(\varepsilon, \m H^\ast_u)\big)\,\dd \varepsilon + u\int_0^\frac{1}{2}\sqrt{\log\big(1 + H(\varepsilon,\m H^\ast_u)\big)}\,\dd \varepsilon.
\end{align*}
\end{proof}

\section{Proof of Applications}
\begin{proof}[proof of Corollary~\ref{coro: holder_smooth}]
First, let us calculate the order of $r_N$. Notice that if $\tilde{f}$, $\tilde{g}\in\tilde{\m H}^\ast$, s.t. $\big|\!\big|\tilde{f} - \tilde{g}\big|\!\big|_\infty < \varepsilon$, we have $\norm{f-g}_\infty < \varepsilon$. This implies $\log N(\varepsilon, \m H^\ast, \norm{\cdot}_\infty) \leq 2\log N(\varepsilon/2, \tilde{\m H^\ast}, \norm{\cdot}_\infty)$. Therefore
\begin{align*}
    J_5(r_N) \lesssim \int_0^{\frac{r_N}{2}} \sqrt{\varepsilon^{-\frac{d}{\alpha}}}\,\dd\varepsilon \lesssim r_N^{1-\frac{d}{2\alpha}}.
\end{align*}
By letting $r_N^{1-\frac{d}{2\alpha}} \lesssim \sqrt{N}r_N^2$, we get $r_N \lesssim N^{-\frac{\alpha}{d+2\alpha}}$. Next, we shall calculate the order of $\delta_N$. In fact,
\begin{align*}
    &\quad\,\mb E_{\barP}\sup_{g\in\m H^\ast_u}\bigg|\frac{1}{N}\sum_{i=1}^N\int_0^T\big\langle g(\mu_t, X_t^i),\,\dd\overline{W}_t^i\big\rangle\bigg| \\
    &\lesssim \norm{\sup_{g\in\m H^\ast_u}\bigg|\frac{1}{N}\sum_{i=1}^N\int_0^T\big\langle g(\mu_t, X_t^i),\,\dd\overline{W}_t^i\big\rangle\bigg|}_{\psi_1}\\
    &\lesssim \sqrt{\frac{TB^2}{2N^2}}\int_0^\frac{1}{2}\log\big(1 + H(\varepsilon, \m H^\ast_u)\big)\,\dd \varepsilon + \frac{u}{\sqrt{N}}\int_0^\frac{1}{2}\sqrt{\log\big(1 + H(\varepsilon,\m H^\ast_u)\big)}\,\dd \varepsilon.
\end{align*}
The last inequality is by Lemma \ref{lem: psi1_chaining}. Note that 
\begin{align*}
    \norm{f-g}_E^2 = \int_0^T\!\!\int_{\mb T^d}\norm{(f-g)(\mu_t, x)}^2\,\dd \mu_t(x)\dd t \leq T\big|\!\big|\tilde{f}-\tilde{g}\big|\!\big|_\infty^2,
\end{align*}
and
\begin{align*}
    \norm{f-g}_\infty \leq \big|\!\big|\tilde{f} - \tilde{g}\big|\!\big|_\infty.
\end{align*}
Therefore $\log H(\varepsilon, \m H^\ast_u) \leq 2\log N\big((2\sqrt{T}+2)^{-1}\varepsilon u, \tilde{\m H}^\ast, \norm{\cdot}_\infty\big) \lesssim (\varepsilon u)^{-d/\alpha}$. So we know
\begin{align*}
    \mb E_{\barP}\sup_{g\in\m H^\ast_u}\bigg|\frac{1}{N}\sum_{i=1}^N\int_0^T\big\langle g(\mu_t, X_t^i),\,\dd\overline{W}_t^i\big\rangle\bigg|
    &\lesssim \frac{1}{N}\int_0^{\frac{1}{2}}(\varepsilon u)^{-\frac{d}{\alpha}}\,\dd\varepsilon + \frac{u}{\sqrt{N}}\int_0^{\frac{1}{2}}(\varepsilon u)^{-\frac{d}{2\alpha}}\,\dd\varepsilon\\
    &\lesssim \frac{u^{-\frac{d}{\alpha}}}{N} + \frac{u^{1-\frac{d}{2\alpha}}}{\sqrt{N}}.
\end{align*}
By Letting
\begin{align*}
    \frac{\delta_N^{-\frac{d}{\alpha}}}{N} + \frac{\delta_N^{1-\frac{d}{2\alpha}}}{\sqrt{N}} \lesssim \delta_N^2
\end{align*}
we get $\delta_N\lesssim N^{-\frac{\alpha}{d+2\alpha}}$. By Theorem \ref{thm: converge_of_b}, we know $\big|\!\big|\wht b_N - b^\ast\big|\!\big|_E \lesssim N^{-\frac{\alpha}{d+2\alpha}}$ with probability $1 - C\exp\big\{-C'(\frac{\log N}{\log\log N})^{2/3}\big\}$ for some positive constant $C$ and $C'$ that may depends on $T$.
\end{proof}

\begin{lemma}\label{lem: stability}
Under Assumption \ref{assump: non-stationary} for all $C > 0$,
\begin{align*}
    \norm{\wht F_N - F^\ast}_2 \leq \frac{\norm{\m L(\wht b_N - b^\ast)(\mu, \cdot)}_2}{\inf_{0 < \norm{k}< C} \|(\m L\mu)_k\|} + \frac{\|\wht F_N - F^\ast\|_{H^1}}{2\pi C}.
\end{align*}
\end{lemma}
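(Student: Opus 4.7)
The plan is to exploit the frequency-domain deconvolution identity~\eqref{eqn: deconvolution} together with Parseval's theorem, splitting $\wht F_N - F^\ast$ into a low-frequency block $\{0<\|k\|<C\}$ and a high-frequency tail $\{\|k\|\geq C\}$ and controlling each by a different mechanism.

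For the low-frequency block, I would start from the relation $\bigl(\m L[(\wht b_N-b^\ast)(\mu,\cdot)]\bigr)_k = (\wht F_N-F^\ast)_k\cdot(\m L\mu)_k$ and invert it as $\|(\wht F_N-F^\ast)_k\| = \|(\m L[(\wht b_N-b^\ast)(\mu,\cdot)])_k\|/\|(\m L\mu)_k\|$ for each $0<\|k\|<C$. Pulling out the uniform lower bound $\inf_{0<\|k\|<C}\|(\m L\mu)_k\|$ and applying Parseval yields
\[
\sum_{0<\|k\|<C}\|(\wht F_N-F^\ast)_k\|^2 \leq \frac{\|\m L[(\wht b_N-b^\ast)(\mu,\cdot)]\|_2^2}{\inf_{0<\|k\|<C}\|(\m L\mu)_k\|^2}.
\]
The $k=0$ mode can be ignored since the identifiability constraints $\int \wht F_N = \int F^\ast = 0$ give $(\wht F_N-F^\ast)_0=0$, so no term is lost.

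For the high-frequency tail, I would use that Fourier differentiation on $\mb T^d$ multiplies coefficients by $2\pi ik_j$, whence $\|\nabla f_i\|_2^2 = 4\pi^2\sum_k \|k\|^2|(f_i)_k|^2$ and therefore $\|f\|_{H^1}^2 \geq 4\pi^2\sum_k\|k\|^2\|(f)_k\|^2$. Restricting to $\|k\|\geq C$ and dividing by $(2\pi C)^2$ gives
\[
\sum_{\|k\|\geq C}\|(\wht F_N-F^\ast)_k\|^2 \leq \frac{\|\wht F_N - F^\ast\|_{H^1}^2}{4\pi^2 C^2}.
\]

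Adding the two bounds, invoking Parseval once more to identify the full sum with $\|\wht F_N-F^\ast\|_2^2$, taking square roots, and using $\sqrt{a^2+b^2}\leq a+b$ for $a,b\geq 0$ produces the stated estimate. There is no essential obstacle here; the only point requiring care is verifying that all nonzero modes of $\wht F_N-F^\ast$ are indeed recovered by the deconvolution identity (which is why the infimum in the denominator only runs over $k\neq 0$, and why the zero-integral normalization on $F$ and $F^\ast$ is used to dispose of the $k=0$ mode).
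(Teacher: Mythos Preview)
Your proposal is correct and follows essentially the same approach as the paper's proof: both arguments use Plancherel to split $\|\wht F_N-F^\ast\|_2^2$ into the low-frequency block $\{0<\|k\|<C\}$ (handled via the deconvolution identity~\eqref{eqn: deconvolution} and the uniform lower bound on $\|(\m L\mu)_k\|$) and the high-frequency tail $\{\|k\|\geq C\}$ (controlled by the $H^1$-norm through the Fourier multiplier identity for derivatives), with the $k=0$ mode disposed of by the zero-mean constraint on $F$. The only cosmetic difference is that you make the final $\sqrt{a^2+b^2}\leq a+b$ step explicit, whereas the paper simply says ``combining these pieces together yields the result.''
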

\begin{proof}[Proof of Lemma \ref{lem: stability}]
By Plancherel's identity (Proposition 3.1.16 in \cite{grafakos2008classical})
\begin{align*}
    \norm{\wht F_N - F^\ast}_2^2 &= \sum_{ k\in\mb Z^d} \norm{(\wht F_N - F^\ast)_k}^2\\
    &= \sum_{0 < \norm{k} < C}\norm{(\wht F_N - F^\ast)_k}^2 + \sum_{\norm{k} \geq C}\norm{(\wht F_N - F^\ast)_k}^2.
\end{align*}
The last equality is by the restriction $\int_{\mb T^d}\wht F_N(x)\,\dd x = \int_{\mb T^d}F^\ast (x)\,\dd x = 0$, which implies that $(\wht F_N - F^\ast)_0 = 0$. By~\eqref{eqn: deconvolution} and Assumption \ref{assump: non-stationary} that $(\m L\mu)_k \neq 0$ for all $k\neq 0$,
\begin{align*}
    \sum_{0 < \norm{k} < C}\norm{(\wht F_N - F^\ast)_k}^2 &= \sum_{0< \norm{k} < C} \norm{\frac{\big(\m L(\wht b_N - b^\ast)(\cdot, \mu)\big)_k}{(\m L\mu)_k}}^2\\
    &\leq \inf_{0<\norm{k}<C}|(\m L\mu)_k|^{-2}\cdot \sum_{k\in\mb Z^d}\norm{\big(\m L(\wht b_N - b^\ast)(\cdot, \mu)\big)_k}^2\\
    &= \inf_{0<\norm{k}<C}|(\m L\mu)_k|^{-2}\cdot\norm{\m L(\wht b_N - b^\ast)(\cdot, \mu)}_2^2.
\end{align*}
For the second term, recall that for any function $f: \mb T^d\to\mb R^d$ and $k\in\mb Z^d$
\begin{align*}
    \sum_{l=1}^d \norm{(\nabla f_l)_k}^2 &= \sum_{l,j=1}^d \Big|\Big(\frac{\partial f_l}{\partial x_j}\Big)_k\Big|^2 = \sum_{l,j=1}^d\big|2\pi ik_j(f_l)_k\big|^2\\
    &= 4\pi^2\sum_{l=1}^d \big|(f_l)_k\big|^2 \sum_{j=1}^dk_j^2\\
    &=4\pi^2\norm{k}^2\norm{(f)_k}^2.
\end{align*}
Take $f = \wht F_N - F^\ast$, and we get
\begin{align*}
    \sum_{\norm{k} \geq C}\norm{(\wht F_N - F^\ast)_k}^2 &\leq \frac{1}{(2\pi C)^{2}}\cdot \sum_{\norm{k}>C} (2\pi)^{2} |k|^{2} \norm{(\wht F_N - F^\ast)_k}^2\\
    &= \frac{1}{(2\pi C)^{2}}\sum_{\norm{k}>C} \sum_{l=1}^d\norm{\big(D^j(\wht F_N - F^\ast)\big)_k}^2\\
    &\leq \frac{1}{(2\pi C)^{2}}\sum_{l=1}^d \sum_{k\in\mb Z^d}\norm{\big(\nabla(\wht F_N - F^\ast)_l\big)_k}^2\\
    &= \frac{1}{(2\pi C)^{2}}\sum_{l=1}^d\norm{\nabla(\wht F_N - F^\ast)_l}_2^2\\
    &\leq (2\pi C)^{-2}\norm{\wht F_N - F^\ast}_{H^1}^2.
\end{align*}
Combining these pieces together yields the result.
\end{proof}

\begin{proof}[Proof of Corollary \ref{cor: rate_of_interaction}]
We can assume $\mu_0(x) > 0$ for all $x$. Otherwise, we can choose $w(t)$ and $\rho(t)$ satisfying $\int_s^Tw(t)\,\dd\rho(t) = 0$ for some time $s > 0$, and apply the operator $\m L$ with new $w$ and $\rho$.
\begin{align*}
    \big|\!\big|\m L(\wht b_N - b^\ast)(\mu, \cdot)\big|\!\big|_2^2
    &= \int_{\mb T^d}\bigg|\!\bigg| \int_0^T w(t) (\wht b_N - b^\ast)(\mu_t, x)\,\dd t\bigg|\!\bigg|^2\,\dd x\\
    &\leq \int_{\mb T^d}\!\int_0^T w^2(t)\big|\!\big|(\wht b_N - b^\ast)(\mu_t, x)\big|\!\big|^2\,\dd t\,\dd x\\
    &\lesssim \int_0^T\!\!\int_{\mb T^d}\big|\!\big|(\wht b_N - b^\ast)(\mu_t, x)\big|\!\big|^2\,\dd x\,\dd t\\
    &\lesssim \int_0^T\!\!\int_{\mb T^d}\big|\!\big|(\wht b_N - b^\ast)(\mu_t, x)\big|\!\big|^2\,\dd\mu_t(x)\,\dd t.
\end{align*}
The last inequality is by the fact that $\mu_t(x)$ is bounded away from zero, since $[0, T]\times \mb T^d$ is compact. Then by Lemma \ref{lem: stability}, with high probability we have
\begin{align*}
    \big|\!\big|\wht F_N - F^\ast\big|\!\big|_2 \leq \frac{\delta_N + r_N + \log N/ N}{\inf_{0<\norm{k}<C}\norm{(\m L\mu)_k}} + \frac{1}{\pi C}\sup_{F\in \tilde{\m H}}\norm{F}_{H^1}.
\end{align*}
By taking $C = \eta_N$, we finish the proof.
\end{proof}
\end{document}